\documentclass[11pt,reqno,twoside]{article}
\usepackage{amsthm}

\usepackage[hang]{footmisc}
\usepackage{lipsum}

\usepackage{cmap} 

\usepackage[T1]{fontenc}
\usepackage[utf8]{inputenc}
\usepackage{graphicx}
\usepackage{array}
\usepackage{placeins}
\usepackage{enumerate}
\usepackage{booktabs}
\usepackage{algcompatible}
\usepackage[most]{tcolorbox}
\usepackage{mdframed}

\usepackage{verbatim}
\newcommand{\comments}[1]{}

\usepackage{soul}

\usepackage{setspace}

\let\counterwithin\relax  
\usepackage{lmodern} 

\usepackage{comment}

\usepackage{bm} 

\usepackage{bbold}

\usepackage{amsmath,amsbsy,amsgen,amscd,amsthm,amsfonts,amssymb}

\usepackage{float}       

\usepackage[centering,top=1.1in,bottom=1.4in,left=1in,right=1in]{geometry}

\usepackage[sf,bf,compact]{titlesec}
\usepackage[mathscr]{euscript}

\usepackage{enumitem}
\usepackage[dvipsnames]{xcolor}

\definecolor{dark-gray}{gray}{0.3}
\definecolor{dkgray}{rgb}{.4,.4,.4}
\definecolor{dkblue}{rgb}{0,0,.5}
\definecolor{medblue}{rgb}{0,0,.75}
\definecolor{rust}{rgb}{0.5,0.1,0.1}

\usepackage{url}
\usepackage[colorlinks=true]{hyperref}
\hypersetup{linkcolor=dkblue}    
\hypersetup{citecolor=rust}      
\hypersetup{urlcolor=rust}     

\usepackage[final]{microtype} 

\newtheoremstyle{myThm} 
    {\topsep}                    
    {\topsep}                    
    {\itshape}                   
    {}                           
    {\sffamily\bfseries}                   
    {.}                          
    {.5em}                       
    {}  

\newtheoremstyle{myRem} 
    {\topsep}                    
    {\topsep}                    
    {}                   
    {}                           
    {\sffamily}                   
    {.}                          
    {.5em}                       
    {}  

\newtheoremstyle{myDef} 
    {\topsep}                    
    {\topsep}                    
    {}                   
    {}                           
    {\sffamily\bfseries}                   
    {.}                          
    {.5em}                       
    {}  

\theoremstyle{myThm}
\newtheorem{theorem}{Theorem}[section]
\newtheorem{lemma}[theorem]{Lemma}
\newtheorem{proposition}[theorem]{Proposition}

\newtheorem{definition}[theorem]{Definition}

  \newtheorem{examplex}[theorem]{Example}
 \newenvironment{example}
  {\pushQED{\qed}\examplex}
  {\popQED\endexamplex}

\theoremstyle{myRem}
\newtheorem{remarkx}[theorem]{Remark}
 \newenvironment{remark}
  {\pushQED{\qed}\remarkx}
  {\popQED\endremarkx}
\usepackage{fancyhdr}
\let\originalleft\left
\let\originalright\right
\renewcommand{\left}{\mathopen{}\mathclose\bgroup\originalleft}
\renewcommand{\right}{\aftergroup\egroup\originalright}

\usepackage{mathtools}
\mathtoolsset{centercolon}  

\newcommand{\circnum}[1]{\raisebox{.5pt}{\textcircled{\raisebox{-.2pt}{\scriptsize #1}}}}

\providecommand{\mathbbm}{\mathbb} 

\newcommand{\R}{\mathbbm{R}}
\newcommand{\E}{\mathbbm{E}}

\newcommand{\mcN}{\mathcal{N}}

\newcommand{\tr}{\mathrm{Tr}}   

\newcommand{\supp}{\mathrm{supp}}
\newcommand{\Cov}{\mathrm{Cov}}

\definecolor{mygreen}{rgb}{0.13,0.55,0.13}

\usepackage[font = small, margin=30pt]{caption}

\usepackage[]{algorithm}
\usepackage{enumerate}

\usepackage{graphicx}

\usepackage{authblk}
\usepackage{chngcntr}
\usepackage{mathrsfs} 
\counterwithin{table}{section}

\usepackage[normalem]{ulem}

\usepackage{tikz}
\usetikzlibrary{shapes,arrows,arrows.meta, decorations.pathmorphing,backgrounds,positioning,fit,petri}
\usetikzlibrary{calc}
\usetikzlibrary{matrix}
\usetikzlibrary{backgrounds}
\usetikzlibrary{shapes.geometric}
\usetikzlibrary{patterns}
\usepackage{pgfplots}
\pgfplotsset{compat=newest}
\usepgfplotslibrary{groupplots}
\pgfdeclarelayer{background}
\pgfdeclarelayer{bBack}
\pgfsetlayers{bBack,background,main}
\usetikzlibrary{fit}

\usepackage{subcaption}

\title{Sharp Frobenius-Norm Concentration for Sample Moment Tensors}

\author{Jiaheng Chen and Daniel Sanz-Alonso}

\date{University of Chicago}

\vspace{.5in}

\makeatletter\@addtoreset{section}{part}\makeatother%
\numberwithin{equation}{section}

\newcommand{\upperRomannumeral}[1]{\uppercase\expandafter{\romannumeral#1}}

\makeatletter\@addtoreset{section}{part}\makeatother%
\numberwithin{equation}{section}

\begin{document}
\maketitle 


\renewcommand{\thefootnote}{\fnsymbol{footnote}}


\vspace{-1.7em}
\abstract{
 This paper establishes sharp dimension-free Frobenius-norm concentration inequalities for sample moment tensors. Our bounds are optimal over the sub-Gaussian class, while for Gaussian data we obtain matching two-sided estimates. We also identify a parity effect: the intermediate Gaussian and sub-Gaussian scales coincide at odd tensor orders but differ at even orders. Our second main result, which supplies the weak-moment estimate behind these bounds, proves a  dimension-free moment bound for
 Hilbert-valued  polynomials under Gaussian convex domination. The proof of this bound combines a recently developed structured martingale coupling theorem with new estimates for moment tensors of uniformly log-concave distributions.  
}

\bigskip


\section{Introduction}

Moment tensors are fundamental objects in probability and statistics. This
paper establishes sharp dimension-free Frobenius-norm concentration
inequalities for their empirical counterparts. Let
\(X_1,\ldots,X_N\) be independent copies of a centered random vector
\(X\in\mathbb R^d\) with covariance matrix \(\Sigma\). For an integer
\(p\ge2\), we study, in the sub-Gaussian and Gaussian settings, the
deviation
\[
\bigg\|
\frac1N\sum_{i=1}^N X_i^{\otimes p}
-
\E X^{\otimes p}
\bigg\|_{\mathsf F},
\]
where \(X^{\otimes p}:=X\otimes\cdots\otimes X\) denotes the \(p\)-fold
tensor product of \(X\), and \(\|\cdot\|_{\mathsf F}\) denotes the
Frobenius norm.

Our first main result, Theorem~\ref{thm:main1}, determines the sharp
non-asymptotic dependence of this deviation on the sample size, the
confidence level, and intrinsic covariance quantities. Our upper bound is
sharp over the class of sub-Gaussian distributions, while for Gaussian data we obtain
matching two-sided estimates for every covariance matrix $\Sigma$. The results also
identify a quantitative parity effect: the Gaussian and sub-Gaussian
intermediate covariance scales coincide at odd tensor orders, whereas at
even orders the sub-Gaussian scale can exceed the Gaussian one by a factor
as large as \(\sqrt{\operatorname{rank}(\Sigma)}\).

The main technical difficulty is the weak-moment estimate for the centered
tensor power of a sub-Gaussian random vector, a homogeneous polynomial in
that vector. Recent advances in the theory of sub-Gaussian distributions
\cite{vanhandel2025subgaussian} show that sub-Gaussian random vectors are
dominated in convex order by Gaussian ones, but convex domination does not
directly control the moments of such polynomials. Our second main result,
Theorem~\ref{thm:main2}, overcomes this obstruction by establishing a
dimension-free moment bound for Hilbert-valued polynomials under Gaussian
convex domination. Its proof uses a
structured martingale coupling recently developed in
\cite{hua2026talagrand} to expand the target polynomial into a conditional
Gaussian term plus lower-degree correction terms; the corrections are
controlled by new moment-tensor bounds for uniformly log-concave
distributions, and the argument closes by induction on the degree of the
polynomial.

\paragraph{Notation}\label{sec:notation}

We write \(a\lesssim_p b\) if \(a\le C_p b\) for a constant
\(C_p>0\) that depends only on \(p\), and write
\(a\asymp_p b\) if both \(a\lesssim_p b\) and
\(b\lesssim_p a\). We omit the subscript when the implicit constants
are universal. We also write \(a\lor b:=\max\{a,b\}\) and
\(a\land b:=\min\{a,b\}\). For a vector \(x\in\mathbb R^d\), we denote its Euclidean norm by
\(\|x\|_2\). For order-\(p\) tensors
\(A,B\in\mathbb R^{d_1\times\cdots\times d_p}\), we define the
Frobenius inner product and norm by $\langle A,B\rangle_{\mathsf F}
:=
\sum_{i_1,\ldots,i_p}
A_{i_1,\ldots,i_p}B_{i_1,\ldots,i_p}$ and $
\|A\|_{\mathsf F}
:=
\langle A,A\rangle_{\mathsf F}^{1/2}$.
The operator norm of \(A\) is defined by $\|A\|
:=
\sup_{\|u_1\|_2,\ldots,\|u_p\|_2\le1}
\left|
\left\langle
A,u_1\otimes\cdots\otimes u_p
\right\rangle_{\mathsf F}
\right|$. When \(p=2\), these norms coincide with the usual Frobenius and
operator norms of a matrix.

\section{Main results}\label{sec:main}

Here we state and discuss the two main theorems of the paper. Subsection~\ref{subsec:main1} presents Theorem~\ref{thm:main1} on the concentration of sample moment tensors, while Subsection~\ref{subsec:main2} presents Theorem~\ref{thm:main2}, a polynomial moment bound under Gaussian convex domination. The latter is a key ingredient in the proof of the former. The two results are proved in Sections~\ref{sec:proof1} and~\ref{sec:proof2}, respectively.

\subsection{Concentration of sample moment tensors}\label{subsec:main1}

The Orlicz \(\psi_2\) norm and the
\(L_q\) norm of a real-valued random variable \(Z\) are defined by  
\[
\|Z\|_{\psi_2}
:=
\inf\left\{
t>0:\mathbb{E}\exp(Z^2/t^2)\le 2
\right\},
\qquad
\|Z\|_{L_q} := \big(\mathbb{E}|Z|^q\big)^{1/q}.
\]
We say that a random vector \(X \in \R^d\)  is sub-Gaussian if there exists \(K>0\) such
that
\begin{align}\label{eq:def-subg}
\|\langle X,v\rangle\|_{\psi_2}
\le K \|\langle X,v\rangle\|_{L_2},
\qquad
\text{for any } v\in \mathbb{S}^{d-1},
\end{align}
where $\mathbb{S}^{d-1}$ denotes the unit sphere in $\R^d.$ 
The smallest such constant $K>0$ is called the sub-Gaussian constant of $X$. We use $\mcN(0,\Sigma)$ to denote the Gaussian distribution with mean zero and covariance matrix $\Sigma$. Throughout the paper, all covariance matrices are allowed to be singular, but are assumed to be nonzero. 

Our first main result establishes sharp Frobenius-norm concentration bounds for sample moment tensors of sub-Gaussian and Gaussian random vectors.

\begin{theorem}[Concentration of sample moment tensors]\label{thm:main1}
Let \(X_1,\ldots,X_N\) be i.i.d. copies of a centered sub-Gaussian random
vector \(X\in\mathbb R^d\) with covariance matrix \(\Sigma\) and
sub-Gaussian constant \(K\). Let \(p\ge 2\) be an integer. Then, for every
\(q\ge1\),
\[
\left(
\E
\bigg\|
\frac1N\sum_{i=1}^N X_i^{\otimes p}
-
\E X^{\otimes p}
\bigg\|_{\mathsf F}^q
\right)^{1/q}
\lesssim_p
K^p
\left(
\frac{\tr(\Sigma)^{p/2}}{\sqrt N}
+
\rho_p(\Sigma)\sqrt{\frac{q\land N}{N}}
+
\|\Sigma\|^{p/2}\frac{q^{p/2}}{N}
\right),
\]
where
\[
\rho_p(\Sigma)
:=
\begin{cases}
\|\Sigma\|_{\mathsf F}^{p/2},
& p \text{ even},\\[3pt]
\|\Sigma\|_{\mathsf F}^{(p-1)/2}\|\Sigma\|^{1/2},
& p \text{ odd}.
\end{cases}
\]
Moreover, if $X$ is Gaussian, then
\[
\left(\E \bigg\|
\frac{1}{N}
\sum_{i=1}^N
X_i^{\otimes p}-\E X^{\otimes p}
\bigg\|_{\mathsf{F}}^q\right)^{1/q}
\asymp_p
\frac{\tr(\Sigma)^{p/2}}{\sqrt{N}}
+
\rho_{p,G}(\Sigma)\sqrt{\frac{q}{N}}
+
\|\Sigma\|^{p/2}\frac{q^{p/2}}{N},
\]
where
\[
\rho_{p,G}(\Sigma)
:=
\begin{cases}
\|\Sigma\|_{\mathsf{F}}^{p/2-1}\|\Sigma\|, & p \text{ even},\\[3pt]
\|\Sigma\|_{\mathsf{F}}^{(p-1)/2}\|\Sigma\|^{1/2}, & p \text{ odd}.
\end{cases}
\]
\end{theorem}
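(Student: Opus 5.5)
We view $S:=\frac1N\sum_{i=1}^N Y_i$, with $Y_i:=X_i^{\otimes p}-\E X^{\otimes p}$, as a normalized sum of i.i.d. centered random vectors in the Hilbert space $H=(\R^d)^{\otimes p}$ with the Frobenius inner product. We then apply a sharp moment inequality for sums of independent Hilbert-valued vectors, namely the Hilbert-space Rosenthal/Pinelis (Lata{\l}a-type) bound
\[
\Big\|\Big\|\sum_i Y_i\Big\|_{\mathsf F}\Big\|_{L_q}\lesssim \E\Big\|\sum_i Y_i\Big\|_{\mathsf F}+\sqrt q\,\sigma+\Big\|\max_i\|Y_i\|_{\mathsf F}\Big\|_{L_q},
\qquad \sigma^2:=\sup_{\|A\|_{\mathsf F}\le1}\sum_i\E\langle Y_i,A\rangle_{\mathsf F}^2 .
\]
The three resulting terms should match the three terms of the theorem. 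We treat the strong term, the weak term, and the maximal term in turn, and finish with the Gaussian lower bound.

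\emph{Strong and maximal terms.} For the strong term we use
\[
\E\|S\|_{\mathsf F}\le N^{-1/2}\big(\E\|X\|_2^{2p}\big)^{1/2}\lesssim_p K^p\,\tr(\Sigma)^{p/2}/\sqrt N .
\]
The second inequality holds because $\|X\|_2$ has sub-Gaussian concentration around $\tr(\Sigma)^{1/2}$ at scale $K\|\Sigma\|^{1/2}$ (Hanson--Wright type). For the maximal term, $\|Y_i\|_{\mathsf F}\lesssim\|X_i\|_2^p+\E\|X\|_2^p$. Hence
\[
\Big\|\max_i\|Y_i\|_{\mathsf F}\Big\|_{L_q}\lesssim_p K^p\Big(\tr(\Sigma)^{p/2}+\|\Sigma\|^{p/2}(q+\log N)^{p/2}\Big).
\]
Only a $1/N$ factor then appears in front, which does not directly give the stated bound. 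To obtain it, we instead use a truncation/symmetrization split, or a version of the inequality in which the large-deviation part contributes $\|\Sigma\|^{p/2}q^{p/2}/N$. The $\tr(\Sigma)^{p/2}/N$ piece is dominated by $\tr(\Sigma)^{p/2}/\sqrt N$, and the $\log N$ is absorbed for $q\ge \log N$, with small $q$ handled by Jensen's inequality.

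\emph{Weak term (main obstacle).} We need
\[
\sup_{\|A\|_{\mathsf F}\le 1}\big\|\langle X^{\otimes p}-\E X^{\otimes p},A\rangle_{\mathsf F}\big\|_{L_r}\lesssim_p K^p\Big(\rho_p(\Sigma)\sqrt r+\|\Sigma\|^{p/2}r^{p/2}\Big).
\]
The $\sqrt{q\land N}$ then comes from applying this in a Bernstein/Lata{\l}a-type sum estimate: the $L_2$ part gives $\rho_p\sqrt{q/N}$, the higher-order tails give $\|\Sigma\|^{p/2}q^{p/2}/N$ when $q\ge N$, and the cap $q\land N$ follows because for $q\ge N$ the second term dominates the first. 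The quantity $f(X)=\langle X^{\otimes p},A\rangle_{\mathsf F}$ is a degree-$p$ polynomial in a sub-Gaussian vector, and this is exactly where Theorem~\ref{thm:main2} enters. By \cite{vanhandel2025subgaussian}, $X$ is convexly dominated by $\mathcal N(0,CK^2\Sigma)$, and Theorem~\ref{thm:main2} transfers moment bounds for Hilbert-valued polynomials to this Gaussian. For a Gaussian $G$, the $L_r$ norm of the chaos $f(G)-\E f(G)$ follows from hypercontractivity and Lata{\l}a's Gaussian chaos bounds, with one contribution for each chaos level $k\le p$ of size roughly $r^{k/2}$ times norms of contractions of $A$ with $\Sigma$.

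The Gaussian parity effect shows up here. For even $p$, the degree-$0$ contraction is removed by centering. The level-$1$ term vanishes by symmetry, so the $\sqrt r$-level term comes from the degree-$2$ chaos, whose variance involves $\|\Sigma\|_{\mathsf F}^{p/2-1}\|\Sigma\|$. In the sub-Gaussian case, the lack of centering symmetry, via Theorem~\ref{thm:main2}'s lower-degree corrections, allows a coefficient $\|\Sigma\|_{\mathsf F}^{p/2}$. We verify the intermediate levels by interpolating $\|\Sigma\|_{\mathsf F}^{a}\|\Sigma\|^{b}r^{k/2}$ between the endpoints $\rho_p\sqrt r$ and $\|\Sigma\|^{p/2}r^{p/2}$, using $\|\Sigma\|\le\|\Sigma\|_{\mathsf F}$ and AM--GM.

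\emph{Gaussian lower bound.} We show each term is necessary separately. The first term follows from
\[
\E\|S\|_{\mathsf F}^2=N^{-1}\operatorname{Var}\text{-trace}\gtrsim_p \tr(\Sigma)^p/N ,
\]
with a Paley--Zygmund or chaos reverse-hypercontractivity argument to pass from second to first moments. The second term follows by testing against a specific $A$ (a contraction aligned with the top eigenvector and $\Sigma$ itself) and using the CLT-type lower bound for $L_q$ norms of sums of i.i.d. chaoses. The third term follows by testing $A=u^{\otimes p}$ with $u$ a top eigenvector, which reduces to a one-dimensional $\frac1N\sum_i(g_i^p-\E g^p)$ whose $L_q$ norm is $\gtrsim q^{p/2}/N$ for large $q$.
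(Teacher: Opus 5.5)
Your weak-moment step matches the paper's (van Handel's convex domination plus Theorem~\ref{thm:main2}). The way you assemble the sum, however, has two genuine gaps.

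\emph{The master inequality and the maximal term.} The Hilbert-space inequality you display, with $\|\max_i\|Y_i\|_{\mathsf F}\|_{L_q}$ carrying no factor of $q$, is false already for real-valued sums. For centered Bernoulli$(1/N)$ summands, $\sum_i Y_i$ is close to a centered Poisson$(1)$ variable, whose $L_q$-norm is of order $q/\log q$, while your right-hand side is of order $\sqrt q$. The correct general versions (Talagrand, Einmahl--Li, Hoffmann-J{\o}rgensen/Gin\'e--Lata{\l}a--Zinn) put a factor of order $q$ or $q/\log q$ on the maximal term. Since every $\|Y_i\|_{\mathsf F}$ is typically of size $\tr(\Sigma)^{p/2}$, this produces a term of order at least $q\,\tr(\Sigma)^{p/2}/(N\log q)$. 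That term is not in the target: for $\Sigma=I_d$ with $d\gg N^2$ and $q=N$, the theorem gives $\asymp d^{p/2}/\sqrt N$, while this term is $\gtrsim d^{p/2}/\log N$. The ``truncation/symmetrization split'' you invoke to repair this is precisely the missing idea, and it is not supplied. The paper avoids any maximal term. Lemma~\ref{lem:Hilbert-moment-reduction} reduces $\|S\|_{L_q}$, two-sidedly and with universal constants, to the square function $\|(\sum_i\|Y_i\|_{\mathsf F}^2)^{1/2}\|_{L_q}$ plus weak moments in $L_{q/2}$. Lata{\l}a's formula for i.i.d.\ nonnegative sums, applied to $W_i=\|Y_i\|_{\mathsf F}^2$ with $\|W_1\|_{L_s}\lesssim_p K^{2p}(\tr(\Sigma)^p+s^p\|\Sigma\|^p)$, then shows that the typical-size part $\tr(\Sigma)^p$ contributes only $N\tr(\Sigma)^p$, while deviations occur at scale $\|\Sigma\|^pq^p$.

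\emph{The cap $q\land N$ at $p=2$.} In your framework the weak term is $\sqrt q\,\sigma/N=\sqrt{q/N}\,\sup_A\|\langle A,Y\rangle_{\mathsf F}\|_{L_2}$, which is uncapped. Your claim that for $q\ge N$ the large-deviation term dominates it is false. For $p=2$ it would require $\sqrt{q/N}\ge\|\Sigma\|_{\mathsf F}/\|\Sigma\|$. Concretely, take $\Sigma=I_d$, $d=N^{3/2}$ and $q=N^2$. Then $\|\Sigma\|_{\mathsf F}\sqrt{q/N}=N^{5/4}$, while all three terms of the theorem are $O(N)$. The same failure occurs with your stated weak-moment target $\rho_p(\Sigma)\sqrt r+\|\Sigma\|^{p/2}r^{p/2}$: fed into any Bernstein- or Lata{\l}a-type sum bound, it yields $\rho_p(\Sigma)\sqrt{q/N}$ for every $q$. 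What is needed at even orders is the sharper single-term bound $\|\Sigma\|_{\mathsf F}^{p/2}+\|\Sigma\|^{p/2}r^{p/2}$. Its intermediate part does not grow with $r$, and this is exactly what $\Psi_p(q,\Sigma)$ in Theorem~\ref{thm:main2} delivers. This bound must then be combined with Lata{\l}a's two-sided formula for centered i.i.d.\ sums, whose supremum for $s$-independent $L_s$-norms is $\asymp\sqrt{N(q\land N)}$; this is the mechanism visible in the Rademacher-radius example. For $p\ge3$ the uncapped term can instead be absorbed, using $\rho_p(\Sigma)^2\le\tr(\Sigma)^{p/2}\|\Sigma\|^{p/2}$ and AM--GM as the paper does for odd $p$, but this absorption fails at $p=2$.
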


We make a few remarks on Theorem~\ref{thm:main1}.

\begin{remark}\leavevmode
\begin{enumerate}
\item
The first upper bound in Theorem~\ref{thm:main1} is sharp over the class
of sub-Gaussian distributions. More precisely, in
Example~\ref{example:subgaussian_tensor_sharpness} we construct a centered
random vector \(X\) with covariance \(\Sigma\) and sub-Gaussian constant
\(K\asymp1\) such that, for every integer \(p\ge2\), every \(q\ge1\), and
i.i.d. copies \(X_1,\ldots,X_N\) of \(X\),
\[
\left(
\E \bigg\|
\frac1N\sum_{i=1}^N X_i^{\otimes p}-\E X^{\otimes p}
\bigg\|_{\mathsf F}^q
\right)^{1/q}
\gtrsim_p
\frac{\tr(\Sigma)^{p/2}}{\sqrt N}
+
\rho_p(\Sigma)\sqrt{\frac{q\land N}{N}}
+
\|\Sigma\|^{p/2}\frac{q^{p/2}}{N},
\]
where \(\rho_p(\Sigma)\) is defined in
Theorem~\ref{thm:main1}. Hence all three terms in the upper bound are
necessary, up to constants depending only on \(p\).

\item 
It is instructive to compare the sub-Gaussian and Gaussian bounds. The trace
and large-deviation scales coincide in the two settings. When \(p\) is odd,
the intermediate scales also coincide:
\[
\rho_p(\Sigma)
=
\rho_{p,G}(\Sigma)
=
\|\Sigma\|_{\mathsf F}^{(p-1)/2}\|\Sigma\|^{1/2}.
\]
By contrast, when \(p\) is even,
\[
\rho_p(\Sigma)=\|\Sigma\|_{\mathsf F}^{p/2},
\qquad
\rho_{p,G}(\Sigma)
=
\|\Sigma\|_{\mathsf F}^{p/2-1}\|\Sigma\|.
\]
Thus, at even orders, passing from the Gaussian distribution to the full
sub-Gaussian class may enlarge the covariance-dependent coefficient in the intermediate scale by the factor
$
\|\Sigma\|_{\mathsf F} /\|\Sigma\|,
$
which can be as large as
\(\sqrt{\operatorname{rank}(\Sigma)}\). This enlargement is intrinsic, as illustrated by
Example~\ref{example:subgaussian_tensor_sharpness}; the parity effect is discussed further in Subsection \ref{subsec:sharpness}. 

\item Taking \(q=u\) in Theorem~\ref{thm:main1} and applying Markov's inequality
yields that, for every \(u\ge1\), with probability at least \(1-e^{-u}\),
\[
\bigg\|
\frac1N\sum_{i=1}^N X_i^{\otimes p}-\E X^{\otimes p}
\bigg\|_{\mathsf F}
\lesssim_p
K^p
\left(
\frac{\tr(\Sigma)^{p/2}}{\sqrt N}
+
\rho_p(\Sigma)\sqrt{\frac{u\land N}{N}}
+
\|\Sigma\|^{p/2}\frac{u^{p/2}}N
\right).
\]
If \(X\) is Gaussian,  with probability at least \(1-e^{-u}\),
\[
\bigg\|
\frac1N\sum_{i=1}^N X_i^{\otimes p}-\E X^{\otimes p}
\bigg\|_{\mathsf F}
\lesssim_p
\frac{\tr(\Sigma)^{p/2}}{\sqrt N}
+
\rho_{p,G}(\Sigma)\sqrt{\frac{u}{N}}
+
\|\Sigma\|^{p/2}\frac{u^{p/2}}{N}.
\]
\end{enumerate}
\end{remark}

\paragraph{Related work}
\emph{The covariance case \(p=2\).}
For \(p=2\), the problem reduces to sample covariance concentration. In operator norm, \cite[Theorems~4 and~9]{koltchinskii2017concentration} established sharp
dimension-free bounds governed by the effective rank
$r(\Sigma):=\tr(\Sigma)/\|\Sigma\|$ using empirical-process methods. For the corresponding Gaussian bound, see also
\cite[Section~5]{van2017structured} for an elementary alternative proof based
on decoupling and comparison inequalities. The optimal
leading constants were obtained in
\cite[Theorem~2.3]{han2022exact} via the Gaussian min--max theorem. A general-rank analogue for
empirical averages of positive semi-definite random matrices was obtained in
\cite[Theorem~1]{zhivotovskiy2024dimension} using PAC-Bayesian methods. Extensions to sample
cross-covariance concentration were obtained in
\cite{chen2026concentration}.

For simplicity, throughout the following Frobenius-norm comparison, we
assume that the sub-Gaussian constant \(K\) in \eqref{eq:def-subg} is
bounded by a universal constant, since our focus is on the dependence on
\(N\), \(u\), and intrinsic covariance quantities. Under a condition equivalent
to \eqref{eq:def-subg}, Bunea and Xiao
\cite[Proposition~A.3]{bunea2015sample} proved that, for \(u\ge1\), with
probability at least \(1-e^{-u}\),
\[
\|\widehat\Sigma_N-\Sigma\|_{\mathsf F}
\lesssim
 \tr(\Sigma)
\left(
\sqrt{\frac{u}{N}}
+
\frac{u}{N}
\right),
\qquad
\widehat\Sigma_N
:=
\frac{1}{N}\sum_{i=1}^{N}X_i\otimes X_i.
\]
In particular,
\(\E\|\widehat\Sigma_N-\Sigma\|_{\mathsf F}
\lesssim\tr(\Sigma)/\sqrt{N}\). Their deviation bound, however, uses \(\tr(\Sigma)\) in both the
moderate- and large-deviation regimes. More recently, Puchkin, Noskov, and Spokoiny
\cite[Theorems~2.4 and~2.6]{puchkin2025sharper} studied the finer problem of concentration of the squared Frobenius error around
its expectation. Their upper-tail result assumes a uniform fourth-moment
bound for whitened quadratic forms under quadratic exponential tilts;
see \cite[Assumption~2.1]{puchkin2025sharper}. This condition is not implied, with dimension-free constants, by the
sub-Gaussianity condition \eqref{eq:def-subg}; see
\cite[Proposition~2.3]{puchkin2025sharper}. Treating the regularity
parameters in \cite[Assumption~2.1]{puchkin2025sharper} as fixed, their upper- and lower-tail bounds
imply that, for every \(u\ge1\), with probability at least \(1-e^{-u}\),
\[
\left|
\|\widehat\Sigma_N-\Sigma\|_{\mathsf F}^{2}
-
\E\|\widehat\Sigma_N-\Sigma\|_{\mathsf F}^{2}
\right|
\lesssim
\frac{
\|\Sigma\|_{\mathsf F}^{2}\sqrt{u}
+
\|\Sigma\|^{2}u
}{N},
\]
provided that $N
\gtrsim
r(\Sigma)^6+u^2r(\Sigma)^2$.
Thus, at fixed confidence, the centered squared loss is controlled at
the scale \(\|\Sigma\|_{\mathsf F}^{2}/N\), which can be much smaller
than the typical squared-error scale \(\tr(\Sigma)^2/N\). The results of
\cite{puchkin2025sharper} therefore provide finer concentration of the
centered loss on a subclass of sub-Gaussian distributions.

Compared with \cite{bunea2015sample} and
\cite{puchkin2025sharper}, our \(p=2\) result in
Theorem~\ref{thm:main1} characterizes, up to constant factors, the sharp
\(L_q\)-behavior of the Frobenius error over the full sub-Gaussian class
satisfying \eqref{eq:def-subg}, for all \(N\) and \(q\), without any
sample-size restriction. In particular, for every \(u\ge1\), with
probability at least \(1-e^{-u}\),
\[
\|\widehat\Sigma_N-\Sigma\|_{\mathsf F}
\lesssim
\frac{\tr(\Sigma)}{\sqrt N}
+
\|\Sigma\|_{\mathsf F}
\sqrt{\frac{u\land N}{N}}
+
\|\Sigma\|\frac{u}{N}.
\]
The matching lower bounds are provided by
Example~\ref{example:subgaussian_tensor_sharpness}.

\emph{Higher-order moment tensors \(p>2\).}
Previous work on higher-order sample moment tensors has focused primarily
on the operator norm. Sharp dimension-free concentration
bounds for sub-Gaussian sample moment tensors were obtained in
\cite{al2025sharp}, with a simpler proof subsequently given in
\cite{abdalla2026dimension}. The asymmetric and cross-moment tensor settings were
treated in \cite{chen2026sharp}, with a possible logarithmic factor in
certain effective-rank regimes. The present work instead develops sharp dimension-free Frobenius-norm concentration bounds for higher-order sample moment tensors. More precisely, for every order \(p\ge2\), we obtain sharp \(L_q\)-bounds,
and hence high-probability bounds, prove their optimality over the
sub-Gaussian class through a family of examples, derive matching two-sided
Gaussian estimates, and identify a parity-dependent gap between the
sub-Gaussian and Gaussian settings.

\smallskip

\paragraph{Proof idea of Theorem~\ref{thm:main1}} The proof of Theorem~\ref{thm:main1} is given in
Section~\ref{sec:proof1}. We briefly describe its main steps here to explain
the role of the polynomial moment bound developed in the next subsection.
For \(1\le i\le N\), set
\(Z_i:=X_i^{\otimes p}-\E X^{\otimes p}\).
For \(q\ge2\), Lemma~\ref{lem:Hilbert-moment-reduction} gives
\begin{align*}
\left(
\E
\bigg\|
\frac1N\sum_{i=1}^N Z_i
\bigg\|_{\mathsf F}^q
\right)^{1/q}
\asymp{}&
\frac1N
\left\|
\bigg(
\sum_{i=1}^N\|Z_i\|_{\mathsf F}^2
\bigg)^{1/2}
\right\|_{L_q}
+
\frac1N
\sup_{\|A\|_{\mathsf F}\le1}
\left\|
\sum_{i=1}^N
\langle A,Z_i\rangle_{\mathsf F}
\right\|_{L_{q/2}}.
\end{align*}
This reduces the strong moments of the tensor-valued sum to a
square-function moment and the weak moments of its scalar projections. The
square-function term is controlled using moment bounds for \(\|X\|_2\) and
Lata{\l}a's estimate for sums of nonnegative random variables
\cite{latala1997estimation}. For the weak-moment term, Lata{\l}a's estimate
for centered sums further reduces the problem to estimating the weak moments
of a single centered tensor power:
\begin{equation}
\label{eq:weak-moment-target}
\sup_{\|A\|_{\mathsf F}\le1}
\left\|
\left\langle
A,X^{\otimes p}-\E X^{\otimes p}
\right\rangle_{\mathsf F}
\right\|_{L_q},\qquad q\ge 1.
\end{equation}
Obtaining the required bound for a general sub-Gaussian vector \(X\) is the
main challenge. This is precisely where Theorem~\ref{thm:main2}, the
polynomial moment bound under Gaussian convex domination developed in the
next subsection, enters the argument.

\subsection{Polynomial moments under Gaussian convex domination}
\label{subsec:main2}

The discussion in the previous subsection reduces the proof of
Theorem~\ref{thm:main1} to controlling the weak moment in \eqref{eq:weak-moment-target}. For \(A\in(\mathbb R^d)^{\otimes p}\), define 
\begin{align}\label{eq:T-A}
T_A(x_1,\ldots,x_p)
:=
\left\langle
A,x_1\otimes\cdots\otimes x_p
\right\rangle_{\mathsf F}.
\end{align}
Since
\(\langle A,X^{\otimes p}\rangle_{\mathsf F}
=T_A(X,\ldots,X)\),
it suffices to establish a dimension-free moment bound
for the polynomial \(T_A(X,\ldots,X)\) when \(X\)
is sub-Gaussian. To obtain sharp bounds, we build on recent developments in the theory of
sub-Gaussian random vectors
\cite{vanhandel2025subgaussian,hua2026talagrand}.

Let \(U\) and \(V\) be random vectors in \(\mathbb R^d\). We say that
\(U\) is dominated by \(V\) in convex order, and write
\(U\preceq_{\mathrm{cx}}V\), if $\E\varphi(U)\le\E\varphi(V)$ for every convex function
\(\varphi:\mathbb R^d\to\mathbb R\).
A recent result of van Handel
\cite[Theorem~1.1]{vanhandel2025subgaussian} shows that sub-Gaussian
random vectors are dominated in convex order by Gaussian random vectors.
More precisely, in the setting of \eqref{eq:def-subg}, 
\begin{align}\label{eq:convex-dominatation}
(CK)^{-1}X
\preceq_{\mathrm{cx}}
G_\Sigma,
\end{align}
where $C$ is a universal constant and
\(G_\Sigma\sim\mcN(0,\Sigma)\). This comparison does not directly give the required polynomial moment
bound. Indeed, when \(p\ge2\), the function $z\longmapsto
|T_A(z,\ldots,z)|^q$ is not convex in general. To tackle this nonconvex polynomial estimate for sub-Gaussian random
vectors, we develop a new polynomial moment bound under Gaussian convex
domination in Theorem~\ref{thm:main2}. 

We first
introduce some notation. For a \(p\)-linear map $T:(\mathbb R^d)^p\to\mathcal H$,
where \(\mathcal H\) is a finite-dimensional Hilbert space, we define
\[
\|T\|_{\mathsf F}^2
:=
\sum_{i_1,\ldots,i_p=1}^d
\big\|
T(e_{i_1},\ldots,e_{i_p})
\big\|_{\mathcal H}^2.
\]
Here \(e_1,\ldots,e_d\) is any orthonormal basis of \(\mathbb R^d\);
the definition does not depend on this choice.
We also set
\begin{align}\label{eq:Psi_p}
\Psi_p(q,\Sigma)
:=
\sum_{\substack{0\le j\le p\\j\equiv p\;(\mathrm{mod}\,2)}}
\|\Sigma\|_{\mathsf F}^{(p-j)/2}
\|\Sigma\|^{j/2}q^{j/2},\qquad \Psi_p^\circ(q,\Sigma)
:=
\sum_{\substack{1\le j\le p\\j\equiv p\;(\mathrm{mod}\,2)}}
\|\Sigma\|_{\mathsf F}^{(p-j)/2}
\|\Sigma\|^{j/2}q^{j/2}.
\end{align}
As will become clear later, the summand indexed by \(j\) corresponds to a
Gaussian chaos component of order \(j\); the two scales differ only in the
term \(j=0\), which is present only at even orders.

\begin{theorem}[Polynomial moments under Gaussian convex domination]
\label{thm:main2}
Let \(G_\Sigma\sim\mcN(0,\Sigma)\), where
\(\Sigma\succeq0\), and let \(Z\in\mathbb R^d\) be a centered random
vector such that \(Z\preceq_{\mathrm{cx}}G_\Sigma\). Then, for every
integer \(p\ge1\), every \(q\ge1\), every finite-dimensional Hilbert
space \(\mathcal H\), and every \(p\)-linear map
\[
T:(\mathbb R^d)^p\to\mathcal H,
\]
one has
\[
\left(
\E
\big\|
T(Z,\ldots,Z)
\big\|_{\mathcal H}^q
\right)^{1/q}
\lesssim_p
\Psi_p(q,\Sigma)\|T\|_{\mathsf F}.
\]
\end{theorem}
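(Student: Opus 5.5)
We prove Theorem~\ref{thm:main2} by induction on \(p\), using the structured martingale coupling of \cite{hua2026talagrand} to realise the convex domination \(Z\preceq_{\mathrm{cx}}G_\Sigma\) in a form where \(Z\) is a conditional expectation of a Gaussian-like object with uniformly log-concave conditional laws. First we reduce to \(\Sigma\) nonsingular (restrict to the range of \(\Sigma\), since \(Z\) is supported there by convex domination) and, by rescaling \(T\), work in whitened coordinates where needed. We also reduce to symmetric \(T\): replacing \(T\) by its symmetrisation does not change \(T(Z,\dots,Z)\) and does not increase \(\|T\|_{\mathsf F}\).

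\emph{Base case and Gaussian benchmark.} For \(p=1\), \(x\mapsto\|T(x)\|_{\mathcal H}^q\) is convex, so \(\E\|T(Z)\|^q\le \E\|T(G_\Sigma)\|^q\). The Gaussian Hilbert-valued linear form then gives \(\|\,\|T(G)\|\,\|_{L_q}\lesssim \|T\|_{\mathsf F}\|\Sigma\|^{1/2}q^{1/2}+(\E\|T(G)\|^2)^{1/2}\lesssim \|\Sigma\|^{1/2}\sqrt q\,\|T\|_{\mathsf F}\), which is \(\Psi_1\). For general \(p\) we record the Gaussian case \(Z=G_\Sigma\). Expanding \(T(G,\dots,G)\) into Wiener chaoses of orders \(j\equiv p\pmod 2\), the order-\(j\) component is a contraction of \(T\) with \((p-j)/2\) copies of \(\Sigma\). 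Its Frobenius norm is \(\lesssim \|\Sigma\|_{\mathsf F}^{(p-j)/2}\|T\|_{\mathsf F}\) after whitening, which contributes \(\|\Sigma\|^{j/2}\), and hypercontractivity (in its Hilbert-valued, Latała-type form bounding the \(L_q\) norm of a degree-\(j\) chaos by \(q^{j/2}\) times a Frobenius/operator mixture) gives exactly \(\Psi_p(q,\Sigma)\|T\|_{\mathsf F}\). The \(j=0\) term is the deterministic trace contraction, which is why it appears only at even \(p\).

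\emph{Inductive step via coupling.} We invoke the coupling from \cite{hua2026talagrand}. It provides a filtration and a decomposition \(Z=\E[W\mid\mathcal F]\), or a martingale \(Z=\int_0^1 \Gamma_t\,dB_t\), with \(0\preceq\Gamma_t^2\) summing to at most \(\Sigma\), such that conditional laws are uniformly log-concave with covariance dominated by a multiple of \(\Sigma\). Writing \(Z=Y+R\), with \(Y\) conditionally Gaussian with covariance \(\preceq\Sigma\) and \(R\) a correction, we expand multilinearly:
\[T(Z,\dots,Z)=T(Y,\dots,Y)+\sum_{k\ge1}\binom pk T(R^{\otimes k},Y^{\otimes(p-k)}).\]
The first term is handled conditionally by the Gaussian benchmark, using monotonicity of \(\Psi_p\) in \(\Sigma\) under \(\preceq\). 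For terms with \(k\ge1\), we condition on \(R\) (or on the past). The result is a polynomial of degree \(p-k<p\) in a vector dominated by \(G_\Sigma\), with a random multilinear map whose Frobenius norm is \(\|T(R^{\otimes k},\cdot)\|_{\mathsf F}\). The induction hypothesis yields \(\Psi_{p-k}(q,\Sigma)\,\|T(R^{\otimes k},\cdot)\|_{\mathsf F}\). It then remains to bound \(\|\,\|T(R^{\otimes k},\cdot)\|_{\mathsf F}\|_{L_q}\). This is where the moment-tensor estimates for uniformly log-concave measures enter. For a uniformly log-concave vector with covariance \(\preceq\Sigma\), we will bound \(\|\E(R-\E R)^{\otimes k}\|\)-type contractions and higher moments of \(\|T(R^{\otimes k},\cdot)\|_{\mathsf F}\) by \(\Psi_k\)-like quantities, via Brascamp--Lieb/Poincaré iterated on the gradient polynomial.

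\emph{Main obstacle.} The delicate point is the parity bookkeeping in the correction terms. Products \(\Psi_{p-k}\cdot(\text{bound for }R^{\otimes k})\) must not create a \(j=0\) term at odd \(p\), and must not produce \(\|\Sigma\|_{\mathsf F}^{a}\|\Sigma\|^{b}q^{b}\) with mismatched exponents. When \(k\) is odd, the moment tensor of the centered correction must supply a factor \(\|\Sigma\|^{1/2}\sqrt q\) rather than \(\|\Sigma\|_{\mathsf F}^{1/2}\). Establishing this requires the new estimates on odd-order moment tensors of uniformly log-concave laws, bounding them in operator norm by \(\|\Sigma\|^{k/2}\) up to \(\Sigma\)-contractions. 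I expect this estimate, together with checking that \(\Psi_a\Psi_b\lesssim\Psi_{a+b}\) whenever the parities align, to be the crux.
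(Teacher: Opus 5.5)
Your base case and Gaussian benchmark are sound and match the paper. The inductive step, however, has a genuine gap.

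\textbf{The splitting you assume is not available.} What \cite[Proposition~3.3]{hua2026talagrand} provides is a one-step martingale coupling \(\E[G_\Sigma\mid Z]=Z\). It first requires reducing to finitely supported \(Z\) with strict slack \(Z\preceq_{\mathrm{cx}}(1-\varepsilon)G_\Sigma\), a reduction you omit. In this coupling, \(R:=G_\Sigma-Z\) is, conditionally on \(Z\), centered and \(\Sigma\)-uniformly log-concave.

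It does not give a splitting \(Z=Y+R\) with \(Y\) conditionally Gaussian given \(R\). Suppose \(Y\) were conditionally Gaussian given \(R\) with nonzero covariance on an event of positive probability. Then the law of \(Z\) would have a non-atomic part, which is impossible for the finitely supported \(Z\) one reduces to. The natural candidate \(Y=G_\Sigma\) is Gaussian only unconditionally, and nothing controls its law given \(R\).

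\textbf{Consequences for your induction.} Your step ``conditionally on \(R\), \(T(R^{\otimes k},Y^{\otimes(p-k)})\) is a polynomial of degree \(p-k\) in a vector dominated by \(G_\Sigma\)'' is therefore unjustified. Moreover, you expand \(T(Z,\ldots,Z)\) itself without conditioning, so the term \(k=p\), namely \(T(R^{\otimes p})\), survives. It needs a separate full-degree \(L_q\) estimate for the correction, which the induction hypothesis does not supply. Poincar\'e or Brascamp--Lieb only control \(L_2\). So the induction on \(p\) does not close as written.

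\textbf{The missing idea.} Expand the Gaussian polynomial around \(Z\), then condition on \(Z\). Since \(G_\Sigma=Z+R\),
\[
T(Z,\ldots,Z)=\E\bigl[T(G_\Sigma,\ldots,G_\Sigma)\mid Z\bigr]-\sum_{r=2}^p\binom pr T\bigl[Z^{p-r},M_r(Z)\bigr],\qquad M_r(Z):=\E[R^{\otimes r}\mid Z].
\]
Each piece is then controlled as follows.
\begin{itemize}
\item The Gaussian term is handled by conditional Jensen.
\item The \(r=1\) term vanishes by the martingale property.
\item Each correction is a polynomial of degree \(p-r\le p-2\) in \(Z\) itself, with a coefficient tensor that is deterministic given \(Z\). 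Cauchy--Schwarz and the induction hypothesis for the Hilbert-valued map \(\mathcal S_rT\) finish the step.
\end{itemize}
The only log-concave input is the almost-sure Frobenius bound \(\|M_r(Z)\|_{\mathsf F}\lesssim_r\rho_r(\Sigma)\), obtained from Fathi's Stein kernel \(0\preceq\tau\preceq\Sigma\) \cite{fathi2019stein}. No \(L_q\) bounds for \(R\) and no operator-norm bounds on odd moment tensors are needed. The parity issue you single out as the crux reduces to \(\rho_r(\Sigma)\Psi_{p-r}(q,\Sigma)\le\Psi_p(q,\Sigma)\); for odd \(r\) the extra \(\sqrt q\) is free since \(q\ge1\).

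\textbf{Salvaging your own toolkit.} Your approach can also be repaired along these lines. Keep \(R\) random but condition on \(Z\). Prove \(\|S(R,\ldots,R)\|_{L_q}\lesssim_r\Psi_r(q,\Sigma)\|S\|_{\mathsf F}\) for the conditional law of \(R\), for example by log-Sobolev iterated on gradients, with the mean controlled by the Frobenius moment bound. Then use \(\Psi_r\Psi_{p-r}\lesssim_p\Psi_p\). What matters is that the lower-degree polynomials are in \(Z\), the conditioning variable, not in a Gaussian proxy.
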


In particular, by \eqref{eq:convex-dominatation},
\(Z:=(CK)^{-1}X\) satisfies
\(Z\preceq_{\mathrm{cx}}G_\Sigma\) if $X$ is sub-Gaussian with sub-Gaussian constant $K$. Applying
Theorem~\ref{thm:main2} with \(\mathcal H=\mathbb R\) and \(T=T_A\) in \eqref{eq:T-A},
and using
\(\|T_A\|_{\mathsf F}=\|A\|_{\mathsf F}\) together with
\(\|V-\E V\|_{L_q}\le2\|V\|_{L_q}\), yields
\begin{align*}
\sup_{\|A\|_{\mathsf F}\le1}
\left\|
\left\langle
A,X^{\otimes p}-\E X^{\otimes p}
\right\rangle_{\mathsf F}
\right\|_{L_q}
&\lesssim_p
K^p\Psi_p(q,\Sigma) \\
&\asymp_p
K^p
\begin{cases}
\|\Sigma\|_{\mathsf F}^{p/2}
+
\|\Sigma\|^{p/2}q^{p/2},
&p\text{ even},\\[4pt]
\|\Sigma\|_{\mathsf F}^{(p-1)/2}
\|\Sigma\|^{1/2}\sqrt q
+
\|\Sigma\|^{p/2}q^{p/2},
&p\text{ odd}.
\end{cases}
\end{align*}
The final comparison follows because the terms defining
\(\Psi_p(q,\Sigma)\) form a geometric progression. This is precisely the
sub-Gaussian weak tensor moment estimate stated in
Lemma~\ref{lem:subgaussian-directional-tensor-moments}, and the point
at which Theorem~\ref{thm:main2} enters the proof of
Theorem~\ref{thm:main1}. Although the application of Theorem~\ref{thm:main2} here is scalar-valued, our proof strategy directly yields the Hilbert-valued result. Indeed,
the argument proceeds by induction on \(p\), and the induction step requires
applying the induction hypothesis to multilinear maps taking values in
tensor-product Hilbert spaces.

\begin{remark}
\label{rem:main2-sharpness}
Theorem~\ref{thm:main2} is sharp for every \(\Sigma\succeq0\) and every
\(q\ge1\). Indeed, since
\(G_\Sigma\preceq_{\mathrm{cx}}G_\Sigma\), the choice
\(Z=G_\Sigma\) is admissible. Taking \(\mathcal H=\mathbb R\) and
\(T=T_A\) as in \eqref{eq:T-A}, the noncentered Gaussian estimate in
Lemma~\ref{lem:subgaussian-directional-tensor-moments} gives
\[
\sup_{\|A\|_{\mathsf F}=1}
\left\|
\left\langle A,G_\Sigma^{\otimes p}\right\rangle_{\mathsf F}
\right\|_{L_q}
\asymp_p
\Psi_p(q,\Sigma).
\]
Thus, the martingale coupling argument used in the proof of
Theorem~\ref{thm:main2}, outlined below and carried out in
Section~\ref{sec:proof2}, recovers the sharp Gaussian scale under convex
domination.
\end{remark}

\smallskip

\paragraph{Proof idea of Theorem~\ref{thm:main2}}
The proof of Theorem \ref{thm:main2} is given in Section~\ref{sec:proof2}. Here we provide a high-level overview.

 Our starting point  
is a structural strengthening of Strassen's martingale
coupling theorem obtained by Hua, Song, and Tudose in their recent
resolution of Talagrand's convexity conjecture
\cite{hua2026talagrand}. A classical theorem of Strassen
\cite{strassen1965existence} gives an equivalent characterization of convex domination:
\(U\preceq_{\mathrm{cx}}V\) if and only if \(U\) and \(V\) admit a
martingale coupling, that is, a coupling satisfying $\E[V\mid U]=U$. Strassen's theorem is an abstract existence result and gives no further
information about the conditional laws of the coupling. In contrast,
\cite[Proposition~3.3]{hua2026talagrand} states the
following stronger structural result. Let \(U\) be a centered, finitely supported random vector, and
let \(V\) be a centered random vector with an absolutely continuous law.
If $U\preceq_{\mathrm{cx}}(1-\varepsilon)V$ for some $\varepsilon\in (0,1)$, then \(U\) and \(V\) admit a martingale coupling such that, for every
\(u\in\supp(U)\), the conditional law of \(V\) given \(U=u\)
has a log-concave density with respect to the law of \(V\).

We apply this result in the setting of Theorem~\ref{thm:main2}. A finite-support approximation and rescaling reduce the proof to the case of a finitely supported random vector \(Z\) satisfying
\[
Z\preceq_{\mathrm{cx}}(1-\varepsilon)G_\Sigma
\qquad\text{for some }\varepsilon\in(0,1).
\]
Applying \cite[Proposition~3.3]{hua2026talagrand}, we obtain a martingale coupling of \(Z\) and \(G_\Sigma\) 
such that, for every \(z\in\supp(Z)\), the conditional law of \(G_\Sigma\) given \(Z=z\) has a log-concave density with respect to \(\mcN(0,\Sigma)\). Set \(R:=G_\Sigma-Z\). For
each integer \(r\ge1\), we define its conditional moment tensor by
\[
M_r(Z):=\E[R^{\otimes r}\mid Z].
\]

Since symmetrizing \(T\) does not change \(T(Z,\ldots,Z)\) and cannot
increase \(\|T\|_{\mathsf F}\), we may assume without loss of generality
that \(T\) is symmetric. For \(0\le r\le p\) and
\(B\in(\mathbb R^d)^{\otimes r}\), we write 
\[
T[Z^{p-r},B]
:=
\sum_{i_1,\ldots,i_r=1}^d
B_{i_1,\ldots,i_r}\,
T\bigl(
\underbrace{Z,\ldots,Z}_{p-r\text{ times}},
e_{i_1},\ldots,e_{i_r}
\bigr),
\]
where $(e_i)_{i=1}^d$ denote the standard basis vectors of $\R^d$. Then, using multilinearity and symmetry of \(T\),
and the fact that \(M_1(Z)=\E[R\mid Z]=0\), we obtain the expansion
\begin{align}\label{eq:proof2-expansion}
T[Z^p]=T(Z,\ldots,Z)
=
\underbrace{
\E\bigl[T(G_\Sigma,\ldots,G_\Sigma)\mid Z\bigr]
}_{\text{Gaussian contribution}}
-
\underbrace{
\sum_{r=2}^p \binom{p}{r}
T[Z^{p-r},M_r(Z)]
}_{\text{lower-degree contributions}}.
\end{align}
Thus the degree-\(p\) polynomial \(T(Z,\ldots,Z)\) is expressed as a
conditional expectation of the corresponding Gaussian polynomial, minus
correction terms of degree at most \(p-2\) in \(Z\), whose coefficients
\(M_r(Z)\) are the conditional moment tensors of the remainder \(R\). This key identity is rigorously established in Equation \eqref{eq:Lq-conditional-expansion}.

The following proposition bounds polynomial moments of Gaussian vectors
and is used to control the Gaussian contribution in
\eqref{eq:proof2-expansion}. We prove it in
Subsection~\ref{subsec:Hilbert-Gaussian-tensor-Lq} using Gaussian chaos
decomposition and moment bounds for the individual chaos components. 

\begin{proposition}[Polynomial moments for Gaussian vectors]
\label{prop:Hilbert-Gaussian-tensor-Lq}
Let \(G_\Sigma\sim\mcN(0,\Sigma)\), where
\(\Sigma\succeq0\). Then, for every integer \(p\ge1\), every \(q\ge1\),
every finite-dimensional real Hilbert space \(\mathcal H\), and every
\(p\)-linear map
\[
T:(\mathbb R^d)^p\to\mathcal H,
\]
one has
\[
\left(
\E
\big\|
T(G_\Sigma,\ldots,G_\Sigma)
\big\|_{\mathcal H}^q
\right)^{1/q}
\lesssim_p
\Psi_p(q,\Sigma)\|T\|_{\mathsf F},
\]
and
\[
\left(
\E
\big\|
T(G_\Sigma,\ldots,G_\Sigma)
-
\E T(G_\Sigma,\ldots,G_\Sigma)
\big\|_{\mathcal H}^q
\right)^{1/q}
\lesssim_p
\Psi_p^\circ(q,\Sigma)\|T\|_{\mathsf F},
\]
where $\Psi_{p}(q,\Sigma)$ and $\Psi_p^\circ(q,\Sigma)$ are defined in \eqref{eq:Psi_p}.
\end{proposition}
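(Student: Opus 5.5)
We reduce to a standard Gaussian and then decompose into Wiener chaoses. Write \(G_\Sigma=\Sigma^{1/2}g\) with \(g\sim\mcN(0,I_d)\), and set \(S(y_1,\ldots,y_p):=T(\Sigma^{1/2}y_1,\ldots,\Sigma^{1/2}y_p)\). Then \(T(G_\Sigma,\ldots,G_\Sigma)=S(g,\ldots,g)\). Since symmetrizing \(T\) changes neither the polynomial nor increases \(\|T\|_{\mathsf F}\), we may take \(T\), and hence \(S\), symmetric.

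\textbf{Chaos decomposition.} By the Hermite (Wick) expansion of the tensor power,
\[
g^{\otimes p}=\sum_{k\le p/2} c_{p,k}\,\mathrm{Sym}\bigl(:\!g^{\otimes(p-2k)}\!:\otimes I^{\otimes k}\bigr).
\]
This gives
\[
S(g,\ldots,g)=\sum_{\substack{0\le j\le p\\ j\equiv p\,(2)}} c_{p,j}\, W_j,\qquad W_j:=I_j(S_j),
\]
where \(S_j\) is the \(\mathcal H\)-valued \(j\)-linear map obtained by contracting \((p-j)/2\) pairs of arguments of \(S\) against the identity. Pulled back to \(T\), this means contracting against \(\Sigma\). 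The \(j=0\) term is the mean \(\E T(G_\Sigma,\ldots,G_\Sigma)\), and it occurs only when \(p\) is even. So the second (centered) claim follows from the same chaos bounds with \(j=0\) omitted. The first claim then follows by adding the bound \(\|\E T(G_\Sigma,\ldots,G_\Sigma)\|\lesssim_p\|\Sigma\|_{\mathsf F}^{p/2}\|T\|_{\mathsf F}\) for the \(j=0\) term.

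\textbf{Step 1: coefficient bounds.} We will show
\[
\|S_j\|_{\mathsf F}\lesssim_p \|\Sigma\|_{\mathsf F}^{(p-j)/2}\|\Sigma\|^{j/2}\|T\|_{\mathsf F}.
\]
Each contraction of a pair of slots of \(T\) against \(\Sigma\) is a Frobenius-norm contraction, and the Cauchy--Schwarz inequality in the contracted indices yields one factor \(\|\Sigma\|_{\mathsf F}\) per contracted pair. For example, \(\|\sum_{a,b}\Sigma_{ab}T(\cdot,e_a,e_b)\|_{\mathsf F}\le\|\Sigma\|_{\mathsf F}\|T\|_{\mathsf F}\). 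The \(j\) remaining free slots each carry \(\Sigma^{1/2}\), and each of these costs at most \(\|\Sigma^{1/2}\|=\|\Sigma\|^{1/2}\). This gives the bound above, and the case \(j=0\) is exactly the mean bound.

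\textbf{Step 2: hypercontractivity.} Apply Gaussian hypercontractivity for Hilbert-valued chaoses: for \(q\ge2\),
\[
\|\,\|W_j\|_{\mathcal H}\|_{L_q}\le (q-1)^{j/2}\|\,\|W_j\|_{\mathcal H}\|_{L_2}=(q-1)^{j/2}\sqrt{j!}\,\|\mathrm{Sym}\,S_j\|_{\mathsf F}.
\]
Vector-valued hypercontractivity holds in Hilbert space, for instance by applying the Ornstein--Uhlenbeck semigroup coordinatewise and using Minkowski's inequality, or simply by noting that \(\|W_j\|_{\mathcal H}^2\) is a polynomial of degree \(2j\). The case \(q<2\) reduces to \(q=2\). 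Summing over \(j\) with the triangle inequality in \(L_q(\mathcal H)\) and inserting Step 1 gives
\[
\sum_{j\equiv p\,(2)}\|\Sigma\|_{\mathsf F}^{(p-j)/2}\|\Sigma\|^{j/2}q^{j/2}=\Psi_p(q,\Sigma),
\]
and \(\Psi_p^\circ(q,\Sigma)\) when the \(j=0\) term is removed.

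\textbf{Main obstacle.} The delicate point is Step 1: one must correctly identify which contractions carry \(\Sigma\) and check that Cauchy--Schwarz gives \(\|\Sigma\|_{\mathsf F}\), not \(\tr\Sigma\), per pair. This is where the Frobenius structure of \(\|T\|_{\mathsf F}\) is used, through a contraction inequality of the form \(\|\langle T,\Sigma\rangle_{\text{pair}}\|_{\mathsf F}\le\|\Sigma\|_{\mathsf F}\|T\|_{\mathsf F}\). Once that bookkeeping is in place, hypercontractivity finishes the proof.
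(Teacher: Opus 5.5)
Your proposal is correct and is essentially the paper's proof: a Wick/Hermite chaos decomposition, Cauchy--Schwarz giving one factor \(\|\Sigma\|_{\mathsf F}\) per contracted pair and \(\|\Sigma\|^{1/2}\) per free leg, and scalar hypercontractivity applied coordinatewise in an orthonormal basis of \(\mathcal H\) combined with the triangle inequality in \(L_{q/2}\). The only cosmetic difference is that you whiten via \(\Sigma^{1/2}\) and contract against the identity, whereas the paper diagonalizes \(\Sigma\) and works with the eigenvalue weights \(\lambda_k\) directly.
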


The next proposition bounds the Frobenius norm of the moment tensors of a
uniformly log-concave distribution; applied conditionally, it controls
\(M_r(Z)\) in \eqref{eq:proof2-expansion}. We prove it in Subsection~\ref{subsec:ULC} using Fathi's result on bounded Stein kernels for uniformly log-concave random vectors \cite{fathi2019stein}. 

\begin{proposition}[Moment tensors of uniformly log-concave distributions]
\label{prop:ULC-moment-tensors}
Let \(\Sigma\) be positive definite, and let \(Y\in\mathbb R^d\) be a
centered random vector with density \(c\exp(-V(y))\), where \(c>0\) is
the normalizing constant and \(V\in C^2(\mathbb R^d)\) satisfies
\(\nabla^2V(y)\succeq\Sigma^{-1}\) for every \(y\in\mathbb R^d\).
Then, for every integer \(p\ge2\),
\[
\left\|
\E Y^{\otimes p}
\right\|_{\mathsf F}
\lesssim_p
\rho_p(\Sigma),  \qquad
\rho_p(\Sigma)
:=
\begin{cases}
\|\Sigma\|_{\mathsf F}^{p/2},
& p \text{ even},\\[3pt]
\|\Sigma\|_{\mathsf F}^{(p-1)/2}\|\Sigma\|^{1/2},
& p \text{ odd}.
\end{cases}
\]
\end{proposition}

\begin{remark}
\label{rem:ULC-third-moment}
In a very recent work, Chen and Klartag determine the optimal universal constant in the
thin-shell inequality \cite{chen2026thinshell}.  Their Theorem 1.2  shows that every isotropic
log-concave random vector \(Y\in\mathbb R^d\) satisfies the sharp estimate
\[
\bigl\|\E Y^{\otimes 3}\bigr\|_{\mathsf F}^{2}
\le 4d.
\]
Equality is attained when the coordinates of $Y$ are independent standard, centered, exponential random variables. Thus, at order \(p=3\), a sharp estimate is available under mere
log-concavity in the isotropic setting. By contrast, under the stronger
assumption of uniform log-concavity,
Proposition~\ref{prop:ULC-moment-tensors} provides anisotropic bounds at
all orders needed for the structured martingale coupling argument in the
proof of Theorem~\ref{thm:main2}. It would be interesting to investigate
sharp dimension-free, covariance-dependent Frobenius-norm bounds for
higher-order moment tensors under mere log-concavity.
\end{remark}

We prove Theorem~\ref{thm:main2} by induction on the order \(p\). For
\(p=1\) the map \(z\mapsto\|T(z)\|_{\mathcal H}^q\) is convex, so the
estimate follows directly from convex domination and
Proposition~\ref{prop:Hilbert-Gaussian-tensor-Lq}. Let \(p\ge2\), and
suppose the estimate holds at every degree \(\le p-1\).

We start from the decomposition \eqref{eq:proof2-expansion}. By conditional
Jensen's inequality and Proposition~\ref{prop:Hilbert-Gaussian-tensor-Lq},
the Gaussian contribution satisfies
\begin{align}\label{eq:proof2-Gaussian}
\left(\E \bigl\|
\E\bigl[T(G_\Sigma,\ldots,G_\Sigma)\mid Z\bigr]
\bigr\|_{\mathcal H}^q\right)^{1/q}
\le
\left(\E \bigl\|T(G_\Sigma,\ldots,G_\Sigma)\bigr\|_{\mathcal H}^q\right)^{1/q}
\lesssim_p
\Psi_p(q,\Sigma)\|T\|_{\mathsf F}.
\end{align}

For the lower-degree contributions
\(T[Z^{p-r},M_r(Z)]\), we define the \((p-r)\)-linear map
\[
(\mathcal S_rT)(x_1,\ldots,x_{p-r})
:=
\sum_{i_1,\ldots,i_r=1}^d
T(x_1,\ldots,x_{p-r},e_{i_1},\ldots,e_{i_r})
\otimes e_{i_1}\otimes\cdots\otimes e_{i_r},
\]
with values in \(\mathcal H\otimes(\mathbb R^d)^{\otimes r}\). Then $T[Z^{p-r},M_r(Z)]
=
\bigl\langle
(\mathcal S_rT)(Z,\ldots,Z),\,M_r(Z)
\bigr\rangle_{\mathsf F}$,
where the pairing is taken over the last \(r\) tensor factors and leaves
the \(\mathcal H\)-component untouched. Moreover
\(\|\mathcal S_rT\|_{\mathsf F}=\|T\|_{\mathsf F}\), since \(\mathcal S_r\)
only rearranges the coefficients of \(T\). By Cauchy--Schwarz inequality,
\begin{align}\label{eq:proof2-pointwise}
\bigl\|T[Z^{p-r},M_r(Z)]\bigr\|_{\mathcal H}
\le
\|M_r(Z)\|_{\mathsf F}\,
\bigl\|(\mathcal S_rT)(Z,\ldots,Z)\bigr\|_{\mathcal H\otimes(\mathbb R^d)^{\otimes r}}.
\end{align}

Conditionally on \(Z\), the law of \(R\) is a translate of the conditional
law of \(G_\Sigma\) given \(Z\). Since the latter has a log-concave density
with respect to \(\mathcal N(0,\Sigma)\), the conditional law of \(R\) is
\(\Sigma\)-uniformly log-concave. Therefore,
Proposition~\ref{prop:ULC-moment-tensors} gives
\(\|M_r(Z)\|_{\mathsf F}\lesssim_p\rho_r(\Sigma)\) almost surely.

Taking \(L_q\)-norms on both sides of \eqref{eq:proof2-pointwise} and applying the induction hypothesis at
degree \(p-r\) to \(\mathcal S_rT\) yields
\begin{align}\label{eq:proof2-low-degree}
\left(\E\bigl\|T[Z^{p-r},M_r(Z)]\bigr\|_{\mathcal H}^q\right)^{1/q}
&\lesssim_p \rho_r(\Sigma) \left(\E \bigl\|(\mathcal S_rT)(Z,\ldots,Z)\bigr\|_{\mathcal H\otimes(\mathbb R^d)^{\otimes r}}^q\right)^{1/q} \nonumber\\
& \lesssim_p
\rho_r(\Sigma)\Psi_{p-r}(q,\Sigma)\|\mathcal{S}_r T\|_{\mathsf F}
\nonumber\\
&=
\rho_r(\Sigma)\Psi_{p-r}(q,\Sigma)\|T\|_{\mathsf F}
\nonumber\\
&\le
\Psi_p(q,\Sigma)\|T\|_{\mathsf F},
\end{align}
where the last step follows from the relation between
\(\rho_r(\Sigma)\) and \(\Psi_{p-r}(q,\Sigma)\) in 
\eqref{eq:rho-Psi-product}. 

Combining \eqref{eq:proof2-expansion}, \eqref{eq:proof2-Gaussian}, and
\eqref{eq:proof2-low-degree} gives
\[
\left(
\E
\big\|
T(Z,\ldots,Z)
\big\|_{\mathcal H}^q
\right)^{1/q}
\lesssim_p
\Psi_p(q,\Sigma)\|T\|_{\mathsf F},
\]
which proves the estimate at degree \(p\) and closes the induction. Note that the Hilbert-valued formulation of
Theorem~\ref{thm:main2} is used in the induction: even for scalar-valued \(T\), the
induction hypothesis is applied to the maps \(\mathcal S_rT\), which take
values in \(\mathcal H\otimes(\mathbb R^d)^{\otimes r}\).

\bigskip

Figure~\ref{fig:roadmap} provides a roadmap of the proofs of our main results.

\begin{figure}[H]
\centering
\resizebox{0.9\textwidth}{!}{
\begin{tikzpicture}[
font=\small,
box/.style={
draw,
rounded corners=2pt,
align=center,
inner sep=5pt,
minimum height=9mm,
text width=42mm
},
smallbox/.style={
draw,
rounded corners=2pt,
align=center,
inner sep=4pt,
minimum height=8mm,
text width=37mm
},
result/.style={box, fill=purple!8, thick},
example/.style={box, fill=blue!8, thick},
reduction/.style={box, fill=orange!10, thick},
estimate/.style={box, fill=green!9, thick},
input/.style={box, fill=orange!10, thick},
arrow/.style={-{Stealth[length=2.2mm]}, thick},
group/.style={
draw,
dashed,
rounded corners=3pt,
inner sep=7pt
}
]

\node[result, text width=70mm] (upper) at (0,0)
{Concentration of sample moment tensors \\Theorem~\ref{thm:main1}};

\node[reduction, text width=70mm] (hilbert) at (0,-1.7)
{Square-function and weak-moment reduction\\
Lemma~\ref{lem:Hilbert-moment-reduction}};

\node[estimate] (square) at (-3,-3.4)
{Square-function estimate\\
Proposition~\ref{prop:master-common-diagonal}};

\node[estimate] (scalar) at (3.2,-3.4)
{Weak-moment estimate\\
Proposition~\ref{prop:master-scalar-projections}};

\node[input,text width=45mm] (latala) at (-2.5,-5.1)
{Lata{\l}a's moment estimate \\
Lemma~\ref{lem:latala-iid-centered-sum}};

\node[input] (subdir) at (3.2,-5.1)
{Weak tensor moments\\
Lemma~\ref{lem:subgaussian-directional-tensor-moments}};

\node[estimate, text width=46mm] (log-concave) at (-2.4,-7)
{Moment tensors of uniformly\\
log-concave distributions\\
Proposition~\ref{prop:ULC-moment-tensors}};

\node[result, text width=44mm] (convex) at (3.2,-7)
{Polynomial moments under\\
Gaussian convex domination\\
Theorem~\ref{thm:main2}};

\node[estimate, text width=41mm] (wick) at (8.5,-7)
{Polynomial moments for \\ Gaussian vectors \\
Proposition~\ref{prop:Hilbert-Gaussian-tensor-Lq}};

\node[example] (subsharp) at (7.5, 0)
{Example~\ref{example:subgaussian_tensor_sharpness}\\Sharpness of Theorem \ref{thm:main1}
};

\draw[arrow] (hilbert) -- (upper);
\draw[arrow] (square) -- (hilbert);
\draw[arrow] (scalar) -- (hilbert);
\draw[arrow] (latala) -- (scalar);
\draw[arrow] (subdir) -- (scalar);
\draw[arrow] (convex) -- (subdir);
\draw[arrow] (wick) -- (convex);
\draw[arrow] (log-concave) -- (convex);
\draw[arrow] (subsharp) -- (upper);
\draw[arrow] (wick) -- (subdir);
\end{tikzpicture}
}
\caption{Roadmap of the proofs of Theorems~\ref{thm:main1} and~\ref{thm:main2}.}
\label{fig:roadmap}
\end{figure}

\section{Concentration of sample moment tensors}\label{sec:proof1}

\subsection{Proof of Theorem~\ref{thm:main1}}

Our proof begins with the following reduction, which relates strong
moments of Hilbert-valued sums to a square-function moment and weak moments of
scalar projections. Its proof combines the Hilbert-space decoupling inequality
\cite[Exercise~6.1.4]{vershynin2018high} with a symmetrization
argument. We include the details in
Subsection~\ref{subsec:Hilbert-moment-reduction} for completeness.

\begin{lemma}[Square-function and weak-moment reduction for Hilbert-valued sums]
\label{lem:Hilbert-moment-reduction}
Let \(Z_1,\ldots,Z_N\) be independent centered random vectors in a real
Hilbert space \(\mathcal H\), let \(q\ge2\), and assume that
\[
\|Z_i\|_{L_q(\mathcal H)}
:=
\left(
\E\|Z_i\|_{\mathcal H}^q
\right)^{1/q}
<\infty,
\qquad i=1,\ldots,N.
\]
Then
\[
\bigg\|
\sum_{i=1}^N Z_i
\bigg\|_{L_q(\mathcal H)}
\asymp
\left\|
\bigg(
\sum_{i=1}^N \|Z_i\|_{\mathcal H}^2
\bigg)^{1/2}
\right\|_{L_q}
+
\sup_{\|h\|_{\mathcal H}=1}
\left\|
\sum_{i=1}^N
\langle h,Z_i\rangle_{\mathcal H}
\right\|_{L_{q/2}}.
\]
\end{lemma}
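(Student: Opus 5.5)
The plan is to write $S:=\sum_{i=1}^N Z_i$ and study $\|S\|_{\mathcal H}^2$ through its diagonal/off-diagonal decomposition
\[
\|S\|_{\mathcal H}^2=\sum_{i=1}^N\|Z_i\|_{\mathcal H}^2+\sum_{i\neq j}\langle Z_i,Z_j\rangle_{\mathcal H}=:D+O.
\]
Then $\|S\|_{L_q(\mathcal H)}=\|\|S\|^2\|_{L_{q/2}}^{1/2}$. Because $q/2\ge1$, the triangle inequality in $L_{q/2}$ can be used freely. Write $A$ for the square-function term and $B$ for the weak-moment term in the statement.

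\emph{Upper bound.} The triangle inequality gives
\[
\|S\|_{L_q}^2\le \|D\|_{L_{q/2}}+\|O\|_{L_{q/2}}=A^2+\|O\|_{L_{q/2}}.
\]
Next I decouple the off-diagonal chaos $O$ using the Hilbert-space decoupling inequality \cite[Exercise~6.1.4]{vershynin2018high} with the convex function $t\mapsto|t|^{q/2}$. This gives $\|O\|_{L_{q/2}}\lesssim\|\sum_{i\ne j}\langle Z_i,Z_j'\rangle\|_{L_{q/2}}$, where $(Z_j')$ is an independent copy. Conditioning on $(Z_i')$, the inner sum is $\sum_i\langle Z_i,h_i\rangle$ with $h_i=\sum_{j\ne i}Z'_j$. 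The $i$-dependence of $h_i$ is the delicate point. I remove it in the standard way: replace $h_i$ by $h=\sum_j Z_j'$ and subtract the diagonal $\sum_i\langle Z_i,Z_i'\rangle$. The latter is bounded by Cauchy--Schwarz, giving at most $\|(\sum\|Z_i\|^2)^{1/2}(\sum\|Z_i'\|^2)^{1/2}\|_{L_{q/2}}\le A^2$ by Hölder. For the main term, conditioning on $Z'$ and using $\langle S,h\rangle=\|h\|\langle S,h/\|h\|\rangle$ gives
\[
\|\langle S,S'\rangle\|_{L_{q/2}}\le \Big\|\,\|S'\|\,\Big\|\Big\|_{L_{q/2}}\cdot B\le \|S\|_{L_{q/2}}B\le\|S\|_{L_q}B.
\]
Altogether this yields $x^2\lesssim A^2+Bx$ with $x=\|S\|_{L_q}$, which forces $x\lesssim A+B$.

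\emph{Lower bound.} Bounding $B\le\|S\|_{L_q}$ is immediate, since for unit $h$ we have $|\langle h,S\rangle|\le\|S\|$ and $\|\cdot\|_{L_{q/2}}\le\|\cdot\|_{L_q}$. For $A\lesssim\|S\|_{L_q}$ I symmetrize. Let $\varepsilon_i$ be Rademacher signs. Centering and independence give $\|\sum\varepsilon_iZ_i\|_{L_q}\le2\|S\|_{L_q}$. Conditionally on $(Z_i)$, the Khintchine–Kahane inequality in Hilbert space gives
\[
\E_\varepsilon\Big\|\sum\varepsilon_iZ_i\Big\|^q\gtrsim\Big(\sum\|Z_i\|^2\Big)^{q/2}.
\]
Indeed, by Jensen in $q/2\ge1$ one has $\E_\varepsilon\|\cdot\|^q\ge(\E_\varepsilon\|\cdot\|^2)^{q/2}=(\sum\|Z_i\|^2)^{q/2}$, with constant $1$. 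Taking expectations gives $A\le 2\|S\|_{L_q}$.

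The main obstacle is the decoupling step in the upper bound. There are two concerns: that the constant is universal (independent of $q$), and that the $i$-dependence of $h_i$ is handled cleanly. Vershynin's exercise provides universal constants for convex functionals of $\sum_{i\ne j}\langle Z_i,Z_j\rangle$, so the convexity of $t\mapsto|t|^{q/2}$ for $q\ge2$ is exactly what is needed. The diagonal correction is then controlled by $A^2$ as above, and the argument closes.
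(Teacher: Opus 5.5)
Your proposal is correct and follows essentially the same route as the paper: the diagonal/off-diagonal split of $\|S\|_{\mathcal H}^2$, Hilbert-space decoupling with $t\mapsto|t|^{q/2}$, conditioning to extract $\sup_h\|\langle h,S'\rangle\|_{L_{q/2}}$, the quadratic inequality $x^2\lesssim A^2+Bx$, and, for the lower bound, symmetrization with Rademacher orthogonality and Jensen. The only difference is that you subtract the diagonal $\sum_i\langle Z_i,Z_i'\rangle$ and bound it by $A^2$ via Cauchy--Schwarz and H\"older. The paper skips this step because \cite[Exercise~6.1.4]{vershynin2018high} already places the full double sum $\sum_{i,j}\langle Z_i,Z_j'\rangle_{\mathcal H}=\langle S,S'\rangle_{\mathcal H}$ on the right-hand side; your correction is harmless and also valid.
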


It is important that the implicit constant in Lemma~\ref{lem:Hilbert-moment-reduction} is universal and, in particular,
independent of \(q\). We apply Lemma~\ref{lem:Hilbert-moment-reduction} in the
Hilbert tensor space \((\mathbb R^d)^{\otimes p}\), equipped with the Frobenius
inner product, to the independent centered tensors
\[
Z_i:=X_i^{\otimes p}-\E X^{\otimes p},
\qquad i=1,\ldots,N.
\]
The following proposition controls the square-function term in both the
sub-Gaussian and Gaussian settings. Its proof is given in
Subsection~\ref{subsec:Square-function-estimate}.

\begin{proposition}[Square-function estimate]
\label{prop:master-common-diagonal}
Let \(X_1,\ldots,X_N\) be i.i.d. copies of a centered sub-Gaussian random
vector \(X\in\mathbb R^d\) with covariance matrix \(\Sigma\) and
sub-Gaussian constant \(K\). Let \(p\ge2\) be an integer. Then, for every
\(q\ge1\),
\begin{equation}\label{eq:master-diagonal-subg}
\left\|
\bigg(
\sum_{i=1}^N
\big\|
X_i^{\otimes p}-\E X^{\otimes p}
\big\|_{\mathsf F}^2
\bigg)^{1/2}
\right\|_{L_q}
\lesssim_p
K^p
\left(
\sqrt N\,\tr(\Sigma)^{p/2}
+
\|\Sigma\|^{p/2}q^{p/2}
\right).
\end{equation}
Moreover, if \(X\) is Gaussian, then
\begin{equation}\label{eq:master-diagonal-g}
\left\|
\bigg(
\sum_{i=1}^N
\big\|
X_i^{\otimes p}-\E X^{\otimes p}
\big\|_{\mathsf F}^2
\bigg)^{1/2}
\right\|_{L_q}
\asymp_p
\sqrt N\,\tr(\Sigma)^{p/2}
+
\|\Sigma\|^{p/2}q^{p/2}.
\end{equation}
\end{proposition}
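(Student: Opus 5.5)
We need to bound the square-function moment. Set \(W_i:=\|X_i^{\otimes p}-\E X^{\otimes p}\|_{\mathsf F}^2\), which are i.i.d. nonnegative random variables, so the left-hand side equals \(\|\sum_i W_i\|_{L_{q/2}}^{1/2}\). The plan is to reduce everything to moments of the scalar \(\|X\|_2\), and then apply Lata{\l}a's estimate for sums of nonnegative i.i.d. variables.

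\textbf{Pointwise reduction.} First, \(\|X^{\otimes p}\|_{\mathsf F}=\|X\|_2^p\) and \(\|\E X^{\otimes p}\|_{\mathsf F}\le \E\|X\|_2^p\). Hence
\[
W_i\le 2\|X_i\|_2^{2p}+2(\E\|X\|_2^p)^2 .
\]
\textbf{Moments of the norm.} For the norm itself, by the convex domination \eqref{eq:convex-dominatation} (the map \(z\mapsto\|z\|_2^s\) is convex for \(s\ge1\)) together with Gaussian concentration of \(\|G_\Sigma\|_2\), we get
\[
\|\|X\|_2\|_{L_s}\lesssim K\bigl(\tr(\Sigma)^{1/2}+\|\Sigma\|^{1/2}\sqrt s\bigr).
\]
This gives \(\|\|X\|_2^{2p}\|_{L_s}\lesssim_p K^{2p}(\tr(\Sigma)^p+\|\Sigma\|^p s^p)\), and in particular \((\E\|X\|_2^p)^2\lesssim_p K^{2p}\tr(\Sigma)^p\). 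Consequently \(\sum_iW_i\le 2\sum_i\|X_i\|_2^{2p}+C_pK^{2p}N\tr(\Sigma)^p\).

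\textbf{Sum estimate.} For \(\|\sum_i Y_i\|_{L_r}\) with \(Y_i=\|X_i\|_2^{2p}\) and \(r=q/2\) (for \(q<2\), monotonicity of \(L_q\) norms reduces to \(q=2\)), we invoke Lata{\l}a's estimate, or simply the elementary consequence \(\|\sum Y_i\|_{L_r}\lesssim N\E Y+\sup_{1\le s\le r}(r/s)(N/r)^{1/s}\|Y\|_{L_s}\). Simpler still is Rosenthal-type bounds for nonnegative variables with constant \(r/\log r\). The terms are as follows:
\begin{itemize}
\item the mean term \(N\E Y\lesssim_p K^{2p}N\tr(\Sigma)^p\);
\item the \(\tr(\Sigma)^p\) part of \(\|Y\|_{L_s}\), which contributes at most \((r/s)(N/r)^{1/s}\tr(\Sigma)^p\lesssim N\tr(\Sigma)^p\) when \(r\le N\). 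When \(r>N\), the bound \(\|\sum Y_i\|_{L_r}\le N\|Y\|_{L_r}\) gives \(N\tr(\Sigma)^p+N\|\Sigma\|^pr^p\), and since \(N\le r\) the last piece is at most \(\|\Sigma\|^p r^{p+1}\)?
\end{itemize}
That last piece is too big. So the \(\|\Sigma\|^pr^p\) part must be handled by Lata{\l}a's formula rather than crudely. Since \(Y\) has \(\psi_{1/p}\)-type tails, with \(\|Y\|_{L_s}\lesssim\|\Sigma\|^p s^p\) above the trace level, the Lata{\l}a expression \(\sup_s (r/s)(N/r)^{1/s}s^p\) is maximized at \(s=r\), giving \(r^p\). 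Indeed, for \(p\ge1\), the function \(s\mapsto s^{p-1}(N/r)^{1/s}\) over \(s\in[1,r]\) is at most \(C_p\max(r^{p-1}\cdot r\cdot (N/r)^{1/r}/r,\dots)\), which is handled by splitting \(N\ge r\) versus \(N<r\). So \(\|\sum Y_i\|_{L_r}\lesssim_p K^{2p}(N\tr(\Sigma)^p+\|\Sigma\|^pr^p)\). Taking square roots gives \eqref{eq:master-diagonal-subg}.

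\textbf{Gaussian lower bound.} For \eqref{eq:master-diagonal-g}, the upper bound is the case \(K\asymp1\). For the lower bound, \(\sum_iW_i\ge W_1\). By the triangle inequality, \(\|W_1^{1/2}\|_{L_q}\ge\|\|X\|_2^p\|_{L_q}-\E\|X\|_2^p\), and with \(\|\|G\|_2^p\|_{L_q}\gtrsim_p \tr(\Sigma)^{p/2}+\|\Sigma\|^{p/2}q^{p/2}\) this yields the \(q^{p/2}\) term when \(q\) is large. For the \(\sqrt N\tr(\Sigma)^{p/2}\) term we need \(\E W\gtrsim_p\tr(\Sigma)^p\), i.e. the variance of \(X^{\otimes p}\) in Frobenius norm. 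By Wick/Isserlis, \(\E\|X\|_2^{2p}-\|\E X^{\otimes p}\|_{\mathsf F}^2\gtrsim_p\tr(\Sigma)^p\) (this holds because the fully paired contraction pattern through all \(2p\) copies dominates). Then \(\|\sum W_i\|_{L_{q/2}}\ge N\E W\).

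\textbf{Main obstacle.} The main obstacle is the careful Lata{\l}a computation in the regime \(q>N\), and the Gaussian variance lower bound, which requires the Wick expansion.
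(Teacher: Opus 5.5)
Your approach is sound for the upper bound and for the Gaussian lower bound when $q\ge2$. However, the lower bound in \eqref{eq:master-diagonal-g} is not established for $1\le q<2$.

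\textbf{The gap at $1\le q<2$.} For $q<2$ we have $q/2<1$, so $\|\sum_iW_i\|_{L_{q/2}}\le N\E W$. Your step $\|\sum_iW_i\|_{L_{q/2}}\ge N\E W$ therefore reverses. Monotonicity of $L_q$-norms only carries upper bounds down to smaller $q$, so a reverse moment comparison is needed. The paper uses finite-chaos hypercontractivity. You can also close it inside your own framework. Set $V:=(\sum_iW_i)^{1/2}$; H\"older's inequality gives $\|V\|_{L_1}\ge\|V\|_{L_2}^3/\|V\|_{L_4}^2$. Your upper bound at $q=4$, together with $\|\Sigma\|\le\tr(\Sigma)$ and $\|V\|_{L_2}^2=N\E W\ge N\tr(\Sigma)^p$, gives $\|V\|_{L_4}\lesssim_p\sqrt N\,\tr(\Sigma)^{p/2}\le\|V\|_{L_2}$. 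Hence $\|V\|_{L_q}\ge\|V\|_{L_1}\gtrsim_p\sqrt N\,\tr(\Sigma)^{p/2}$, which dominates the whole right-hand side when $q<2$.

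\textbf{Upper bound.} This otherwise follows the paper: moments of $\|X\|_2$, then Lata{\l}a's formula. Getting the norm moments from convex domination and Gaussian concentration, instead of the Hsu--Kakade--Zhang bound, is fine. One caution: only Lata{\l}a's formula on the range $\max\{1,r/N\}\le s\le r$ is sharp enough. The Rosenthal bound with constant $r/\log r$, or a supremum over all $s\in[1,r]$, already for $N=1$ puts a factor of order $r/\log r$ in front of $\tr(\Sigma)^p$. That factor is not absorbed when $\|\Sigma\|\ll\tr(\Sigma)$. On the correct range, $\sup_s(r/s)(N/r)^{1/s}\le N$ for every $r$. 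Also $\sup_s(r/s)(N/r)^{1/s}s^p\lesssim_p N+r^p$, not simply $r^p$, since the value at $s=1$ is $N$ when $r\le N$. The extra $N\|\Sigma\|^p$ is absorbed by $N\tr(\Sigma)^p$.

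\textbf{Lower bound for $q\ge2$.} Here your route is genuinely simpler than the paper's. The paper evaluates the two-sided Lata{\l}a formula at two values of $s$. Because it only bounds $\|\E G^{\otimes p}\|_{\mathsf F}\le(p-1)!!\,\|\Sigma\|_{\mathsf F}^{p/2}$, it also needs a case split on the effective rank. You only need $\sum_iW_i\ge W_1$ and $\|\cdot\|_{L_{q/2}}\ge\|\cdot\|_{L_1}$.

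\textbf{The claim $\E W\ge\tr(\Sigma)^p$.} This is correct, but the reason is positivity rather than dominance. Write $\E W=\E\langle G,G\rangle^p-\E\langle G,G'\rangle^p$ with $G'$ an independent copy, and label the two slots of the $i$-th factor $a_i,b_i$. Wick's formula writes both expectations as sums, over perfect matchings of $\{a_1,b_1,\ldots,a_p,b_p\}$, of nonnegative products of $\tr(\Sigma^m)$. The second sum runs only over matchings that never pair an $a$ with a $b$. The excluded matching $\{a_i,b_i\}_{i\le p}$ contributes $\tr(\Sigma)^p$, which removes the case split altogether.
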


The weak-moment term in Lemma~\ref{lem:Hilbert-moment-reduction} involves
scalar projections of the tensor-valued sum. The following proposition
provides the required estimates in the sub-Gaussian and Gaussian cases,
giving rise to the intermediate scales \(\rho_p(\Sigma)\) and
\(\rho_{p,G}(\Sigma)\), respectively. Its proof is given in
Subsection~\ref{subsec:weak-moment-estimate}.

\begin{proposition}[Weak-moment estimate]
\label{prop:master-scalar-projections}
Let \(X_1,\ldots,X_N\) be i.i.d. copies of a centered sub-Gaussian random
vector \(X\in\mathbb R^d\) with covariance matrix \(\Sigma\) and
sub-Gaussian constant \(K\). Let \(p\ge2\) be an integer. Then, for every
\(q\ge1\),
\begin{align}
\sup_{\|A\|_{\mathsf F}\le1}\bigg\|
\sum_{i=1}^N
\left\langle
A,X_i^{\otimes p}-\E X^{\otimes p}
\right\rangle_{\mathsf F}
\bigg\|_{L_q}
\lesssim_p K^p
\begin{cases}
\rho_p(\Sigma)\sqrt{N(q\land N)}
+
\|\Sigma\|^{p/2}q^{p/2},
& p \text{ even},\\[3pt]
\rho_p(\Sigma)\sqrt{Nq}
+
\|\Sigma\|^{p/2}q^{p/2},
& p \text{ odd},
\end{cases}
\label{eq:master-scalar-subg}
\end{align}
where \(\rho_p(\Sigma)\) is as in Theorem~\ref{thm:main1}. Moreover, if \(X\) is Gaussian, then
\begin{align}
\sup_{\|A\|_{\mathsf F}\le1}\bigg\|
\sum_{i=1}^N
\left\langle
A,X_i^{\otimes p}-\E X^{\otimes p}
\right\rangle_{\mathsf F}
\bigg\|_{L_q}
\asymp_p
\begin{cases}
\rho_{p,G}(\Sigma)\left(\sqrt{Nq}+q\right)
+
\|\Sigma\|^{p/2}q^{p/2},
& p \text{ even},\\[3pt]
\rho_{p,G}(\Sigma)\sqrt{Nq}
+
\|\Sigma\|^{p/2}q^{p/2},
& p \text{ odd},
\end{cases}
\label{eq:master-scalar-Gaussian}
\end{align}
where \(\rho_{p,G}(\Sigma)\) is as in Theorem~\ref{thm:main1}.
\end{proposition}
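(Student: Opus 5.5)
The plan is to write $Y_i:=\langle A,X_i^{\otimes p}-\E X^{\otimes p}\rangle_{\mathsf F}$. These are i.i.d. centered scalar variables, and we want $\|\sum_i Y_i\|_{L_q}$ uniformly over $\|A\|_{\mathsf F}\le1$. Lata{\l}a's estimate for i.i.d. centered sums (Lemma~\ref{lem:latala-iid-centered-sum}) will reduce this to the one-variable moment function $s\mapsto\|Y\|_{L_s}$. Concretely, we use the standard consequence
\[
\Big\|\sum_{i=1}^N Y_i\Big\|_{L_q}\lesssim \sqrt{Nq}\,\|Y\|_{L_2}+\sup_{2\le s\le q}\frac{q}{s}\Big(\frac Nq\Big)^{1/s}\|Y\|_{L_s},
\]
valid up to universal constants, together with the matching lower bound in the Gaussian case. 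The single-variable input comes from Lemma~\ref{lem:subgaussian-directional-tensor-moments}, i.e.\ from Theorem~\ref{thm:main2} applied to $Z=(CK)^{-1}X$, which gives $\|Y\|_{L_s}\lesssim_p K^p(\rho_p(\Sigma)\sqrt{s}^{\,[p\text{ odd}]}+\|\Sigma\|^{p/2}s^{p/2})$. For the even case, this is $K^p(\|\Sigma\|_{\mathsf F}^{p/2}+\|\Sigma\|^{p/2}s^{p/2})$.

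\textbf{Sub-Gaussian upper bound.} We treat the two parts of the moment function separately.
\begin{itemize}
\item The part $\|\Sigma\|^{p/2}s^{p/2}$ is a $\psi_{2/p}$-type tail. Plugging it into the Lata{\l}a sup, the maximum of $(q/s)(N/q)^{1/s}s^{p/2}$ over $s\in[2,q]$ is $\lesssim_p \sqrt{Nq}+q^{p/2}$. The $\sqrt{Nq}$ piece is absorbed into the variance term, since $\|\Sigma\|^{p/2}\le\rho_p(\Sigma)$, and the $q^{p/2}$ piece gives $\|\Sigma\|^{p/2}q^{p/2}$.
\item The part $\rho_p(\Sigma)\sqrt s$ (odd $p$) contributes at most $\rho_p(\Sigma)\sqrt{Nq}$ in the same way.
\item The constant part $\rho_p(\Sigma)$ (even $p$) gives $\sup_{s\le q}(q/s)(N/q)^{1/s}\rho_p$. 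If $q\le N$ this is dominated by $\sqrt{Nq}\,\rho_p$. If $q>N$, taking $s\approx q$ gives $\approx N\rho_p=\rho_p\sqrt{N\cdot N}$, and smaller $s$ gives no more; this is exactly the $\sqrt{N(q\wedge N)}$ truncation. For even $p$, the variance term $\sqrt{Nq}\,\|Y\|_{L_2}$ is likewise $\lesssim\rho_p\sqrt{Nq}$, but when $q>N$ one should not use the variance term. Instead I would bound directly with the triangle inequality, $\|\sum Y_i\|_{L_q}\le N\|Y\|_{L_q}\lesssim N\rho_p+N\|\Sigma\|^{p/2}q^{p/2}$.
\end{itemize}
The last expression is not small enough, so the $q>N$ even case needs care. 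I would split $Y=Y'+Y''$ with $Y'$ bounded at level $\rho_p$ and $Y''$ the heavy tail. Lata{\l}a applied to the heavy-tail piece gives $\|\Sigma\|^{p/2}q^{p/2}$, since $Nq^{p/2}$ is not what the sup gives: the sup gives $\max(\sqrt{Nq},q^{p/2})$. Since $q>N$ implies $\sqrt{Nq}\le q\le q^{p/2}$, this is fine.

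\textbf{Gaussian two-sided estimate.} Here $Y$ is a polynomial of degree $p$ in a Gaussian vector. I would compute $\|Y\|_{L_s}$ exactly up to constants via the chaos decomposition, as in Proposition~\ref{prop:Hilbert-Gaussian-tensor-Lq}. Centering removes the $j=0$ chaos, so the moment function is $\Psi_p^\circ(s,\Sigma)\asymp\rho_{p,G}(\Sigma)s^{1/2\text{ or }1}+\|\Sigma\|^{p/2}s^{p/2}$. For even $p$ the lowest chaos is $j=2$, giving $\|\Sigma\|_{\mathsf F}^{p/2-1}\|\Sigma\|\,s$. The sup is attained by $A$ aligned with $\Sigma^{\otimes p/2}$-type structure on the top eigendirection, which gives the lower bound. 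Feeding this into Lata{\l}a's two-sided estimate, the linear-in-$s$ term produces the extra $\rho_{p,G}\,q$, and the variance term gives $\rho_{p,G}\sqrt{Nq}$.

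\textbf{Lower bounds and main obstacle.} For the lower bounds I would exhibit specific $A$. One choice is the normalized symmetrized tensor built from the top eigenvector $u$ together with $\Sigma^{\otimes (p-1)/2}/\|\Sigma\|_{\mathsf F}^{(p-1)/2}$ (odd $p$), or with $\Sigma^{\otimes(p/2-1)}\otimes uu^\top$ (even $p$). The other is $u^{\otimes p}$, which gives the $q^{p/2}$ term. In each case I would verify the moment growth of a single summand and use Lata{\l}a's lower bound. I expect the main obstacle to be the Gaussian lower bound for the intermediate term: it requires showing that the relevant chaos component of $\langle A,G^{\otimes p}\rangle$ really has variance $\asymp_p\rho_{p,G}^2$, which means checking that the lower-order contractions do not cancel it. The careful tracking of the $q>N$ regime in the even sub-Gaussian case is the other delicate point.
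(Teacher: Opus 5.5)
Your architecture is the paper's: Lata{\l}a's estimate for i.i.d.\ centered sums, fed with the single-tensor weak moments of Lemma~\ref{lem:subgaussian-directional-tensor-moments}, then an optimization in $s$. The gap is that the version of Lata{\l}a's estimate you actually use is too weak, and you evaluate its supremum incorrectly.

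Lemma~\ref{lem:latala-iid-centered-sum} takes the supremum over $\max\{2,q/N\}\le s\le q$. Your ``standard consequence'' instead uses $2\le s\le q$ plus a variance term. That is still a valid upper bound, but it is strictly weaker when $q>N$. Write $w(s):=(q/s)(N/q)^{1/s}$ and take $q>N$.
\begin{itemize}
\item At $s=q$ one has $w(q)=(N/q)^{1/q}\asymp 1$, not $\asymp N$. The value $\asymp N$ occurs at $s=q/N$.
\item On $[2,q/N)$ the weight is larger, not smaller. For $q\ge e^2N$, $\sup_{2\le s\le q}w(s)=w(\log(q/N))=q/(e\log(q/N))$, which exceeds $N$.
\item Likewise $\sup_{2\le s\le q}w(s)\sqrt s=q/\sqrt{2e\log(q/N)}$. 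This is much larger than $\sqrt{Nq}$ when $q\gg N$.
\end{itemize}
The last computation breaks your odd case. The resulting term $\rho_p(\Sigma)\,q/\sqrt{\log(q/N)}$ is not dominated by $\rho_p(\Sigma)\sqrt{Nq}+\|\Sigma\|^{p/2}q^{p/2}$. For example, take $p=3$, $N$ fixed, $\|\Sigma\|=1$ and $\|\Sigma\|_{\mathsf F}=q$. The target is then $\asymp_N q^{3/2}$, while your bound is of order $q^2/\sqrt{\log q}$. So the claim that the $\rho_p\sqrt s$ part ``contributes at most $\rho_p(\Sigma)\sqrt{Nq}$ in the same way'' is false for the tool you wrote down. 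Your truncation patch addresses only even $p$. Even there, you would still need to show that the moment bound yields a split with $|Y'|\lesssim_p\rho_p(\Sigma)$ and $\|Y''\|_{L_s}\lesssim_p\|\Sigma\|^{p/2}s^{p/2}$ after recentering.

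The fix is the lower limit $s\ge q/N$ in Lemma~\ref{lem:latala-iid-centered-sum}, which you cite but then drop. For $\alpha\in\{0,1/2\}$, the derivative of $\log\bigl(w(s)s^\alpha\bigr)$ is $\bigl(\log(q/N)-(1-\alpha)s\bigr)/s^2$. Its zero lies below $\max\{2,q/N\}$, since $\log x<x$ and $2\log x<x$. Hence $w(s)s^\alpha$ decreases on the whole admissible range, and its supremum is attained at $s=\max\{2,q/N\}$. This gives exactly \eqref{eq:elementary-latala-optimization}: $\mathcal L_0\asymp\sqrt{N(q\land N)}$ and $\mathcal L_{1/2}\asymp\sqrt{Nq}$. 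The $q\land N$ truncation then falls out of Lata{\l}a's formula, with no truncation of $Y$ and no variance term.

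For the Gaussian statement you need not redo the chaos analysis. The Gaussian two-sided part of Lemma~\ref{lem:subgaussian-directional-tensor-moments} already gives $\sup_A\|\langle A,X^{\otimes p}-\E X^{\otimes p}\rangle_{\mathsf F}\|_{L_s}$ up to constants, and one interchanges $\sup_A$ and $\sup_s$ in the two-sided formula. Note that the variance computation for a single chaos component that you propose would not give the lower bound. The $\rho_{p,G}(\Sigma)q$ term comes from $s=q$ and requires $L_s$-growth of order $\rho_{p,G}(\Sigma)s$. The lemma obtains this by testing against $(v\otimes v)\otimes B_\perp^{\otimes(k-1)}$, with $B_\perp$ supported off the top eigendirection, so that $\xi_1$ is independent of $X^\top B_\perp X$; it then conditions on $\xi_1$ and applies Jensen.

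Finally, Lata{\l}a's lemma needs $q\ge2$. The range $1\le q<2$ still has to be handled: by monotonicity for the upper bounds, and by finite-chaos moment comparison for the Gaussian lower bound.
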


We now combine Lemma~\ref{lem:Hilbert-moment-reduction} and
Propositions~\ref{prop:master-common-diagonal}
and~\ref{prop:master-scalar-projections} to prove Theorem \ref{thm:main1}.

\begin{proof}[Proof of Theorem~\ref{thm:main1}]
We first consider \(q\ge2\). Applying
Lemma~\ref{lem:Hilbert-moment-reduction} in
\((\mathbb R^d)^{\otimes p}\), equipped with the Frobenius inner product,
to the independent centered tensors
\[
X_i^{\otimes p}-\E X^{\otimes p},
\qquad i=1,\ldots,N,
\]
and then dividing the resulting equivalence by \(N\), we obtain
\begin{align}
\left(
\E
\bigg\|
\frac1N\sum_{i=1}^N
\left(
X_i^{\otimes p}-\E X^{\otimes p}
\right)
\bigg\|_{\mathsf F}^q
\right)^{1/q}
& \asymp
\frac1N
\left\|
\bigg(
\sum_{i=1}^N
\big\|
X_i^{\otimes p}-\E X^{\otimes p}
\big\|_{\mathsf F}^2
\bigg)^{1/2}
\right\|_{L_q}\nonumber\\
&\quad +
\frac1N
\sup_{\|A\|_{\mathsf F}\le1}
\bigg\|
\sum_{i=1}^N
\left\langle
A,
X_i^{\otimes p}-\E X^{\otimes p}
\right\rangle_{\mathsf F}
\bigg\|_{L_{q/2}}.
\label{eq:master-apply-decoupling}
\end{align}
In applying Proposition~\ref{prop:master-scalar-projections} below,
replacing \(q/2\) by \(q\) on its right-hand side changes the estimates
only by constants depending on \(p\).

\medskip
\noindent\textbf{Sub-Gaussian case.} 
\medskip

Combining \eqref{eq:master-apply-decoupling} with the sub-Gaussian parts
of Propositions~\ref{prop:master-common-diagonal}
and~\ref{prop:master-scalar-projections} proves the asserted bound
directly when \(p\) is even. When \(p\ge3\) is odd, the same estimates
give the asserted bound except that $\rho_p(\Sigma)\sqrt{\frac{q\land N}{N}}$
is replaced by $\rho_p(\Sigma)\sqrt{\frac qN}$.
If \(q\le N\), these quantities coincide. If \(q>N\), then
\[
\rho_p(\Sigma)^2
\le
\tr(\Sigma)^{p/2}\|\Sigma\|^{p/2},
\qquad
Nq\le\sqrt N\,q^{p/2},
\]
where we used
\(\|\Sigma\|_{\mathsf F}^2\le\tr(\Sigma)\|\Sigma\|\),
\(\|\Sigma\|\le\tr(\Sigma)\), and \(p\ge3\). Consequently,
\begin{align*}
\rho_p(\Sigma)\sqrt{\frac qN}
\le
\left(
\frac{\tr(\Sigma)^{p/2}}{\sqrt N}
\right)^{1/2}
\left(
\frac{\|\Sigma\|^{p/2}q^{p/2}}{N}
\right)^{1/2}\le
\frac12
\left(
\frac{\tr(\Sigma)^{p/2}}{\sqrt N}
+
\frac{\|\Sigma\|^{p/2}q^{p/2}}{N}
\right).
\end{align*}
Thus the untruncated term is absorbed by the trace and large-deviation
terms when \(q>N\). We therefore obtain
\begin{align*}
\left(
\E
\bigg\|
\frac1N\sum_{i=1}^N X_i^{\otimes p}
-\E X^{\otimes p}
\bigg\|_{\mathsf F}^q
\right)^{1/q}
\lesssim_p
K^p
\left(
\frac{\tr(\Sigma)^{p/2}}{\sqrt N}
+
\rho_p(\Sigma)\sqrt{\frac{q\land N}{N}}
+
\frac{\|\Sigma\|^{p/2}q^{p/2}}{N}
\right).
\end{align*}

\medskip
\noindent\textbf{Gaussian case.}
\medskip

Combining \eqref{eq:master-apply-decoupling} with the Gaussian parts of
Propositions~\ref{prop:master-common-diagonal}
and~\ref{prop:master-scalar-projections} proves the asserted two-sided
estimate directly when \(p\) is odd. When \(p\) is even, the same
argument gives the asserted estimate together with the additional term $\rho_{p,G}(\Sigma)q/N$.
Since $\rho_{p,G}(\Sigma)
=
\|\Sigma\|_{\mathsf F}^{p/2-1}\|\Sigma\|$,
we have
\begin{align*}
\frac{\rho_{p,G}(\Sigma)q}{N}
\le
\frac{\tr(\Sigma)^{p/2-1}\|\Sigma\|q}{N}\lesssim_p
\frac{\tr(\Sigma)^{p/2}}{N}
+
\frac{\|\Sigma\|^{p/2}q^{p/2}}{N}\le
\frac{\tr(\Sigma)^{p/2}}{\sqrt N}
+
\frac{\|\Sigma\|^{p/2}q^{p/2}}{N}.
\end{align*}
Hence the additional term is absorbed, and
\begin{align*}
\left(
\E
\bigg\|
\frac1N\sum_{i=1}^N X_i^{\otimes p}
-\E X^{\otimes p}
\bigg\|_{\mathsf F}^q
\right)^{1/q}
\asymp_p
\frac{\tr(\Sigma)^{p/2}}{\sqrt N}
+
\rho_{p,G}(\Sigma)\sqrt{\frac qN}
+
\frac{\|\Sigma\|^{p/2}q^{p/2}}{N}.
\end{align*}
This proves the claims for \(q\ge2\).

Finally, let \(1\le q<2\). By monotonicity and the comparability of the
corresponding rates at \(q\) and \(2\), the upper bounds follow from the case
\(q=2\). In the Gaussian case, finite-chaos moment comparison
\cite[Theorem~5.10]{Janson1997GaussianHilbertSpaces} gives
\[
\E\bigg\|
\sum_{i=1}^N
\left(X_i^{\otimes p}-\E X^{\otimes p}\right)
\bigg\|_{\mathsf F}
\gtrsim_p
\left(
\E\bigg\|
\sum_{i=1}^N
\left(X_i^{\otimes p}-\E X^{\otimes p}\right)
\bigg\|_{\mathsf F}^2
\right)^{1/2}.
\]
Monotonicity and rate comparability then extend the Gaussian lower bound from
\(q=2\) to \(1\le q<2\). This completes the proof.
\end{proof}

\subsection{Square-function estimate}\label{subsec:Square-function-estimate}

\begin{proof}[Proof of Proposition~\ref{prop:master-common-diagonal}]

\medskip
\noindent\textbf{Sub-Gaussian upper bound.}
\medskip

For \(i=1,\ldots,N\), set
\[
W_i
:=
\big\|
X_i^{\otimes p}-\E X^{\otimes p}
\big\|_{\mathsf F}^2.
\]
Then, for \(q\ge2\),
\[
\left\|
\bigg(
\sum_{i=1}^N
\big\|
X_i^{\otimes p}-\E X^{\otimes p}
\big\|_{\mathsf F}^2
\bigg)^{1/2}
\right\|_{L_q}
=
\bigg\|
\sum_{i=1}^N W_i
\bigg\|_{L_{q/2}}^{1/2}.
\]

The positive semi-definite quadratic-form inequality
\cite[Theorem~2.1 and Remark~2.2]{hsu2012tail}, see also
\cite[Lemma~2.7 and Remark~2.8]{zhivotovskiy2024dimension}, implies that,
for every \(t>0\), with probability at least \(1-e^{-t}\),
\[
\|X\|_2
\lesssim
K
\sqrt{
\tr(\Sigma)+t\|\Sigma\|
}.
\]
Integrating this tail bound gives
\begin{equation*}\label{eq:subg-Euclidean-moment}
\|\|X\|_2\|_{L_s}
\lesssim
K
\sqrt{
\tr(\Sigma)+s\|\Sigma\|
},
\qquad s\ge1.
\end{equation*}

Since
\(\|x^{\otimes p}\|_{\mathsf F}=\|x\|_2^p\), the triangle inequality and Jensen's inequality give
\[
\big\|
X^{\otimes p}-\E X^{\otimes p}
\big\|_{\mathsf F}
\le
\|X\|_2^p+\E\|X\|_2^p.
\]
Consequently, for every \(s\ge1\),
\begin{align}
\|W_1\|_{L_s}
=
\left\|
\big\|
X_1^{\otimes p}-\E X^{\otimes p}
\big\|_{\mathsf F}
\right\|_{L_{2s}}^2
\le
4\|\|X\|_2\|_{L_{2ps}}^{2p}
\lesssim_p
K^{2p}
\left(
\tr(\Sigma)^p+s^p\|\Sigma\|^p
\right).
\label{eq:Wi-moment-short}
\end{align}

Fix \(q\ge2\) and set \(r:=q/2\). Applying Lata{\l}a's moment estimate
for sums of i.i.d.\ nonnegative random variables
\cite[Corollary~1]{latala1997estimation} with moment parameter \(r\), we
obtain
\begin{equation}\label{eq:Latala-square-function}
\bigg\|
\sum_{i=1}^N W_i
\bigg\|_{L_r}
\asymp
\sup_{\max\{1,r/N\}\le s\le r}
\frac{r}{s}
\left(
\frac Nr
\right)^{1/s}
\|W_1\|_{L_s}.
\end{equation}
Substituting \eqref{eq:Wi-moment-short} into
\eqref{eq:Latala-square-function}, optimizing over \(s\), and using
\(\|\Sigma\|\le\tr(\Sigma)\), we obtain
\[
\bigg\|
\sum_{i=1}^N W_i
\bigg\|_{L_r}
\lesssim_p
K^{2p}
\Big(
N\tr(\Sigma)^p
+
r^p\|\Sigma\|^p
\Big).
\]
Since \(r=q/2\), it follows that
\begin{align*}
\left\|
\bigg(
\sum_{i=1}^N
\big\|
X_i^{\otimes p}-\E X^{\otimes p}
\big\|_{\mathsf F}^2
\bigg)^{1/2}
\right\|_{L_q}=
\bigg\|
\sum_{i=1}^N W_i
\bigg\|_{L_{q/2}}^{1/2}
\lesssim_p
K^p
\left(
\sqrt N\,\tr(\Sigma)^{p/2}
+
\|\Sigma\|^{p/2}q^{p/2}
\right).
\end{align*}

For \(1\le q<2\), the same conclusion follows from the case \(q=2\)
and monotonicity of \(L_q\)-norms. This completes the proof of the upper bound for sub-Gaussian random vectors.

\medskip
\noindent \textbf{Gaussian upper bound.}
\medskip

When \(X\) is Gaussian, the upper bound follows immediately from the sub-Gaussian estimate above, since a centered Gaussian random vector satisfies \eqref{eq:def-subg} with \(K=\sqrt{8/3}\).

\medskip
\noindent\textbf{Gaussian lower bound.}
\medskip

Suppose now that \(X=G\sim\mcN(0,\Sigma)\). We first treat \(q\ge2\) and set
\(r:=q/2\ge1\).

\medskip
\textbf{Step 1: a lower bound for \(\|W_1\|_{L_s}\).} We claim that
\begin{equation}\label{eq:gaussian-W1-lower}
\|W_1\|_{L_s}
\gtrsim_p
\tr(\Sigma)^p+s^p\|\Sigma\|^p,
\qquad s\ge1.
\end{equation}
Since
\[
\|W_1\|_{L_s}
=
\left\|
\big\|G^{\otimes p}-\E G^{\otimes p}\big\|_{\mathsf F}
\right\|_{L_{2s}}^2,
\]
it suffices to bound the latter norm below by
\(\tr(\Sigma)^{p/2}\) and by \(s^{p/2}\|\Sigma\|^{p/2}\) separately.

Let \(v_1\) be a unit top eigenvector of \(\Sigma\) and set
\(\xi_1:=\|\Sigma\|^{-1/2}\langle G,v_1\rangle\sim\mcN(0,1)\). Testing
against \(A:=v_1^{\otimes p}\), which satisfies \(\|A\|_{\mathsf F}=1\),
gives
\[
\big\|G^{\otimes p}-\E G^{\otimes p}\big\|_{\mathsf F}
\ge
\left|
\left\langle A,G^{\otimes p}-\E G^{\otimes p}\right\rangle_{\mathsf F}
\right|
=
\|\Sigma\|^{p/2}
\left|\xi_1^p-\E\xi_1^p\right|.
\]
Since \(\|\xi_1^p-\E\xi_1^p\|_{L_{2s}}\asymp_p s^{p/2}\) for \(s\ge1\),
\begin{equation}\label{eq:gaussian-W1-tail-scale}
\left\|
\big\|G^{\otimes p}-\E G^{\otimes p}\big\|_{\mathsf F}
\right\|_{L_{2s}}
\gtrsim_p
\|\Sigma\|^{p/2}s^{p/2}.
\end{equation}

For the trace scale, recall that by Wick's formula \(\E G^{\otimes p}=0\)
when \(p\) is odd, while for \(p\) even \(\E G^{\otimes p}\) is a sum of
\((p-1)!!\) tensors, each obtained by permuting the legs of
\(\Sigma^{\otimes p/2}\) and therefore of Frobenius norm
\(\|\Sigma\|_{\mathsf F}^{p/2}\). Hence, with \(b_p:=(p-1)!!\),
\begin{equation}\label{eq:gaussian-mean-tensor-upper}
\big\|\E G^{\otimes p}\big\|_{\mathsf F}
\le
b_p\|\Sigma\|_{\mathsf F}^{p/2}.
\end{equation}
Since \(\|x^{\otimes p}\|_{\mathsf F}=\|x\|_2^p\), orthogonality of the
mean and the fluctuation in \((\mathbb R^d)^{\otimes p}\) together with
Jensen's inequality gives
\begin{align}
\E\big\|G^{\otimes p}-\E G^{\otimes p}\big\|_{\mathsf F}^2
&=
\E\|G\|_2^{2p}
-
\big\|\E G^{\otimes p}\big\|_{\mathsf F}^2\nonumber\\
&\ge
\left(\E\|G\|_2^2\right)^p
-
b_p^2\|\Sigma\|_{\mathsf F}^p
=
\tr(\Sigma)^p-b_p^2\|\Sigma\|_{\mathsf F}^p.
\label{eq:gaussian-W1-L2-identity}
\end{align}
By \(\|\Sigma\|_{\mathsf F}^2\le\|\Sigma\|\tr(\Sigma)\),
\[
b_p^2\|\Sigma\|_{\mathsf F}^p
\le
b_p^2
\tr(\Sigma)^p
\left(
\frac{\|\Sigma\|}{\tr(\Sigma)}
\right)^{p/2}.
\]
Set \(\kappa_p:=(2b_p^2)^{2/p}\). If \(\tr(\Sigma)\ge\kappa_p\|\Sigma\|\),
the right-hand side is at most \(\tfrac12\tr(\Sigma)^p\), so
\eqref{eq:gaussian-W1-L2-identity} and \(2s\ge2\) yield
\begin{equation}\label{eq:gaussian-W1-trace-scale}
\left\|
\big\|G^{\otimes p}-\E G^{\otimes p}\big\|_{\mathsf F}
\right\|_{L_{2s}}
\ge
\left\|
\big\|G^{\otimes p}-\E G^{\otimes p}\big\|_{\mathsf F}
\right\|_{L_2}
\ge
2^{-1/2}\tr(\Sigma)^{p/2}.
\end{equation}
If instead \(\tr(\Sigma)<\kappa_p\|\Sigma\|\), then, since \(s\ge1\),
\[
\tr(\Sigma)^{p/2}
\le
\kappa_p^{p/2}\|\Sigma\|^{p/2}
\le
\kappa_p^{p/2}s^{p/2}\|\Sigma\|^{p/2},
\]
so the trace scale is already controlled by
\eqref{eq:gaussian-W1-tail-scale}. In either case, combining
\eqref{eq:gaussian-W1-tail-scale} and
\eqref{eq:gaussian-W1-trace-scale} and squaring proves
\eqref{eq:gaussian-W1-lower}.

\medskip
\textbf{Step 2: the two-sided Latała's estimate.} We use
\eqref{eq:Latala-square-function}, which is an equivalence, and bound the
supremum from below by evaluating it at two admissible values of \(s\).

Take first \(s:=\max\{1,r/N\}\). If \(r\le N\), then \(s=1\) and the
corresponding term equals \(N\|W_1\|_{L_1}\). If \(r>N\), then
\(s=r/N\) and the corresponding term equals
\[
N\left(\frac Nr\right)^{N/r}\|W_1\|_{L_{r/N}}
\ge
e^{-1/e}N\|W_1\|_{L_1},
\]
where we used \(\inf_{0<x\le1}x^x=e^{-1/e}\) and monotonicity of
\(L_s\)-norms. In both cases, \eqref{eq:gaussian-W1-lower} with \(s=1\)
gives a lower bound \(\gtrsim_p N\tr(\Sigma)^p\).

Taking instead \(s:=r\), and using \(N\ge1\) together with
\(\max_{r\ge1}r^{-1}\log r=1/e\), we obtain
\[
\left(\frac Nr\right)^{1/r}\|W_1\|_{L_r}
\ge
e^{-1/e}\|W_1\|_{L_r}
\gtrsim_p
r^p\|\Sigma\|^p.
\]
Consequently,
\[
\bigg\|\sum_{i=1}^N W_i\bigg\|_{L_r}
\gtrsim_p
N\tr(\Sigma)^p+r^p\|\Sigma\|^p,
\]
and, taking square roots with \(r=q/2\),
\[
\left\|
\bigg(
\sum_{i=1}^N
\big\|G_i^{\otimes p}-\E G^{\otimes p}\big\|_{\mathsf F}^2
\bigg)^{1/2}
\right\|_{L_q}
\gtrsim_p
\sqrt N\,\tr(\Sigma)^{p/2}
+
\|\Sigma\|^{p/2}q^{p/2}.
\]

\medskip
\textbf{Step 3: the range \(1\le q<2\).} The upper bound follows from the
case \(q=2\) by monotonicity. For the lower bound, note that
\[
V:=
\bigg(
\sum_{i=1}^N
\big\|G_i^{\otimes p}-\E G^{\otimes p}\big\|_{\mathsf F}^2
\bigg)^{1/2}
=
\big\|
\left(G_i^{\otimes p}-\E G^{\otimes p}\right)_{i=1}^N
\big\|_{\ell_2^N((\mathbb R^d)^{\otimes p})},
\]
and that
\(\left(G_i^{\otimes p}-\E G^{\otimes p}\right)_{i=1}^N\) is a
Hilbert-space-valued Gaussian chaos of order at most \(p\) in the
Gaussian vector \((G_1,\ldots,G_N)\). Finite-chaos moment comparison
\cite[Theorem~5.10]{Janson1997GaussianHilbertSpaces} therefore gives
\[
\|V\|_{L_2}
\lesssim_p
\|V\|_{L_1}
\le
\|V\|_{L_q}.
\]
Since the right-hand side of \eqref{eq:master-diagonal-g} at \(q=2\) is
comparable, up to constants depending only on \(p\), to its value at
\(q\in[1,2)\), the lower bound extends to \(1\le q<2\). This completes
the proof.
\end{proof}

\subsection{Weak-moment estimate}\label{subsec:weak-moment-estimate}

The proof of Proposition \ref{prop:master-scalar-projections} combines Lata{\l}a's moment estimate for sums of independent
random variables with weak moment bounds for a single centered tensor
power.

\begin{lemma}[{\cite[Corollary~2 and Remark~2]{latala1997estimation}}]
\label{lem:latala-iid-centered-sum}
Let \(Y_1,\ldots,Y_N\) be independent copies of a centered real-valued
random variable \(Y\). Then, for every \(q\ge2\) such that \(Y\in L_q\),
\begin{equation}\label{eq:latala-iid-centered-sum}
\bigg\|
\sum_{i=1}^N Y_i
\bigg\|_{L_q}
\asymp
\sup_{\max\{2,q/N\}\le s\le q}
\frac qs
\left(\frac Nq\right)^{1/s}
\|Y\|_{L_s}.
\end{equation}
\end{lemma}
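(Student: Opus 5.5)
Since the statement is a quotation of \cite[Corollary~2 and Remark~2]{latala1997estimation}, the plan is to re-derive it from Lata{\l}a's main theorem for symmetric summands. The derivation has three steps: symmetrization, evaluation of a single-variable Orlicz-type functional, and optimization over the order \(s\).

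\textbf{Step 1: symmetrization.} For independent centered \(Y_i\) and \(q\ge1\), the standard symmetrization inequalities give
\[
\tfrac12\bigg\|\sum_{i} \varepsilon_iY_i\bigg\|_{L_q}\le\bigg\|\sum_i Y_i\bigg\|_{L_q}\le 2\bigg\|\sum_i \varepsilon_iY_i\bigg\|_{L_q},
\]
where \((\varepsilon_i)\) are independent Rademacher signs, independent of the \(Y_i\). The variables \(\varepsilon_iY_i\) are i.i.d. and symmetric, and \(\|\varepsilon Y\|_{L_s}=\|Y\|_{L_s}\) for every \(s\). It therefore suffices to prove \eqref{eq:latala-iid-centered-sum} for symmetric \(Y\).

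\textbf{Step 2: Lata{\l}a's theorem for symmetric summands.} For independent symmetric \(Y_i\), Lata{\l}a's theorem gives
\[
\bigg\|\sum_i Y_i\bigg\|_{L_q}\asymp \inf\Big\{t>0:\ \sum_i\log \E\big|1+Y_i/t\big|^q\le q\Big\}.
\]
In the i.i.d. case the constraint becomes \(\E|1+Y/t|^q\le e^{q/N}\). I would then use the two-sided estimate, valid for symmetric \(Y\) with constants independent of \(q\),
\[
\E|1+Y/t|^q \;\approx\; 1+\sum_{2\le k\le q,\ k \text{ even}}\binom qk\frac{\E|Y|^k}{t^k}.
\]
This follows by comparing \(|1+x|^q\) with its even Taylor part and using symmetry to kill the odd terms; non-integer \(q\) is handled by interpolating between neighbouring even moments. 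This single-variable estimate is the step I expect to be the main obstacle, because all constants must be uniform in \(q\).

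\textbf{Step 3: solving for \(t\) and optimizing over \(s\).} Using \(\binom qk\approx(q/k)^k\) up to factors \(C^k\), the defining condition for \(t\) becomes, up to constants,
\[
\sup_{2\le s\le q}\Big(\frac{q\|Y\|_{L_s}}{st}\Big)^s\lesssim \log\big(e^{q/N}\big)\wedge 1\;\asymp\;\frac qN\wedge1.
\]
Solving this for \(t\) gives
\[
t\asymp\sup_{2\le s\le q}\frac qs\Big(\frac Nq\Big)^{1/s}\|Y\|_{L_s}
\]
when \(q\le N\).

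When \(q>N\), I would show that the orders \(s<q/N\) are dominated by the order \(s=q/N\). Here one uses that \(s\mapsto s^{-1}(N/q)^{1/s}\) increases on that range, together with monotonicity of \(s\mapsto\|Y\|_{L_s}\). This restricts the supremum to \(\max\{2,q/N\}\le s\le q\). Combining Steps 1–3 yields \eqref{eq:latala-iid-centered-sum}.
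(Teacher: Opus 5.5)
The paper gives no proof of this lemma; it quotes Lata{\l}a's Corollary~2 and Remark~2. Your skeleton is a natural way to obtain the i.i.d.\ formula from Lata{\l}a's theorem: symmetrize, apply the theorem for symmetric summands, estimate \(\E|1+Y/t|^q\), then optimize over the order. Step~1 is fine, but two later steps fail as written.

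\emph{Step 2 drops the top-order moment.} When \(q\) is not an even integer, a sum over even \(k\le q\) cannot control \(\E|1+Y/t|^q\). Take \(q=3\) and \(Y=\pm M\) with probability \(\varepsilon/2\) each, \(Y=0\) otherwise. For \(m=M/t\ge1\) one has \(\E|1+Y/t|^3=1+\varepsilon(m^3+3m-1)\), while your right-hand side is \(1+3\varepsilon m^2\), and no rescaling of \(t\) reconciles the two as \(m\to\infty\). This is not cosmetic. If \(N\varepsilon\ll1\), then \(\|\sum_iY_i\|_{L_3}\asymp(N\varepsilon)^{1/3}M\), which is exactly the endpoint term \(s=q\) in \eqref{eq:latala-iid-centered-sum}. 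The \(k=2\) term alone only gives \((N\varepsilon)^{1/2}M\). Interpolating between neighbouring even moments cannot fix this, since the even moment above \(q\) may be infinite.

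The estimate must carry the term \(\E|Y|^q/t^q\). For \(|x|\ge1/2\), \(\frac12(|1+x|^q+|1-x|^q)\le(3|x|)^q\). For \(|x|<1/2\), the Taylor terms of order \(>q\) have \(|\binom qk|\le1\), so they contribute at most \(2|x|^q\). Non-even interior orders then follow from log-convexity of \(s\mapsto\E|Y|^s\). You should also state that the comparison holds with the constants inside, i.e.\ after rescaling \(t\), and not up to a multiplicative factor in front of \(\E|1+Y/t|^q\). For \(q\le N\) the constraint \(\E|1+Y/t|^q\le e^{q/N}\) is a perturbation of \(1\) of size \(q/N\), and such a factor would destroy it.

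\emph{Step 3 mishandles \(q>N\).} With \(S\) the series from Step~2, the constraint \(\log(1+S)\lesssim q/N\) allows \(S\) up to \(e^{q/N}-1\). This is \(\asymp q/N\) for \(q\le N\) but \(\asymp e^{q/N}\) for \(q>N\), so the cap ``\(\wedge1\)'' is wrong. For Rademacher \(Y\) and \(q\gg N\), the capped condition forces \(t\gtrsim q/2\) through the order \(s=2\), whereas the true value is \(\asymp N\).

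The monotonicity claim is also false. Since \(\frac{d}{ds}\log\bigl(s^{-1}(N/q)^{1/s}\bigr)=-\frac1s+\frac{\log(q/N)}{s^2}\), this function increases only for \(s<\log(q/N)\). For \(q/N=e^{10}\), its value at \(s=10\) is about \(800\) times its value at \(s=q/N\), so orders below \(q/N\) are not dominated by \(s=q/N\).

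With the correct threshold, order \(s\) forces \(t\gtrsim\frac qs e^{-q/(Ns)}\|Y\|_{L_s}\). It is \(s\mapsto s^{-1}e^{-q/(Ns)}\) that increases on \((0,q/N]\). Together with monotonicity of \(s\mapsto\|Y\|_{L_s}\), this lets you discard \(s<q/N\). For \(s\ge q/N\), both \(e^{-q/(Ns)}\) and \((N/q)^{1/s}\) lie in \([e^{-1},1]\), which gives \eqref{eq:latala-iid-centered-sum}.
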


It remains to control the \(q\)-th moment of the centered tensor,
\[
\left\langle
A,X^{\otimes p}-\E X^{\otimes p}
\right\rangle_{\mathsf F},
\qquad
\|A\|_{\mathsf F}=1.
\]
The required estimates are given by the following lemma, whose proof is given in Subsection \ref{subsec:subgaussian-directional-tensor-moments}.

\begin{lemma}[Weak moments of a single tensor power]
\label{lem:subgaussian-directional-tensor-moments}
Let \(X\in\mathbb R^d\) be a centered sub-Gaussian random vector with
covariance matrix \(\Sigma\) and sub-Gaussian constant \(K\), and let
\(p\ge2\) be an integer. Then, for every \(q\ge1\),
\[
\sup_{\|A\|_{\mathsf F}=1}
\left\|
\left\langle
A,X^{\otimes p}-\E X^{\otimes p}
\right\rangle_{\mathsf F}
\right\|_{L_q}
\lesssim_p
K^p \begin{cases}
\|\Sigma\|_{\mathsf F}^{p/2}+\|\Sigma\|^{p/2}q^{p/2},
& p \text{ even},\\[3pt]
\|\Sigma\|_{\mathsf F}^{(p-1)/2}\|\Sigma\|^{1/2}\sqrt q+\|\Sigma\|^{p/2}q^{p/2},
& p \text{ odd}.
\end{cases}
\]
 Moreover, if \(X\) is Gaussian, then, for every \(q\ge1\),
\[
\sup_{\|A\|_{\mathsf F}=1}
\left\|
\left\langle
A,X^{\otimes p}-\E X^{\otimes p}
\right\rangle_{\mathsf F}
\right\|_{L_q}
\asymp_p
\begin{cases}
\|\Sigma\|_{\mathsf F}^{p/2-1}\|\Sigma\| q
+
\|\Sigma\|^{p/2}q^{p/2},
& p \text{ even},\\[3pt]
\|\Sigma\|_{\mathsf F}^{(p-1)/2}\|\Sigma\|^{1/2}\sqrt q
+
\|\Sigma\|^{p/2}q^{p/2},
& p \text{ odd},
\end{cases}
\]
and 
\[
\sup_{\|A\|_{\mathsf F}=1}
\left\|
\left\langle
A,X^{\otimes p}
\right\rangle_{\mathsf F}
\right\|_{L_q}
\asymp_p
\begin{cases}
\|\Sigma\|_{\mathsf F}^{p/2}
+
\|\Sigma\|^{p/2}q^{p/2},
& p \text{ even},\\[3pt]
\|\Sigma\|_{\mathsf F}^{(p-1)/2}\|\Sigma\|^{1/2}\sqrt q
+
\|\Sigma\|^{p/2}q^{p/2},
& p \text{ odd}.
\end{cases}
\]
\end{lemma}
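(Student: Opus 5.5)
The proof has three parts: the sub-Gaussian upper bound, the two Gaussian upper bounds, and the matching Gaussian lower bounds. All three rest on Theorem~\ref{thm:main2} and Proposition~\ref{prop:Hilbert-Gaussian-tensor-Lq}, supplemented by explicit test tensors.

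\emph{Sub-Gaussian upper bound.} By \eqref{eq:convex-dominatation}, the vector \(Z:=(CK)^{-1}X\) is centered and satisfies \(Z\preceq_{\mathrm{cx}}G_\Sigma\). Applying Theorem~\ref{thm:main2} with \(\mathcal H=\mathbb R\) and \(T=T_A\), where \(\|T_A\|_{\mathsf F}=\|A\|_{\mathsf F}=1\), gives \(\|\langle A,X^{\otimes p}\rangle_{\mathsf F}\|_{L_q}\lesssim_p K^p\Psi_p(q,\Sigma)\). Centering costs at most a factor \(2\), since \(\|V-\E V\|_{L_q}\le 2\|V\|_{L_q}\).

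It remains to simplify \(\Psi_p\). The summands \(\|\Sigma\|_{\mathsf F}^{(p-j)/2}\|\Sigma\|^{j/2}q^{j/2}\) form a geometric sequence in \(j\), so their sum is comparable to the sum of the two extreme terms. The top term is \(j=p\). The bottom term is \(j=0\) for even \(p\) and \(j=1\) for odd \(p\). This yields exactly the stated sub-Gaussian rates.

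\emph{Gaussian upper bounds.} The noncentered Gaussian upper bound is the first estimate of Proposition~\ref{prop:Hilbert-Gaussian-tensor-Lq}, simplified in the same way.

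For the centered Gaussian bound I use the second estimate, with \(\Psi_p^\circ\). For odd \(p\) this already matches the claim. For even \(p\) it gives the bottom term \(\|\Sigma\|_{\mathsf F}^{p/2-1}\|\Sigma\|q\) (from \(j=2\)), but this is not sharp as \(\Psi^\circ\) is stated. The reason is that the chaos of order \(j\) does not have norm \(\|\Sigma\|_{\mathsf F}^{(p-j)/2}\|\Sigma\|^{j/2}\). Its kernel is a contraction of \(A\) against \((p-j)/2\) copies of \(\Sigma\). Bounding that contraction in Frobenius norm by \(\|\Sigma\|_{\mathsf F}^{(p-j)/2}\), and then applying chaos moment bounds of the form \(q^{j/2}\|\cdot\|\), gives each summand \(\|\Sigma\|_{\mathsf F}^{(p-j)/2}\|\Sigma\|^{j/2}q^{j/2}\). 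So for even \(p\) the lowest nonconstant chaos, \(j=2\), contributes \(\|\Sigma\|_{\mathsf F}^{p/2-1}\|\Sigma\|\,q\). The geometric interpolation between \(j=2\) and \(j=p\) then gives the stated centered even bound. I will cite Proposition~\ref{prop:Hilbert-Gaussian-tensor-Lq} with \(\Psi_p^\circ\) and do the two-endpoint reduction: the sum over \(j\ge2\) is comparable to the sum of its \(j=2\) and \(j=p\) terms.

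\emph{Gaussian lower bounds.} These come from explicit test tensors.
\begin{itemize}
\item \emph{Top scale.} Take \(A=v_1^{\otimes p}\) with \(v_1\) a top eigenvector of \(\Sigma\). Then \(\langle A,G^{\otimes p}\rangle_{\mathsf F}=\|\Sigma\|^{p/2}\xi^p\) with \(\xi\sim\mcN(0,1)\), and its \(L_q\) norm, centered or not, is \(\asymp_p\|\Sigma\|^{p/2}q^{p/2}\).
\item \emph{Noncentered even bottom scale.} Take \(A=\Sigma^{\otimes p/2}/\|\Sigma\|_{\mathsf F}^{p/2}\). Then \(\langle A,G^{\otimes p}\rangle_{\mathsf F}=\langle\Sigma,GG^\top\rangle_{\mathsf F}^{p/2}/\|\Sigma\|_{\mathsf F}^{p/2}\), whose mean is at least \(\|\Sigma\|_{\mathsf F}^{p/2}\) by Jensen, since \(\E\langle \Sigma,GG^\top\rangle_{\mathsf F}=\|\Sigma\|_{\mathsf F}^2\).
\item \emph{Centered even intermediate scale.} Take \(A=\Sigma^{\otimes(p/2-1)}\otimes v_1^{\otimes2}\), normalized. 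The quantity becomes \(\|\Sigma\|_{\mathsf F}^{-(p/2-1)}\langle\Sigma,GG^\top\rangle_{\mathsf F}^{p/2-1}\|\Sigma\|\xi^2\). The first factor concentrates around \(\|\Sigma\|_{\mathsf F}^{p/2-1}\) at least on a constant-probability event. Combined with the \(q\)-th moment of \(\xi^2-1\), this should give \(\gtrsim\|\Sigma\|_{\mathsf F}^{p/2-1}\|\Sigma\|q\). Independence is the issue here, so I will instead project onto the second chaos via hypercontractivity.
\item \emph{Odd intermediate scale.} Take \(A=\Sigma^{\otimes(p-1)/2}\otimes v_1\), normalized. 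Its first-chaos projection is \(c_p\|\Sigma\|_{\mathsf F}^{(p-1)/2}\langle v_1,G\rangle\) up to lower-order corrections, giving \(\sqrt q\,\|\Sigma\|_{\mathsf F}^{(p-1)/2}\|\Sigma\|^{1/2}\).
\end{itemize}
For the chaos-projection steps I use the fact that, for finite chaos, \(\|F\|_{L_q}\ge\|\pi_jF\|_{L_q}/C_p\). This holds because projections onto chaoses are bounded on \(L_q\) for \(q\ge2\) in finite-degree chaos by hypercontractivity. I expect this lower-bound extraction, with the exact Wick combinatorics of the contracted kernels, to be the main obstacle.
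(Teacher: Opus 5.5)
\paragraph{Comparison with the paper's proof.} Your sub-Gaussian and Gaussian upper bounds follow the paper: convex domination feeds into Theorem~\ref{thm:main2}, the Gaussian case uses Proposition~\ref{prop:Hilbert-Gaussian-tensor-Lq}, and the geometric-progression reduction finishes. Your worry that $\Psi_p^\circ$ is ``not sharp'' is unfounded. For even $p$, its lowest term is already the $j=2$ term $\|\Sigma\|_{\mathsf F}^{p/2-1}\|\Sigma\|q$. Your top-scale and noncentered even lower bounds also coincide with the paper's.

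The real difference lies in the intermediate-scale Gaussian lower bounds. The paper avoids chaos projections entirely. It replaces $\Sigma$ by $B_\perp=\operatorname{diag}(0,\lambda_2,\ldots,\lambda_d)/\sigma_\perp$, so that $Q_\perp=X^\top B_\perp X$ is independent of $\xi_1$.
\begin{itemize}
\item In the even case, conditioning on $\xi_1$ leaves $\|\Sigma\|(\E Q_\perp^{k-1})(\xi_1^2-1)$.
\item In the odd case, the $L_q$ norm of $\xi_1Q_\perp^k$ factorizes.
\item The case $\sigma_\perp<\tfrac12\|\Sigma\|_{\mathsf F}$ is absorbed by the top scale, since then $\|\Sigma\|_{\mathsf F}\le\tfrac{2}{\sqrt3}\|\Sigma\|$.
\end{itemize}
Your route keeps the full $\Sigma$ and extracts the lowest nonconstant chaos. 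This is more systematic and needs no case split. However, it rests on $\|\pi_jF\|_{L_q}\le C_p\|F\|_{L_q}$ with $C_p$ independent of $q$, whereas the paper needs only independence and Jensen.

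\paragraph{The step that needs repair.} Your justification of that inequality does not work. Hypercontractivity only gives $\|\pi_jF\|_{L_q}\le(q-1)^{j/2}\|\pi_jF\|_{L_2}\le(q-1)^{j/2}\|F\|_{L_q}$. The factor $q^{j/2}$ would destroy exactly the $q$-dependence you are trying to capture.

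The inequality is nevertheless true for every $F$ of degree at most $p$ and every $q\ge1$, for a different reason. The Ornstein--Uhlenbeck semigroup is an $L_q$-contraction by Mehler's formula and Jensen, and $P_tF=\sum_{j\le p}e^{-jt}\pi_jF$. Inverting this Vandermonde system at $t=0,1,\ldots,p$ expresses each $\pi_j$ as a fixed linear combination, depending only on $p$, of contractions. This also covers $1\le q<2$.

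With this repair, the ``Wick combinatorics'' you feared is harmless, because no cancellation occurs. Write $Q=\sum_i\lambda_i^2\xi_i^2$ and use $\pi_2F=\tfrac12\sum_{i,l}\E[\partial_{il}F](\xi_i\xi_l-\delta_{il})$.
\begin{itemize}
\item \emph{Even case.} Parity kills the off-diagonal terms, and every term of $\partial_{11}(\xi_1^2Q^{k-1})$ is nonnegative. Hence the coefficient of $\xi_1^2-1$ in $\pi_2$ of your normalized test variable is at least $\lambda_1\E Q^{k-1}/\|\Sigma\|_{\mathsf F}^{k-1}\ge\|\Sigma\|\,\|\Sigma\|_{\mathsf F}^{k-1}$. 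Conditioning on $\xi_1$ then gives the bound $\gtrsim_p\|\Sigma\|\,\|\Sigma\|_{\mathsf F}^{k-1}q$.
\item \emph{Odd case.} The first-chaos projection of $\xi_1Q^k$ is exactly $\E[\partial_1(\xi_1Q^k)]\,\xi_1$, with $\E[\partial_1(\xi_1Q^k)]\ge\E Q^k\ge\|\Sigma\|_{\mathsf F}^{2k}$. So no ``lower-order corrections'' arise.
\end{itemize}
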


\begin{proof}[Proof of Proposition~\ref{prop:master-scalar-projections}]
Throughout the proof, set
\[
Z_i:=X_i^{\otimes p}-\E X^{\otimes p},
\qquad i=1,\ldots,N,
\qquad
Z:=X^{\otimes p}-\E X^{\otimes p}.
\]
We first consider \(q\ge2\). For \(\alpha\ge0\), define
\[
\mathcal L_\alpha(N,q)
:=
\sup_{\max\{2,q/N\}\le s\le q}
\frac qs
\left(
\frac Nq
\right)^{1/s}
s^\alpha .
\]
An elementary optimization gives
\begin{equation}
\mathcal L_0(N,q)
\asymp
\sqrt{N(q\land N)},
\qquad
\mathcal L_{1/2}(N,q)
\asymp
\sqrt{Nq},
\label{eq:elementary-latala-optimization}
\end{equation}
and, for every \(\alpha\ge1\),
\begin{equation}
\mathcal L_\alpha(N,q)
\asymp_\alpha
\sqrt{Nq}+q^\alpha .
\label{eq:elementary-latala-optimization-power}
\end{equation}
We shall also use the fact that, for nonnegative \(f_s\) and
\(g_s\),
\begin{equation}
\sup_s(f_s+g_s)\asymp\sup_s f_s+\sup_s g_s.
\label{eq:sup-additivity}
\end{equation}

\medskip
\noindent\textbf{Sub-Gaussian upper bound.}
\medskip

Recall that
\[
\rho_p(\Sigma)
=
\begin{cases}
\|\Sigma\|_{\mathsf F}^{p/2},
& p \text{ even},\\[3pt]
\|\Sigma\|_{\mathsf F}^{(p-1)/2}\|\Sigma\|^{1/2},
& p \text{ odd}.
\end{cases}
\]
By Lemma~\ref{lem:latala-iid-centered-sum}, followed by interchanging
the suprema over \(A\) and \(s\), we have
\begin{align*}
\sup_{\|A\|_{\mathsf F}=1}
\left\|
\sum_{i=1}^N \langle A,Z_i\rangle_{\mathsf F}
\right\|_{L_q}
&\asymp
\sup_{\max\{2,q/N\}\le s\le q}
\frac{q}{s}
\left(\frac{N}{q}\right)^{1/s}
\sup_{\|A\|_{\mathsf F}=1}
\left\|
\langle A,Z\rangle_{\mathsf F}
\right\|_{L_s}.
\end{align*}
Applying the sub-Gaussian bound in
Lemma~\ref{lem:subgaussian-directional-tensor-moments} and
\eqref{eq:sup-additivity}, we therefore obtain
\begin{align*}
\sup_{\|A\|_{\mathsf F}=1}
\left\|
\sum_{i=1}^N \langle A,Z_i\rangle_{\mathsf F}
\right\|_{L_q}
\lesssim_p
K^p
\begin{cases}
\rho_p(\Sigma)\mathcal L_0(N,q)
+
\|\Sigma\|^{p/2}\mathcal L_{p/2}(N,q),
& p \text{ even},\\[3pt]
\rho_p(\Sigma)\mathcal L_{1/2}(N,q)
+
\|\Sigma\|^{p/2}\mathcal L_{p/2}(N,q),
& p \text{ odd}.
\end{cases}
\end{align*}
Since \(\|\Sigma\|^{p/2}\le\rho_p(\Sigma)\),
\eqref{eq:elementary-latala-optimization} and
\eqref{eq:elementary-latala-optimization-power} imply
\begin{align}
\sup_{\|A\|_{\mathsf F}=1}
\bigg\|
\sum_{i=1}^N\langle A,Z_i\rangle_{\mathsf F}
\bigg\|_{L_q}
\lesssim_p
K^p
\begin{cases}
\rho_p(\Sigma)\sqrt{N(q\land N)}
+
\|\Sigma\|^{p/2}q^{p/2},
& p \text{ even},\\[3pt]
\rho_p(\Sigma)\sqrt{Nq}
+
\|\Sigma\|^{p/2}q^{p/2},
& p \text{ odd}.
\end{cases}
\label{eq:subgaussian-scalar-sum}
\end{align}
Indeed, the additional term \(\|\Sigma\|^{p/2}\sqrt{Nq}\) arising from
\eqref{eq:elementary-latala-optimization-power} is absorbed as follows.
When \(p\) is even, it is absorbed by the first term if \(q\le N\), and by
\(\|\Sigma\|^{p/2}q^{p/2}\) if \(q>N\). When \(p\) is odd, it is absorbed
by \(\rho_p(\Sigma)\sqrt{Nq}\).

\medskip
\noindent\textbf{Gaussian two-sided bound.}
\medskip

Suppose now that \(X\) is Gaussian. Repeating the preceding argument
with the Gaussian two-sided estimate in
Lemma~\ref{lem:subgaussian-directional-tensor-moments}, we obtain
\begin{align*}
\sup_{\|A\|_{\mathsf F}=1}
\bigg\|
\sum_{i=1}^N\langle A,Z_i\rangle_{\mathsf F}
\bigg\|_{L_q}
\asymp_p
\begin{cases}
\rho_{p,G}(\Sigma)\mathcal L_1(N,q)
+
\|\Sigma\|^{p/2}\mathcal L_{p/2}(N,q),
& p \text{ even},\\[3pt]
\rho_{p,G}(\Sigma)\mathcal L_{1/2}(N,q)
+
\|\Sigma\|^{p/2}\mathcal L_{p/2}(N,q),
& p \text{ odd},
\end{cases}
\end{align*}
where
\[
\rho_{p,G}(\Sigma)
=
\begin{cases}
\|\Sigma\|_{\mathsf{F}}^{p/2-1}\|\Sigma\|, & p \text{ even},\\[3pt]
\|\Sigma\|_{\mathsf{F}}^{(p-1)/2}\|\Sigma\|^{1/2}, & p \text{ odd}.
\end{cases}
\]
Since \(\|\Sigma\|^{p/2}\le\rho_{p,G}(\Sigma)\),
\eqref{eq:elementary-latala-optimization} and
\eqref{eq:elementary-latala-optimization-power} give
\begin{align}
\sup_{\|A\|_{\mathsf F}=1}
\bigg\|
\sum_{i=1}^N\langle A,Z_i\rangle_{\mathsf F}
\bigg\|_{L_q}
\asymp_p
\begin{cases}
\rho_{p,G}(\Sigma)\left(\sqrt{Nq}+q\right)
+
\|\Sigma\|^{p/2}q^{p/2},
& p \text{ even},\\[3pt]
\rho_{p,G}(\Sigma)\sqrt{Nq}
+
\|\Sigma\|^{p/2}q^{p/2},
& p \text{ odd}.
\end{cases}
\label{eq:gaussian-scalar-sum}
\end{align}

It remains to consider \(1\le q<2\). The sub-Gaussian and Gaussian
upper bounds follow from moment monotonicity and the corresponding
estimates at \(q=2\), since their right-hand sides at \(q=2\) are
bounded, up to constants depending only on \(p\), by those at \(q\).

For the Gaussian lower bound, for every \(A\), the random variable
\(\sum_{i=1}^N \langle A,Z_i\rangle_{\mathsf F}\) is a polynomial of
degree at most \(p\) in Gaussian random variables. The finite-chaos
moment comparison
\cite[Theorem~5.10]{Janson1997GaussianHilbertSpaces} therefore gives
\[
\bigg\|
\sum_{i=1}^N \langle A,Z_i\rangle_{\mathsf F}
\bigg\|_{L_2}
\lesssim_p
\bigg\|
\sum_{i=1}^N \langle A,Z_i\rangle_{\mathsf F}
\bigg\|_{L_1}
\le
\bigg\|
\sum_{i=1}^N \langle A,Z_i\rangle_{\mathsf F}
\bigg\|_{L_q}.
\]
Taking the supremum over \(\|A\|_{\mathsf F}=1\), applying the Gaussian
lower bound at \(q=2\), and comparing its right-hand side with that at
\(q\) proves the desired lower bound for \(1\le q<2\). This completes
the proof.
\end{proof}

\subsection{Sharpness and the parity effect}\label{subsec:sharpness}

In this subsection, we show the sharpness of the sub-Gaussian bound
in Theorem~\ref{thm:main1}, and explain why the sub-Gaussian and Gaussian
intermediate covariance scales coincide at odd orders but can differ at
even orders. The following example shows that, for every fixed integer
\(p\ge2\), all three terms in the sub-Gaussian bound are necessary, up to
constants depending only on \(p\), and reveals the mechanism behind this
parity effect.

The construction combines two independent blocks. A Gaussian block supplies
the trace and large-deviation scales, as well as the intermediate scale at
odd orders. At even orders, its intermediate contribution is only
\(\rho_{p,G}\); the larger scale \(\rho_p\) is instead produced by a
random-radius Rademacher block, whose common bounded radius synchronizes
fluctuations across the eigendirections. This block is an anisotropic version
of the construction in \cite[Proposition~2.3]{puchkin2025sharper}.

\begin{example}
\label{example:subgaussian_tensor_sharpness}
Fix an integer \(p\ge2\). Let
\[
\lambda_1\ge\cdots\ge\lambda_d>0,
\qquad
\Lambda:=\operatorname{diag}(\lambda_1,\ldots,\lambda_d),
\qquad
\lambda_1=\|\Lambda\|.
\]
Let \(G\sim\mcN(0,\Lambda)\), let
\(\varepsilon\in\{\pm1\}^d\) have independent Rademacher entries, and let
\(R\), independent of \(G\) and \(\varepsilon\), take the values \(0\) and
\(\sqrt2\), each with probability \(1/2\). Define
\[
Y:=R\Lambda^{1/2}\varepsilon,
\qquad
X:=(G,Y)\in\mathbb R^{2d}.
\]
Standard Gaussian and Rademacher sub-Gaussian estimates, together with the
boundedness of \(R\), show that \(X\) satisfies \eqref{eq:def-subg} with
\(K\asymp1\). Since \(\E R^2=1\), \(X\) is centered and has covariance matrix
\[
\Sigma
=
\begin{bmatrix}
\Lambda&0\\
0&\Lambda
\end{bmatrix}.
\]
Consequently,
\begin{equation}\label{eq:subgaussian_tensor_covariance_relation}
\tr(\Sigma)=2\tr(\Lambda),
\qquad
\|\Sigma\|_{\mathsf F}=\sqrt2 \|\Lambda\|_{\mathsf F},
\qquad
\|\Sigma\|=\|\Lambda\|.
\end{equation}

For \(i=1,\ldots,N\), let \((G_i,R_i,\varepsilon_i)\) be independent
copies of \((G,R,\varepsilon)\), and set
\[
Y_i:=R_i\Lambda^{1/2}\varepsilon_i,
\qquad
X_i:=(G_i,Y_i).
\]
Then \(X_1,\ldots,X_N\) are i.i.d. with the same distribution as \(X\). Let
\[
S_p
:=
\frac1N\sum_{i=1}^N X_i^{\otimes p}-\E X^{\otimes p}.
\]
We show that, for every \(q\ge1\),
\begin{equation}\label{eq:subgaussian_tensor_sharpness}
\left(\E\|S_p\|_{\mathsf F}^q\right)^{1/q}
\gtrsim_p
\frac{\tr(\Sigma)^{p/2}}{\sqrt N}
+
\rho_p(\Sigma)\sqrt{\frac{q\land N}{N}}
+
\|\Sigma\|^{p/2}\frac{q^{p/2}}{N},
\end{equation}
where \(\rho_p(\Sigma)\) is defined in Theorem \ref{thm:main1}.

Restricting \(S_p\) to the tensor block corresponding entirely to the
\(G\)-coordinates, we obtain
\[
\|S_p\|_{\mathsf F}
\ge
\bigg\|
\frac1N\sum_{i=1}^N G_i^{\otimes p}-\E G^{\otimes p}
\bigg\|_{\mathsf F}.
\]
The Gaussian lower bound in Theorem~\ref{thm:main1} therefore yields
\begin{equation}\label{eq:subgaussian_tensor_gaussian_lower}
\left(\E\|S_p\|_{\mathsf F}^q\right)^{1/q}
\gtrsim_p
\frac{\tr(\Lambda)^{p/2}}{\sqrt N}
+
\rho_{p,G}(\Lambda)\sqrt{\frac qN}
+
\|\Lambda\|^{p/2}\frac{q^{p/2}}{N}.
\end{equation}

Suppose first that \(p\) is odd. In this case,
\[
\rho_{p,G}(\Lambda)
=
\|\Lambda\|_{\mathsf F}^{(p-1)/2}\|\Lambda\|^{1/2}
=
\rho_p(\Lambda).
\]
Moreover, $\sqrt{\frac qN}
\ge
\sqrt{\frac{q\land N}{N}}$.
Thus \eqref{eq:subgaussian_tensor_sharpness} follows from
\eqref{eq:subgaussian_tensor_gaussian_lower} and
\eqref{eq:subgaussian_tensor_covariance_relation}.

Now suppose that \(p=2k\) is even. Retaining the first and third terms in
\eqref{eq:subgaussian_tensor_gaussian_lower}, we have
\begin{equation}\label{eq:subgaussian_tensor_gaussian_scales}
\left(\E\|S_p\|_{\mathsf F}^q\right)^{1/q}
\gtrsim_p
\frac{\tr(\Lambda)^{p/2}}{\sqrt N}
+
\|\Lambda\|^{p/2}\frac{q^{p/2}}{N}.
\end{equation}
It remains to obtain the intermediate fluctuation scale lower bound. Set
\[
B:=\frac{\Lambda}{\|\Lambda\|_{\mathsf F}},
\qquad
A:=B^{\otimes k}.
\]
Then
\[
\|A\|_{\mathsf F}
=
\|B\|_{\mathsf F}^k
=
1.
\]
Since $\frac1N\sum_{i=1}^N Y_i^{\otimes p}-\E Y^{\otimes p}$
is the tensor block of \(S_p\) corresponding entirely to the
\(Y\)-coordinates, we have
\begin{equation}\label{eq:subgaussian_tensor_block_test}
\|S_p\|_{\mathsf F}
\ge
\bigg\|
\frac1N\sum_{i=1}^N Y_i^{\otimes p}-\E Y^{\otimes p}
\bigg\|_{\mathsf F}
\ge
\bigg|
\bigg\langle
A,
\frac1N\sum_{i=1}^N Y_i^{\otimes p}-\E Y^{\otimes p}
\bigg\rangle_{\mathsf F}
\bigg|.
\end{equation}

Since \(Y=R\Lambda^{1/2}\varepsilon\) and every coordinate of \(\varepsilon\) takes values in \(\{\pm 1\}\),
\[
Y^\top B Y
=
\frac{R^2}{\|\Lambda\|_{\mathsf F}}
\varepsilon^\top\Lambda^2\varepsilon
=
\frac{R^2}{\|\Lambda\|_{\mathsf F}}
\sum_{j=1}^d\lambda_j^2
=
R^2\|\Lambda\|_{\mathsf F}.
\]
Consequently, recalling that \(p=2k\),
\[
\left\langle A,Y^{\otimes p}\right\rangle_{\mathsf F}
=
\left\langle B^{\otimes k},Y^{\otimes 2k}\right\rangle_{\mathsf F}
=
(Y^\top B Y)^k
=
R^{2k}\|\Lambda\|_{\mathsf F}^k.
\]
For \( i = 1, \ldots, N\), define the Rademacher random variable \(\eta_i\) by 
\[
R_i^{2k}-\E R^{2k}
=
2^{k-1}\eta_i.
\]
Then \(\eta_1,\ldots,\eta_N\) are independent, and
\[
\bigg\langle
A,
\frac1N\sum_{i=1}^N Y_i^{\otimes p}-\E Y^{\otimes p}
\bigg\rangle_{\mathsf F}
=
2^{k-1}\|\Lambda\|_{\mathsf F}^k
\frac1N\sum_{i=1}^N\eta_i.
\]
Combining this identity with
\eqref{eq:subgaussian_tensor_block_test} and taking \(L_q\)-norms, we obtain
\begin{align}
\left(\E\|S_p\|_{\mathsf F}^q\right)^{1/q}
\ge
2^{k-1}\|\Lambda\|_{\mathsf F}^k
\bigg\|
\frac1N\sum_{i=1}^N\eta_i
\bigg\|_{L_q}
\asymp_p
\|\Lambda\|_{\mathsf F}^{p/2}
\sqrt{\frac{q\land N}{N}},
\label{eq:subgaussian_tensor_middle_even}
\end{align}
where the last step follows from the two-sided moment estimate for
Rademacher sums
\cite{montgomerysmith1990rademacher}.
Combining \eqref{eq:subgaussian_tensor_gaussian_scales} and
\eqref{eq:subgaussian_tensor_middle_even}, and using
\eqref{eq:subgaussian_tensor_covariance_relation}, proves
\eqref{eq:subgaussian_tensor_sharpness} in the even case.

\end{example}

\paragraph{Why the sub-Gaussian and Gaussian bounds differ}

The distinction is already visible at order \(p=2\). At this order, the
intermediate covariance scale is determined by the largest
\(L_2\)-fluctuation of a centered quadratic form. Since
\(G^{\otimes2}-\Lambda\) is symmetric, it suffices to consider symmetric
matrices \(A\). The Gaussian fourth-moment identity gives
\[
\E
\left\langle
A,G^{\otimes2}-\Lambda
\right\rangle_{\mathsf F}^{2}
=
2\|\Lambda^{1/2}A\Lambda^{1/2}\|_{\mathsf F}^{2}
\le
2\|\Lambda\|^2\|A\|_{\mathsf F}^2.
\]
Taking \(A\) to be the orthogonal projector onto a top eigendirection of
\(\Lambda\) gives equality. Therefore,
\[
\sup_{\|A\|_{\mathsf F}\le1}
\left\|
\left\langle
A,G^{\otimes2}-\Lambda
\right\rangle_{\mathsf F}
\right\|_{L_2}
=
\sqrt2\,\|\Lambda\|.
\]

Condition~\eqref{eq:def-subg} controls linear functionals, but places no
comparable restriction on the dependence among quadratic fluctuations. For
the random-radius block \(Y\) above, take
\(B=\Lambda/\|\Lambda\|_{\mathsf F}\). Then
\[
\left\langle
B,Y^{\otimes2}-\Lambda
\right\rangle_{\mathsf F}
=
(R^2-1)\|\Lambda\|_{\mathsf F},
\]
whose \(L_2\)-norm is \(\|\Lambda\|_{\mathsf F}\). Thus \(G\) and \(Y\) have
the same covariance and both satisfy \eqref{eq:def-subg} with constants of
order one, but their quadratic fluctuations can occur on different scales.
Indeed, in the eigenbasis of \(\Lambda\),
\(Y_j^2-\E Y_j^2=(R^2-1)\lambda_j\) for every \(j\). The same random factor
therefore makes the energy fluctuations in all eigendirections have the same
sign, so they add without cancellation. The sub-Gaussian control of linear
projections does not rule out this dependence. For \(G\), by contrast, the
coordinate energies are independent, and the fourth-moment identity yields
the spectral scale \(\|\Lambda\|\).

For higher orders, the same distinction appears through the Gaussian chaos
decomposition and the contractions used above. When \(p=2k\), centering
removes the zeroth Gaussian chaos, which is the sum of all complete Wick
pairings. The lowest nonzero chaos has order two and is formed from \(k-1\)
covariance factors and one centered quadratic Gaussian factor. Its largest
\(L_2\)-fluctuation has, up to constants depending only on \(p\), the scale
\(\|\Lambda\|_{\mathsf F}^{k-1}\|\Lambda\|\), and the higher Gaussian
chaoses are no larger. For the random-radius construction, by contrast, the
contraction against \(B^{\otimes k}\) remains random after centering and has
scale \(\|\Lambda\|_{\mathsf F}^{k}\). When \(p=2k+1\), a complete pairing
is impossible. The lowest Gaussian chaos has order one, while the analogous
contraction for the random-radius block also retains one linear factor. The
largest \(L_2\)-scale of this factor is \(\|\Lambda\|^{1/2}\), giving
\(\|\Lambda\|_{\mathsf F}^{k}\|\Lambda\|^{1/2}\) in both cases. Thus the
quadratic-fluctuation gap visible at order two persists at even orders,
whereas at odd orders the unavoidable linear factor gives the same covariance
scale.

\section{Polynomial moments under Gaussian convex domination}\label{sec:proof2}

This section proves our second main result, Theorem~\ref{thm:main2}, together
with the two propositions used in its proof. Theorem~\ref{thm:main2} is proved in
Subsection~\ref{subsec:proof2}, and
Propositions~\ref{prop:Hilbert-Gaussian-tensor-Lq}
and~\ref{prop:ULC-moment-tensors} in
Subsections~\ref{subsec:Hilbert-Gaussian-tensor-Lq}
and~\ref{subsec:ULC}, respectively.

\subsection{Proof of Theorem~\ref{thm:main2}}\label{subsec:proof2}

The proof follows the argument outlined in Subsection~\ref{subsec:main2}.
We first reduce to the case where \(Z\) is finitely supported with a
strict slack in the convex domination. We then apply
\cite[Proposition~3.3]{hua2026talagrand} to obtain a structured martingale
coupling of \(Z\) and \(G_\Sigma\), whose remainder \(G_\Sigma-Z\) has
uniformly log-concave conditional law, so that
Proposition~\ref{prop:ULC-moment-tensors} bounds the conditional moment
tensors. Next, we derive the conditional expansion and control its
Gaussian contribution via
Proposition~\ref{prop:Hilbert-Gaussian-tensor-Lq}. Finally, we bound the
lower-degree contributions using the induction hypothesis together with
these bounds, which closes the induction on the degree.

\begin{proof}[Proof of Theorem~\ref{thm:main2}]

\medskip
\noindent\textbf{Step 1: Reduction to finite support and strict slack.}
\medskip

Set $E:=\operatorname{Ran}(\Sigma)$.
For every \(v\in\ker(\Sigma)\), convex domination applied to
\(x\mapsto|\langle v,x\rangle|\) gives
\[
\E|\langle v,Z\rangle|
\le
\E|\langle v,G_\Sigma\rangle|
=
0.
\]
Thus \(Z\in E\) almost surely. Restricting \(\Sigma\) and every input leg
of \(T\) to \(E\) does not change \(T(Z,\ldots,Z)\) or
\(\Psi_p(q,\Sigma)\), and cannot increase \(\|T\|_{\mathsf F}\). Moreover, since $E=\ker(\Sigma)^\perp$,
the restriction of \(\Sigma\) to \(E\) is positive definite. After
identifying \(E\) with \(\mathbb R^{\operatorname{rank}(\Sigma)}\), we
may therefore assume that \(\Sigma\) is positive definite.

Let \(G\sim\mathcal N(0,I_d)\). Since convex order is preserved under
linear maps, $\Sigma^{-1/2}Z
\preceq_{\mathrm{cx}}
G$. Applying \cite[Lemma~3.1]{hua2026talagrand} to
\(\Sigma^{-1/2}Z\), we obtain centered finitely supported random vectors
\(Y_n\) such that $Y_n\preceq_{\mathrm{cx}}G$
and \(Y_n\) converges in distribution to \(\Sigma^{-1/2}Z\). Define $Z_n:=\Sigma^{1/2}Y_n.$
It follows that
\[
Z_n\preceq_{\mathrm{cx}}G_\Sigma
\qquad\text{and}\qquad
Z_n\Longrightarrow Z.
\]
Choose \(\varepsilon_n\downarrow0\) with \(0<\varepsilon_n<1\), and set
\(\widetilde Z_n:=(1-\varepsilon_n)Z_n\). Since convex order is preserved
under linear maps,
\[
\widetilde Z_n\preceq_{\mathrm{cx}}(1-\varepsilon_n)G_\Sigma
\qquad\text{and}\qquad
\widetilde Z_n\Longrightarrow Z.
\]

It therefore suffices to prove the estimate, with a constant \(C_p\)
depending only on \(p\) and in particular independent of \(\varepsilon\)
and of the distribution of \(W\), for every centered finitely supported
random vector \(W\) satisfying
\(W\preceq_{\mathrm{cx}}(1-\varepsilon)G_\Sigma\) with
\(0<\varepsilon<1\). Indeed, applying this estimate to
\(W=\widetilde Z_n\), and using that \(x\mapsto T(x,\ldots,x)\) is
continuous together with the lower semicontinuity of nonnegative moments
under convergence in distribution, we obtain
\[
\E\big\|T(Z,\ldots,Z)\big\|_{\mathcal H}^q
\le
\liminf_{n\to\infty}
\E\big\|T(\widetilde Z_n,\ldots,\widetilde Z_n)\big\|_{\mathcal H}^q
\le
C_p^q\Psi_p(q,\Sigma)^q\|T\|_{\mathsf F}^q.
\]
Consequently, it suffices to prove the result when
\begin{equation}
Z \text{ has finite support},
\qquad
Z\preceq_{\mathrm{cx}}(1-\varepsilon)G_\Sigma
\text{ for some } \varepsilon\in(0,1).
\label{eq:Lq-strict-slack}
\end{equation}

\medskip
\noindent\textbf{Step 2: Structured martingale coupling and conditional moments.}
\medskip

We now work under the additional assumption
\eqref{eq:Lq-strict-slack}. Write
\[
\supp(Z)=\{z_1,\ldots,z_s\},
\qquad
\mathbb P(Z=z_i)=w_i>0.
\]
By the strict-slack part of \cite[Proposition~3.3]{hua2026talagrand},
\(Z\) and \(G_\Sigma\) admit a martingale coupling whose
conditional densities are exponential tilts: the law of \(G_\Sigma\) given \(Z=z_i\) has density \(f_i\)
with respect to \(\gamma_\Sigma:=\mcN(0,\Sigma)\) given by
\[
f_i(y):=
\frac{\exp\bigl(a_i+\langle b_i,y\rangle\bigr)}{D(y)}, \qquad
D(y):=
\sum_{j=1}^s
w_j\exp\bigl(a_j+\langle b_j,y\rangle\bigr),
\]
 for suitable \(a_1,\ldots,a_s\in\mathbb R\) and
\(b_1,\ldots,b_s\in\mathbb R^d\).  Moreover,
\begin{align}\label{eq:theorem2_proof_aux1}
\int_{\mathbb R^d}f_i(y)\,\gamma_\Sigma(dy)=1,
\qquad
\int_{\mathbb R^d}y f_i(y)\,\gamma_\Sigma(dy)=z_i,
\end{align}
the second identity expressing the martingale property
\(\E[G_\Sigma\mid Z]=Z\). Set
\[
R:=G_\Sigma-Z,
\qquad
M_r(z_i):=\E[R^{\otimes r}\mid Z=z_i],\ \ r\ge 1.
\]
In particular, the second identity in \eqref{eq:theorem2_proof_aux1} gives
\(M_1(z_i)=0\).

Conditionally on \(Z=z_i\), the random vector \(R\) has density
proportional to
\[
y\longmapsto
\exp\bigl(-\mathcal V_i(y)\bigr),
\]
where
\[
\mathcal V_i(y)
:=
\frac12
\left\langle
y+z_i,\Sigma^{-1}(y+z_i)
\right\rangle
-\log f_i(y+z_i).
\]
Since \(f_i\) is a strictly positive finite softmax function,
\(\mathcal V_i\in C^\infty(\mathbb R^d)\). Furthermore, defining
\[
\alpha_j(y)
:=
\frac{
w_j\exp\bigl(a_j+\langle b_j,y\rangle\bigr)
}{
D(y)
},
\qquad
\overline b(y):=
\sum_{j=1}^s\alpha_j(y)b_j,
\]
we have
\begin{align*}
\nabla^2\mathcal V_i(y)
&=
\Sigma^{-1}
+
\nabla^2\log D(y+z_i)\\
&=
\Sigma^{-1}
+
\sum_{j=1}^s
\alpha_j(y+z_i)
\bigl(b_j-\overline b(y+z_i)\bigr)
\bigl(b_j-\overline b(y+z_i)\bigr)^\top\\
&\succeq
\Sigma^{-1}.
\end{align*}
Thus the conditional law of \(R\) given \(Z=z_i\) satisfies the assumptions
of Proposition~\ref{prop:ULC-moment-tensors}, which gives
\begin{equation}
\|M_r(z_i)\|_{\mathsf F}
\le
U_r\rho_r(\Sigma),
\qquad 1\le i\le s,
\label{eq:Lq-conditional-Mr}
\end{equation}
where \(U_r\) is a constant depending only on \(r\), and \(\rho_r(\Sigma)\)
is as in the proposition.

\medskip
\noindent\textbf{Step 3: Conditional expansion and the Gaussian term.}
\medskip

In what follows, let \(e_1,\ldots,e_d\) denote the standard basis vectors of \(\mathbb R^d\). For \(0\le r\le p\) and
\(B\in(\mathbb R^d)^{\otimes r}\), write
\[
B
=
\sum_{i_1,\ldots,i_r=1}^d
B_{i_1,\ldots,i_r}
e_{i_1}\otimes\cdots\otimes e_{i_r},
\]
and define
\[
T[Z^{p-r},B]
:=
\sum_{i_1,\ldots,i_r=1}^d
B_{i_1,\ldots,i_r}
T(
\underbrace{Z,\ldots,Z}_{p-r\text{ times}},
e_{i_1},\ldots,e_{i_r}
).
\]
When \(r=p\), the \(Z\)-arguments are omitted; when $r=0$, $T[Z^p]
=
T(Z,\ldots,Z)$.

Since symmetrizing \(T\) does not change \(T(Z,\ldots,Z)\) and cannot
increase \(\|T\|_{\mathsf F}\), we may assume without loss of generality
that \(T\) is symmetric.

We prove the desired estimate simultaneously for all \(q\ge1\) and all
finite-dimensional target Hilbert spaces by induction on \(p\). For
\(p=1\), the function
\[
x\longmapsto\|T(x)\|_{\mathcal H}^q
\]
is convex. Hence the assumed convex domination and
Proposition~\ref{prop:Hilbert-Gaussian-tensor-Lq} give
\[
\|T(Z)\|_{L_q(\mathcal H)}
\le
\|T(G_\Sigma)\|_{L_q(\mathcal H)}
\lesssim
\Psi_1(q,\Sigma)\|T\|_{\mathsf F}.
\]

Let \(p\ge2\), and suppose that the assertion has been proved through
degree \(p-1\). Expanding \(T(G_\Sigma,\ldots,G_\Sigma)=T(Z+R,\ldots,Z+R)\)
by multilinearity produces, for each \(0\le r\le p\), the \(\binom pr\)
terms in which \(r\) arguments equal \(R\) and the remaining \(p-r\) equal
\(Z\). Since \(T\) is symmetric, all terms with the same \(r\) coincide,
and therefore
\[
T(G_\Sigma,\ldots,G_\Sigma)
=
\sum_{r=0}^p
\binom pr
T[Z^{p-r},R^{\otimes r}].
\]

Taking conditional expectation given \(Z\), we obtain
\begin{align*}
\E\left[
T[Z^{p-r},R^{\otimes r}]
\,\middle|\,
Z
\right]
=
T\left[
Z^{p-r},
\E[R^{\otimes r}\mid Z]
\right]=
T[Z^{p-r},M_r(Z)].
\end{align*}
The term corresponding to \(r=0\) is \(T[Z^p]\), while the term
corresponding to \(r=1\) vanishes because $M_1(Z)
=
\E[R\mid Z]
=
0$.
Consequently,
\begin{align*}
\E\left[
T(G_\Sigma,\ldots,G_\Sigma)
\,\middle|\,
Z
\right]
=
T[Z^p]
+
\sum_{r=2}^p
\binom pr
T[Z^{p-r},M_r(Z)].
\end{align*}
Rearranging this identity, we obtain
\begin{align}
T[Z^p]
=
\E\left[
T(G_\Sigma,\ldots,G_\Sigma)
\,\middle|\,
Z
\right]
-
\sum_{r=2}^p
\binom pr
T[Z^{p-r},M_r(Z)].
\label{eq:Lq-conditional-expansion}
\end{align}

We next control $\E\left[
T(G_\Sigma,\ldots,G_\Sigma)
\,\middle|\,
Z
\right]$. Since
\(y\mapsto\|y\|_{\mathcal H}^q\) is convex for \(q\ge1\), conditional
Jensen's inequality gives
\begin{align*}
\E
\left\|
\E\left[
T(G_\Sigma,\ldots,G_\Sigma)
\,\middle|\,
Z
\right]
\right\|_{\mathcal H}^q\le
\E
\left[
\E\left[
\big\|
T(G_\Sigma,\ldots,G_\Sigma)
\big\|_{\mathcal H}^q
\,\middle|\,
Z
\right]
\right]=
\E
\big\|
T(G_\Sigma,\ldots,G_\Sigma)
\big\|_{\mathcal H}^q.
\end{align*}
Taking \(q\)-th roots and applying Proposition~\ref{prop:Hilbert-Gaussian-tensor-Lq} yield
\begin{align}
\left\|
\E\left[
T(G_\Sigma,\ldots,G_\Sigma)
\,\middle|\,
Z
\right]
\right\|_{L_q(\mathcal H)}
\le
\left\|
T(G_\Sigma,\ldots,G_\Sigma)
\right\|_{L_q(\mathcal H)}
\le g_p
\Psi_p(q,\Sigma)
\|T\|_{\mathsf F},
\label{eq:Lq-conditional-Gaussian}
\end{align}
where \(g_p\) depends only on \(p\).

\medskip
\noindent\textbf{Step 4: Lower-order terms and completion of the induction.}
\medskip

For \(2\le r\le p-1\), we equip $\mathcal H\otimes(\mathbb R^d)^{\otimes r}$ with its natural Hilbert-space structure. More precisely, if
\[
U
=
\sum_{i_1,\ldots,i_r=1}^d
h_{i_1,\ldots,i_r}
\otimes
e_{i_1}\otimes\cdots\otimes e_{i_r},
\qquad
h_{i_1,\ldots,i_r}\in\mathcal H,
\]
then
\[
\|U\|_{\mathcal H\otimes(\mathbb R^d)^{\otimes r}}^2
:=
\sum_{i_1,\ldots,i_r=1}^d
\|h_{i_1,\ldots,i_r}\|_{\mathcal H}^2.
\]

Define the \((p-r)\)-linear map
\[
\mathcal S_rT:
(\mathbb R^d)^{p-r}
\longrightarrow
\mathcal H\otimes(\mathbb R^d)^{\otimes r}
\]
by
\begin{align*}
(\mathcal S_rT)(x_1,\ldots,x_{p-r})
:=
\sum_{i_1,\ldots,i_r=1}^d
T(x_1,\ldots,x_{p-r},e_{i_1},\ldots,e_{i_r})\otimes
e_{i_1}\otimes\cdots\otimes e_{i_r}.
\end{align*}
By the definition of the Frobenius norm of a multilinear map,
\begin{align*}
\|\mathcal S_rT\|_{\mathsf F}^2
&=
\sum_{j_1,\ldots,j_{p-r}=1}^d
\big\|
(\mathcal S_rT)
(e_{j_1},\ldots,e_{j_{p-r}})
\big\|_{\mathcal H\otimes(\mathbb R^d)^{\otimes r}}^2\\
&=
\sum_{\substack{
j_1,\ldots,j_{p-r}=1\\
i_1,\ldots,i_r=1
}}^d
\big\|
T(
e_{j_1},\ldots,e_{j_{p-r}},
e_{i_1},\ldots,e_{i_r}
)
\big\|_{\mathcal H}^2=
\|T\|_{\mathsf F}^2.
\end{align*}
Therefore,
\[
\|\mathcal S_rT\|_{\mathsf F}
=
\|T\|_{\mathsf F}.
\]

We next relate \(\mathcal S_rT\) to the contraction appearing in
\eqref{eq:Lq-conditional-expansion}. By definition,
\begin{align*}
T[Z^{p-r},M_r(Z)]
=
\sum_{i_1,\ldots,i_r=1}^d
(M_r(Z))_{i_1,\ldots,i_r}\,
T(
\underbrace{Z,\ldots,Z}_{p-r\text{ times}},
e_{i_1},\ldots,e_{i_r}
).
\end{align*}
Hence, by the triangle inequality followed by the Cauchy--Schwarz
inequality,
\begin{align*}
\big\|
T[Z^{p-r},M_r(Z)]
\big\|_{\mathcal H}
&\le
\sum_{i_1,\ldots,i_r=1}^d
\big|
(M_r(Z))_{i_1,\ldots,i_r}
\big|\times
\big\|
T(
Z,\ldots,Z,
e_{i_1},\ldots,e_{i_r}
)
\big\|_{\mathcal H}\\
&\le
\bigg(
\sum_{i_1,\ldots,i_r=1}^d
(M_r(Z))_{i_1,\ldots,i_r}^2
\bigg)^{1/2}
\bigg(
\sum_{i_1,\ldots,i_r=1}^d
\big\|
T(
Z,\ldots,Z,
e_{i_1},\ldots,e_{i_r}
)
\big\|_{\mathcal H}^2
\bigg)^{1/2}\\
&=
\|M_r(Z)\|_{\mathsf F}
\big\|
(\mathcal S_rT)(Z,\ldots,Z)
\big\|_{\mathcal H\otimes(\mathbb R^d)^{\otimes r}}.
\end{align*}
Thus, pointwise,
\[
\big\|
T[Z^{p-r},M_r(Z)]
\big\|_{\mathcal H}
\le
\big\|
(\mathcal S_rT)(Z,\ldots,Z)
\big\|_{\mathcal H\otimes(\mathbb R^d)^{\otimes r}}
\|M_r(Z)\|_{\mathsf F}.
\]
By \eqref{eq:Lq-conditional-Mr},
\[
\|M_r(Z)\|_{\mathsf F}
\le
U_r\rho_r(\Sigma)
\]
almost surely. Therefore, taking \(L_q\)-norms in the preceding
pointwise inequality gives
\begin{align}
\big\|
T[Z^{p-r},M_r(Z)]
\big\|_{L_q(\mathcal H)}
\le
U_r\rho_r(\Sigma)
\big\|
(\mathcal S_rT)(Z,\ldots,Z)
\big\|_{L_q(
\mathcal H\otimes(\mathbb R^d)^{\otimes r}
)}.
\label{eq:Lq-correction-first-bound}
\end{align}

Let \(Q_k\) denote the constant in the induction hypothesis at degree
\(k\). More precisely, the induction hypothesis states that, for every
finite-dimensional Hilbert space \(\mathcal K\) and every \(k\)-linear
map
\[
S:(\mathbb R^d)^k\to\mathcal K,
\]
one has
\[
\big\|
S(Z,\ldots,Z)
\big\|_{L_q(\mathcal K)}
\le
Q_k
\Psi_k(q,\Sigma)
\|S\|_{\mathsf F}.
\]
The space $\mathcal H\otimes(\mathbb R^d)^{\otimes r}$ is a finite-dimensional Hilbert space, and \(\mathcal S_rT\) is an
\((p-r)\)-linear map taking values in this space. We may therefore apply
the induction hypothesis at degree \(p-r\) to \(\mathcal S_rT\). This
gives
\begin{align*}
\big\|
(\mathcal S_rT)(Z,\ldots,Z)
\big\|_{L_q(
\mathcal H\otimes(\mathbb R^d)^{\otimes r}
)}
\le
Q_{p-r}
\Psi_{p-r}(q,\Sigma)
\|\mathcal S_rT\|_{\mathsf F}=
Q_{p-r}
\Psi_{p-r}(q,\Sigma)
\|T\|_{\mathsf F}.
\end{align*}
Substituting this estimate into
\eqref{eq:Lq-correction-first-bound}, we obtain
\begin{align}
\big\|
T[Z^{p-r},M_r(Z)]
\big\|_{L_q(\mathcal H)}
\le
U_rQ_{p-r}
\rho_r(\Sigma)
\Psi_{p-r}(q,\Sigma)
\|T\|_{\mathsf F}.
\label{eq:Lq-correction-before-scales}
\end{align}

It remains to compare the two covariance scales. We claim that
\begin{equation}
\rho_r(\Sigma)
\Psi_{p-r}(q,\Sigma)
\le
\Psi_p(q,\Sigma),
\qquad
2\le r\le p-1.
\label{eq:rho-Psi-product}
\end{equation}
Suppose first that \(r\) is even. Then $\rho_r(\Sigma)
=
\|\Sigma\|_{\mathsf F}^{r/2}$,
and hence
\begin{align*}
\rho_r(\Sigma)\Psi_{p-r}(q,\Sigma)
&=
\sum_{\substack{
0\le j\le p-r\\
j\equiv p-r\!\!\!\!\pmod 2
}}
\|\Sigma\|_{\mathsf F}^{r/2}
\|\Sigma\|_{\mathsf F}^{(p-r-j)/2}
\|\Sigma\|^{j/2}q^{j/2}\\
&=
\sum_{\substack{
0\le j\le p-r\\
j\equiv p\!\!\!\!\pmod 2
}}
\|\Sigma\|_{\mathsf F}^{(p-j)/2}
\|\Sigma\|^{j/2}q^{j/2}\le
\Psi_p(q,\Sigma).
\end{align*}
Now suppose that \(r\) is odd. Then $\rho_r(\Sigma)
=
\|\Sigma\|_{\mathsf F}^{(r-1)/2}
\|\Sigma\|^{1/2}$, and
\begin{align*}
\rho_r(\Sigma)\Psi_{p-r}(q,\Sigma)
&=
\sum_{\substack{
0\le j\le p-r\\
j\equiv p-r\!\!\!\!\pmod 2
}}
\|\Sigma\|_{\mathsf F}^{(p-j-1)/2}
\|\Sigma\|^{(j+1)/2}q^{j/2}\\
&\le
\sum_{\substack{
0\le j\le p-r\\
j\equiv p-r\!\!\!\!\pmod 2
}}
\|\Sigma\|_{\mathsf F}^{(p-j-1)/2}
\|\Sigma\|^{(j+1)/2}q^{(j+1)/2}\le
\Psi_p(q,\Sigma),
\end{align*}
where the first inequality uses \(q\ge1\). For every index \(j\) in
the last sum, \(j+1\equiv p\pmod 2\), so its summand is one of the
summands defining \(\Psi_p(q,\Sigma)\). This proves
\eqref{eq:rho-Psi-product}.

Combining
\eqref{eq:Lq-correction-before-scales} and
\eqref{eq:rho-Psi-product}, we conclude that
\[
\big\|
T[Z^{p-r},M_r(Z)]
\big\|_{L_q(\mathcal H)}
\le
U_rQ_{p-r}
\Psi_p(q,\Sigma)
\|T\|_{\mathsf F},
\qquad
2\le r\le p-1.
\]

The term corresponding to \(r=p\) is estimated directly. By
\eqref{eq:Lq-conditional-Mr},
\begin{align*}
\big\|
T[M_p(Z)]
\big\|_{L_q(\mathcal H)}
\le
\|T\|_{\mathsf F}
\big\|
\|M_p(Z)\|_{\mathsf F}
\big\|_{L_q}\le
U_p\rho_p(\Sigma)\|T\|_{\mathsf F}\le
U_p\Psi_p(q,\Sigma)\|T\|_{\mathsf F}.
\end{align*}

Using the representation of \(T[Z^p]\) in \eqref{eq:Lq-conditional-expansion}, and combining
the preceding estimates with
\eqref{eq:Lq-conditional-Gaussian}, gives
\begin{align*}
\|T[Z^p]\|_{L_q(\mathcal H)}
\le
\bigg(
g_p
+
U_p
+
\sum_{r=2}^{p-1}
\binom pr
U_rQ_{p-r}
\bigg)
\Psi_p(q,\Sigma)
\|T\|_{\mathsf F}.
\end{align*}
Thus the induction closes with $Q_1:=g_1$
and, for \(p\ge2\),
\[
Q_p
:=
g_p
+
U_p
+
\sum_{r=2}^{p-1}
\binom pr
U_rQ_{p-r},
\]
where the sum is understood to be empty when \(p=2\). In particular,
\(Q_p\) depends only on \(p\) and is independent of \(q\), the dimension $d$,
\(\Sigma\), the support of \(Z\), and the strict-slack parameter
\(\varepsilon\).

This proves the desired estimate in the finite-support strict-slack
case. The approximation argument in \textbf{Step~1} completes the proof in full generality.
\end{proof}

\subsection{Polynomial moments for Gaussian vectors}\label{subsec:Hilbert-Gaussian-tensor-Lq}

Proposition~\ref{prop:Hilbert-Gaussian-tensor-Lq} bounds the \(L_q\)-moments
of a Hilbert-valued polynomial evaluated at a Gaussian vector, in both
uncentered and centered form. It is the Gaussian case of
Theorem~\ref{thm:main2}, and enters its proof through the Gaussian
contribution in \eqref{eq:Lq-conditional-expansion}; the centered form is what
gives the sharper Gaussian scale in
Lemma~\ref{lem:subgaussian-directional-tensor-moments}. We first decompose \(T(G_\Sigma,\ldots,G_\Sigma)\) into Gaussian chaos
components, then bound the moments of each component separately, and
finally sum the resulting estimates, which are exactly the summands
defining \(\Psi_p(q,\Sigma)\) and \(\Psi_p^\circ(q,\Sigma)\).

\begin{proof}[Proof of Proposition~\ref{prop:Hilbert-Gaussian-tensor-Lq}]
By orthogonal invariance, we may assume that
\[
\Sigma
=
\operatorname{diag}(\lambda_1,\ldots,\lambda_d),
\qquad
G_\Sigma
=
\left(
\lambda_1^{1/2}\xi_1,\ldots,\lambda_d^{1/2}\xi_d
\right),
\]
where \(\xi_1,\ldots,\xi_d\) are independent standard Gaussian random
variables; indeed, an orthogonal change of coordinates can be absorbed
into \(T\) without changing its Frobenius norm. In these coordinates,
\(\|\Sigma\|=\max_{1\le i\le d}\lambda_i\) and
\(\|\Sigma\|_{\mathsf F}^2=\sum_{i=1}^d\lambda_i^2\). Since symmetrizing \(T\) does not change
\(T(G_\Sigma,\ldots,G_\Sigma)\) and cannot increase
\(\|T\|_{\mathsf F}\), we may assume without loss of generality
that \(T\) is symmetric. Writing
\(T_{i_1,\ldots,i_p}:=T(e_{i_1},\ldots,e_{i_p})\in\mathcal H\), we have
\begin{align}
T(G_\Sigma,\ldots,G_\Sigma)
=
\sum_{i_1,\ldots,i_p=1}^d
\lambda_{i_1}^{1/2}\cdots\lambda_{i_p}^{1/2}
T_{i_1,\ldots,i_p}
\xi_{i_1}\cdots\xi_{i_p}.
\label{eq:Gaussian-coordinate-expansion}
\end{align}

\medskip
\noindent\textbf{Step 1: Chaos decomposition.}
\medskip

Let \(H_r\) denote the probabilists' Hermite polynomial of degree \(r\).
For an ordered \(j\)-tuple
\(\mathbf k=(k_1,\ldots,k_j)\in\{1,\ldots,d\}^j\), define
\[
\nu_\ell(\mathbf k)
:=
\bigl|\{r:k_r=\ell\}\bigr|,
\qquad
H_{\mathbf k}(\xi)
:=
\prod_{\ell=1}^d
H_{\nu_\ell(\mathbf k)}(\xi_\ell),
\]
with the convention \(H_{\varnothing}=1\). Since
\(\sum_{\ell=1}^d\nu_\ell(\mathbf k)=j\), the multivariate Hermite
polynomial \(H_{\mathbf k}(\xi)\) has total Hermite degree \(j\) and hence
belongs to the \(j\)-th homogeneous Gaussian chaos. When \(j\ge1\), it is
also centered. Indeed, at least one of the multiplicities
\(\nu_\ell(\mathbf k)\) is positive, so independence, together with
\(\E H_m(\xi_\ell)=0\) for \(m\ge1\), yields
\(\E H_{\mathbf k}(\xi)=0\).

For \(0\le a\le\lfloor p/2\rfloor\), let \(\mathcal P_a(p)\) be the
collection of all choices of \(a\) disjoint pairs of positions in
\(\{1,\ldots,p\}\), and let \(V(\pi)\) denote the set of positions covered
by \(\pi\in\mathcal P_a(p)\). The Wick expansion in coordinates reads
\begin{align}
\xi_{i_1}\cdots\xi_{i_p}
=
\sum_{a=0}^{\lfloor p/2\rfloor}
\sum_{\pi\in\mathcal P_a(p)}
\bigg(
\prod_{\{r,s\}\in\pi}
\mathbf 1_{\{i_r=i_s\}}
\bigg)
H_{(i_t)_{t\notin V(\pi)}}(\xi),
\label{eq:coordinate-Wick-expansion}
\end{align}
where each pair identifies two coordinate indices, while the remaining
\(p-2a\) indices form a Hermite polynomial of total degree \(p-2a\).

Fix \(a\) and set \(j:=p-2a\). Substituting
\eqref{eq:coordinate-Wick-expansion} into
\eqref{eq:Gaussian-coordinate-expansion}, the pairs contract \(T\) against
\(a\) copies of \(\Sigma\), which leads us to define
\[
B^{(a)}_{k_1,\ldots,k_j}
:=
\sum_{r_1,\ldots,r_a=1}^d
\lambda_{r_1}\cdots\lambda_{r_a}
T_{r_1,r_1,\ldots,r_a,r_a,k_1,\ldots,k_j},
\]
so that \(B^{(0)}_{k_1,\ldots,k_p}=T_{k_1,\ldots,k_p}\). Since \(T\) is
symmetric, all pairings in \(\mathcal P_a(p)\) give the same contribution,
and \(|\mathcal P_a(p)|=p!/(2^aa!j!)\). Consequently, setting
\[
Y_j
:=
\frac{p!}{2^aa!j!}
\sum_{k_1,\ldots,k_j=1}^d
\lambda_{k_1}^{1/2}\cdots\lambda_{k_j}^{1/2}
B^{(a)}_{k_1,\ldots,k_j}
H_{(k_1,\ldots,k_j)}(\xi),
\]
we obtain the chaos decomposition
\begin{equation}
T(G_\Sigma,\ldots,G_\Sigma)
=
\sum_{\substack{0\le j\le p\\j\equiv p\!\!\!\pmod{2}}}
Y_j,
\label{eq:Gaussian-coordinate-chaos-decomposition}
\end{equation}
where each \(Y_j\) is an \(\mathcal H\)-valued random variable in the
\(j\)-th Gaussian chaos generated by \(\xi_1,\ldots,\xi_d\). More
precisely, for every \(u\in\mathcal H\), the scalar random variable
\(\langle Y_j,u\rangle_{\mathcal H}\) is a linear combination of
multivariate Hermite polynomials of total Hermite degree \(j\).

\medskip
\noindent\textbf{Step 2: Moments of the chaos components.}
\medskip

The Cauchy--Schwarz inequality gives
\begin{align*}
\sum_{k_1,\ldots,k_j=1}^d
\big\|B^{(a)}_{k_1,\ldots,k_j}\big\|_{\mathcal H}^2
&\le
\bigg(
\sum_{r_1,\ldots,r_a=1}^d
\lambda_{r_1}^2\cdots\lambda_{r_a}^2
\bigg)
\sum_{k_1,\ldots,k_j=1}^d
\sum_{r_1,\ldots,r_a=1}^d
\big\|
T_{r_1,r_1,\ldots,r_a,r_a,k_1,\ldots,k_j}
\big\|_{\mathcal H}^2
\\
&\le
\|\Sigma\|_{\mathsf F}^{2a}\|T\|_{\mathsf F}^2.
\end{align*}
The Hermite orthogonality relations read
\[
\E
\left[
H_{(k_1,\ldots,k_j)}(\xi)
H_{(\ell_1,\ldots,\ell_j)}(\xi)
\right]
=
\sum_{\sigma\in S_j}
\prod_{r=1}^j
\mathbf{1}_{\{k_r=\ell_{\sigma(r)}\}},
\]
where \(S_j\) denotes the set of all permutations of
\(\{1,\ldots,j\}\). Since
\(B^{(a)}_{k_1,\ldots,k_j}\) is symmetric in
\(k_1,\ldots,k_j\), expanding
\(\E\|Y_j\|_{\mathcal H}^2\) and applying these relations gives
\[
\left(\E\|Y_j\|_{\mathcal H}^2\right)^{1/2}
=
\frac{p!}{2^a a! j!}\sqrt{j!}
\bigg(
\sum_{k_1,\ldots,k_j=1}^d
\lambda_{k_1}\cdots\lambda_{k_j}
\big\|B^{(a)}_{k_1,\ldots,k_j}\big\|_{\mathcal H}^2
\bigg)^{1/2}
\lesssim_p
\|\Sigma\|_{\mathsf F}^{a}
\|\Sigma\|^{j/2}
\|T\|_{\mathsf F}.
\]
Here we used the bound on \(B^{(a)}\) established above and
\(\lambda_k\le\|\Sigma\|\). The remaining prefactor is absorbed into
the implicit constant since \(a\le p/2\) and \(j\le p\).

Now let \(q\ge2\), set \(D:=\dim(\mathcal H)\), and let
\(u_1,\ldots,u_D\) be an orthonormal basis of \(\mathcal H\). Each scalar projection
\(\langle Y_j,u_r\rangle_{\mathcal H}\) belongs to the \(j\)-th homogeneous
Gaussian chaos. Therefore, Gaussian hypercontractivity
\cite[Theorem~5.10]{Janson1997GaussianHilbertSpaces} gives $\left\|
\langle Y_j,u_r\rangle_{\mathcal H}
\right\|_{L_q}
\le
(q-1)^{j/2}
\left\|
\langle Y_j,u_r\rangle_{\mathcal H}
\right\|_{L_2}$. Hence, by the triangle
inequality in \(L_{q/2}\),
\begin{align*}
\left(\E\|Y_j\|_{\mathcal H}^q\right)^{2/q}
&=
\bigg\|
\sum_{r=1}^D\big|\langle Y_j,u_r\rangle_{\mathcal H}\big|^2
\bigg\|_{L_{q/2}}
\le
\sum_{r=1}^D
\big\|\langle Y_j,u_r\rangle_{\mathcal H}\big\|_{L_q}^2
\le
(q-1)^j\,\E\|Y_j\|_{\mathcal H}^2,
\end{align*}
and combining the two displays with \(a=(p-j)/2\) gives
\begin{equation}
\left(\E\|Y_j\|_{\mathcal H}^q\right)^{1/q}
\lesssim_p
\|\Sigma\|_{\mathsf F}^{(p-j)/2}
\|\Sigma\|^{j/2}q^{j/2}
\|T\|_{\mathsf F}.
\label{eq:Gaussian-coordinate-chaos-Lq}
\end{equation}

\medskip
\noindent\textbf{Step 3: Completion of the proof.}
\medskip

Applying the triangle inequality to
\eqref{eq:Gaussian-coordinate-chaos-decomposition} and using
\eqref{eq:Gaussian-coordinate-chaos-Lq}, we obtain
\[
\left(\E\big\|T(G_\Sigma,\ldots,G_\Sigma)\big\|_{\mathcal H}^q\right)^{1/q}
\lesssim_p
\sum_{\substack{0\le j\le p\\j\equiv p\!\!\!\pmod{2}}}
\|\Sigma\|_{\mathsf F}^{(p-j)/2}
\|\Sigma\|^{j/2}q^{j/2}
\|T\|_{\mathsf F}
=
\Psi_p(q,\Sigma)\|T\|_{\mathsf F},
\]
which is the first estimate for \(q\ge2\). Every \(Y_j\) with \(j\ge1\) is
centered, and \(Y_0\), which is present only when \(p\) is even, is
deterministic; hence \(Y_0=\E T(G_\Sigma,\ldots,G_\Sigma)\) and
\[
T(G_\Sigma,\ldots,G_\Sigma)
-
\E T(G_\Sigma,\ldots,G_\Sigma)
=
\sum_{\substack{1\le j\le p\\j\equiv p\!\!\!\pmod{2}}}
Y_j.
\]
Repeating the preceding estimate over this restricted range gives the
second estimate for \(q\ge2\), with \(\Psi_p(q,\Sigma)\) replaced by
\(\Psi_p^\circ(q,\Sigma)\).

Finally, for \(1\le q<2\) both estimates follow from the case \(q=2\) by
monotonicity of \(L_q\)-norms, since
\(\Psi_p(2,\Sigma)\le2^{p/2}\Psi_p(q,\Sigma)\) and
\(\Psi_p^\circ(2,\Sigma)\le2^{p/2}\Psi_p^\circ(q,\Sigma)\) for \(q\ge1\).
\end{proof}

\subsection{Moment tensors of uniformly log-concave distributions}\label{subsec:ULC}

Proposition~\ref{prop:ULC-moment-tensors} bounds the Frobenius norm of the
moment tensors of a uniformly log-concave distribution by the
parity-dependent scale \(\rho_p(\Sigma)\); applied conditionally, it
controls the conditional moment tensors in
\eqref{eq:Lq-conditional-expansion}. We prove a stronger Hilbert-valued
\(L_2\) estimate for polynomials \(T(Y,\ldots,Y)\), from which the
proposition follows directly. The argument uses a bounded Stein kernel for
the uniformly log-concave random vector \(Y\), obtained from Fathi's result
\cite{fathi2019stein} after whitening. Applying the Stein
identity to the gradient of the squared norm of the polynomial yields a
recursion in the degree, which closes by induction.



We first recall the definition of Stein kernel; see \cite[Section 1]{fathi2019stein}.
\begin{definition}[Stein kernel]\label{def:stein}
Let \(Y\in\mathbb R^d\) be centered with
\(\E\|Y\|_2^2<\infty\). A measurable map
\[
\tau:\mathbb R^d\longrightarrow\mathbb R^{d\times d}
\]
with $\E\|\tau(Y)\|_{\mathsf F}^2<\infty$
is called a \emph{Stein kernel} for \(Y\) if
\[
\E\langle Y,\varphi(Y)\rangle
=
\E\left\langle
\tau(Y),\nabla\varphi(Y)
\right\rangle_{\mathsf F}
\]
for every smooth vector field
\(\varphi:\mathbb R^d\to\mathbb R^d\) satisfying $\E\left[
\|\varphi(Y)\|_2^2
+
\|\nabla\varphi(Y)\|_{\mathsf F}^2
\right]
<\infty$,
where
\[
\nabla\varphi
=
\left(
\frac{\partial\varphi_i}{\partial y_j}
\right)_{i,j=1}^d.
\]
\end{definition}

\begin{proof}[Proof of Proposition~\ref{prop:ULC-moment-tensors}]

\medskip
\noindent\textbf{Step 1: A bounded Stein kernel for $Y$.}
\medskip

We first construct a bounded Stein kernel for \(Y\). Set
\[
\widetilde Y:=\Sigma^{-1/2}Y.
\]
The density of \(\widetilde Y\) is proportional to
\(\exp(-\widetilde V)\), where
\[
\widetilde V(y):=V(\Sigma^{1/2}y).
\]
Moreover,
\begin{equation*}
\nabla^2\widetilde V(y)
=
\Sigma^{1/2}
\nabla^2V(\Sigma^{1/2}y)
\Sigma^{1/2}
\succeq
I_d.
\end{equation*}
Hence Fathi's bounded Stein-kernel result
\cite[Corollary~2.4]{fathi2019stein} yields a measurable symmetric
positive semi-definite Stein kernel \(\widetilde\tau\) for
\(\widetilde Y\) such that
\begin{equation}
0
\preceq
\widetilde\tau(\widetilde Y)
\preceq
I_d
\label{eq:ULC-whitened-Stein-order}
\end{equation}
almost surely.

Define
\begin{equation}
\tau(y)
:=
\Sigma^{1/2}
\widetilde\tau(\Sigma^{-1/2}y)
\Sigma^{1/2}.
\label{eq:ULC-Stein-transform}
\end{equation}
Then \(\tau\) is a Stein kernel for \(Y\). Indeed, let \(\varphi:\mathbb R^d\to\mathbb R^d\) be a smooth vector
field satisfying the integrability condition in Definition \ref{def:stein},
and define
\[
\psi(y)
:=
\Sigma^{1/2}\varphi(\Sigma^{1/2}y).
\]
Since the change of variables is linear, \(\psi\) satisfies the
corresponding integrability condition under the law of
\(\widetilde Y\).
Since
\[
\nabla\psi(y)
=
\Sigma^{1/2}
\nabla\varphi(\Sigma^{1/2}y)
\Sigma^{1/2},
\]
the Stein identity for \(\widetilde Y\) gives
\begin{align*}
\E\langle Y,\varphi(Y)\rangle
&=
\E
\left\langle
\widetilde Y,
\Sigma^{1/2}\varphi(\Sigma^{1/2}\widetilde Y)
\right\rangle\\
&=
\E
\left\langle
\widetilde\tau(\widetilde Y),
\Sigma^{1/2}
\nabla\varphi(\Sigma^{1/2}\widetilde Y)
\Sigma^{1/2}
\right\rangle_{\mathsf F}\\
&=
\E
\left\langle
\tau(Y),\nabla\varphi(Y)
\right\rangle_{\mathsf F}.
\end{align*}
Thus,
\begin{equation}
\E\langle Y,\varphi(Y)\rangle
=
\E
\left\langle
\tau(Y),\nabla\varphi(Y)
\right\rangle_{\mathsf F}.
\label{eq:ULC-Stein}
\end{equation}
Furthermore, \eqref{eq:ULC-whitened-Stein-order} and
\eqref{eq:ULC-Stein-transform} imply
\begin{equation}
0
\preceq
\tau(Y)
\preceq
\Sigma
\label{eq:ULC-Stein-order}
\end{equation}
almost surely. Consequently, almost surely,
\begin{equation}
\|\tau(Y)\|
\le
\|\Sigma\|,
\qquad
\|\tau(Y)\|_{\mathsf F}
\le
\|\Sigma\|_{\mathsf F}.
\label{eq:ULC-Stein-norms}
\end{equation}

Finally, the uniform convexity assumption implies that \(Y\) has moments
of every order. Indeed,
\[
V(y)
\ge
V(0)+\langle\nabla V(0),y\rangle
+\frac12\langle y,\Sigma^{-1}y\rangle,
\]
so the density of \(Y\) is bounded by a constant multiple of a shifted
Gaussian density. Hence every polynomial vector field, together with its
Jacobian, satisfies the integrability condition in Definition \ref{def:stein}. Therefore, \eqref{eq:ULC-Stein} applies to all polynomial
vector fields used below.

\medskip
\noindent\textbf{Step 2: A Hilbert-valued \(L_2\) estimate.}
\medskip

We prove the following stronger result. Set $\rho_0(\Sigma):=1$.
For every integer \(p\ge0\), every finite-dimensional real Hilbert space
\(\mathcal H\), and every \(p\)-linear map
\[
T:(\mathbb R^d)^p\longrightarrow\mathcal H,
\]
one has
\begin{equation}\label{eq:ULC-Hilbert-bound}
\left(
\E
\big\|
T(Y,\ldots,Y)
\big\|_{\mathcal H}^2
\right)^{1/2}
\le
C_p
\rho_p(\Sigma)
\|T\|_{\mathsf F},
\end{equation}
where \(C_p\) depends only on \(p\). For \(p=0\), a \(0\)-linear map is understood as a fixed vector in
\(\mathcal H\), and its Frobenius norm is its Hilbert-space norm.

Since symmetrizing \(T\) does not change \(T(Y,\ldots,Y)\) and cannot
increase \(\|T\|_{\mathsf F}\), we may assume without loss of generality
that \(T\) is symmetric. The case \(p=0\) is immediate with \(C_0=1\). For \(p=1\), applying
\eqref{eq:ULC-Stein} to linear vector fields gives
\[
\Cov(Y)
=
\E\tau(Y).
\]
Therefore, \eqref{eq:ULC-Stein-order} implies
\[
\Cov(Y)
\preceq
\Sigma.
\]
For every linear map \(T:\mathbb R^d\to\mathcal H\), it follows that
\[
\E\|T(Y)\|_{\mathcal H}^2
=
\tr
\left(
T\Cov(Y)T^*
\right)
\le \tr(T\Sigma T^*) \le
\|\Sigma\|
\tr(TT^*)
=
\|\Sigma\|\|T\|_{\mathsf F}^2.
\]
Since $\rho_1(\Sigma)=\|\Sigma\|^{1/2}$, then \eqref{eq:ULC-Hilbert-bound} holds for \(p=1\) with \(C_1=1\).

Fix \(p\ge2\), and suppose that
\eqref{eq:ULC-Hilbert-bound} holds through order \(p-1\) for every
finite-dimensional Hilbert space. Set
\[
P_T(y):=T(y,\ldots,y),
\qquad
f_T(y):=\|P_T(y)\|_{\mathcal H}^2,
\]
and write 
\[
m_p
:=
\left(
\E\|P_T(Y)\|_{\mathcal H}^2
\right)^{1/2}.
\]
Since \(f_T\) is homogeneous of degree \(2p\), Euler's identity gives
\[
\langle y,\nabla f_T(y)\rangle
=
2p f_T(y).
\]
Therefore,
\[
\E\langle Y,\nabla f_T(Y)\rangle
=
2p\,\E f_T(Y)
=
2p m_p^2.
\]
Applying the Stein identity \eqref{eq:ULC-Stein} to the vector field $\varphi(y):=\nabla f_T(y)$, whose Jacobian is $\nabla\varphi(y)=\nabla^2f_T(y)$,
we obtain
\[
\E\langle Y,\nabla f_T(Y)\rangle
=
\E
\left\langle
\tau(Y),\nabla^2f_T(Y)
\right\rangle_{\mathsf F}.
\]
Combining the two identities yields
\begin{equation}
2p m_p^2
=
\E
\left\langle
\tau(Y),\nabla^2f_T(Y)
\right\rangle_{\mathsf F}.
\label{eq:ULC-Stein-polynomial}
\end{equation}

A direct differentiation gives
\begin{align}
\frac{\partial^2f_T}{\partial y_i\partial y_j}(y)
=
2p^2
\left\langle
T(y,\ldots,y,e_i),
T(y,\ldots,y,e_j)
\right\rangle_{\mathcal H}
+
2p(p-1)
\left\langle
P_T(y),
T(y,\ldots,y,e_i,e_j)
\right\rangle_{\mathcal H}.
\label{eq:ULC-Hessian}
\end{align}
Substituting \eqref{eq:ULC-Hessian} into
\eqref{eq:ULC-Stein-polynomial} and dividing by \(2p\), we obtain
\begin{align}\label{eq:ULC_aux1}
m_p^2
&=
\underbrace{p
\E
\sum_{i,j=1}^d
\tau_{ij}(Y)
\left\langle
T(Y,\ldots,Y,e_i),
T(Y,\ldots,Y,e_j)
\right\rangle_{\mathcal H}}_{=:\circnum{1}}\nonumber\\
&\quad +
\underbrace{(p-1)
\E
\left\langle
P_T(Y),
\sum_{i,j=1}^d
\tau_{ij}(Y)
T(Y,\ldots,Y,e_i,e_j)
\right\rangle_{\mathcal H}}_{=:\circnum{2}}.
\end{align}

Define the symmetric \((p-1)\)-linear map $\mathcal D_1T:
(\mathbb R^d)^{p-1}
\longrightarrow
\mathcal H^d$
by
\[
(\mathcal D_1T)(y_1,\ldots,y_{p-1})
:=
\left(
T(y_1,\ldots,y_{p-1},e_j)
\right)_{j=1}^d,
\]
where \(\mathcal H^d\) is equipped with the product Hilbert-space norm
\[
\|(h_j)_{j=1}^d\|_{\mathcal H^d}^2
:=
\sum_{j=1}^d\|h_j\|_{\mathcal H}^2.
\]
Similarly, define the symmetric \((p-2)\)-linear map $\mathcal D_2T:
(\mathbb R^d)^{p-2}
\longrightarrow
\mathcal H^{d\times d}$
by
\[
(\mathcal D_2T)(y_1,\ldots,y_{p-2})
:=
\left(
T(y_1,\ldots,y_{p-2},e_i,e_j)
\right)_{i,j=1}^d,
\]
where
\[
\|(h_{ij})_{i,j=1}^d\|_{\mathcal H^{d\times d}}^2
:=
\sum_{i,j=1}^d\|h_{ij}\|_{\mathcal H}^2.
\]
By the definition of the Frobenius norm of a multilinear map,
\begin{align*}
\|\mathcal D_1T\|_{\mathsf F}^2
&=
\sum_{i_1,\ldots,i_{p-1}=1}^d
\big\|
(\mathcal D_1T)(e_{i_1},\ldots,e_{i_{p-1}})
\big\|_{\mathcal H^d}^2\\
&=
\sum_{i_1,\ldots,i_{p-1},j=1}^d
\big\|
T(e_{i_1},\ldots,e_{i_{p-1}},e_j)
\big\|_{\mathcal H}^2
=
\|T\|_{\mathsf F}^2.
\end{align*}
Similarly,
\begin{align*}
\|\mathcal D_2T\|_{\mathsf F}^2
&=
\sum_{i_1,\ldots,i_{p-2}=1}^d
\big\|
(\mathcal D_2T)(e_{i_1},\ldots,e_{i_{p-2}})
\big\|_{\mathcal H^{d\times d}}^2\\
&=
\sum_{i_1,\ldots,i_{p-2},i,j=1}^d
\big\|
T(e_{i_1},\ldots,e_{i_{p-2}},e_i,e_j)
\big\|_{\mathcal H}^2
=
\|T\|_{\mathsf F}^2.
\end{align*}
Consequently,
\[
\|\mathcal D_1T\|_{\mathsf F}
=
\|\mathcal D_2T\|_{\mathsf F}
=
\|T\|_{\mathsf F}.
\]
The induction hypothesis therefore gives
\begin{align}
\bigg(
\E
\sum_{j=1}^d
\big\|
T(Y,\ldots,Y,e_j)
\big\|_{\mathcal H}^2
\bigg)^{1/2}
&\le
C_{p-1}
\rho_{p-1}(\Sigma)
\|T\|_{\mathsf F},
\label{eq:ULC-first-derivative}\\
\bigg(
\E
\sum_{i,j=1}^d
\big\|
T(Y,\ldots,Y,e_i,e_j)
\big\|_{\mathcal H}^2
\bigg)^{1/2}
&\le
C_{p-2}
\rho_{p-2}(\Sigma)
\|T\|_{\mathsf F}.
\label{eq:ULC-second-derivative}
\end{align}

For every fixed \(y\in\mathbb R^d\), the matrix
\[
\left(
\left\langle
T(y,\ldots,y,e_i),
T(y,\ldots,y,e_j)
\right\rangle_{\mathcal H}
\right)_{i,j=1}^d
\]
is positive semi-definite. Hence
\eqref{eq:ULC-Stein-order} implies
\begin{align}
\sum_{i,j=1}^d
\tau_{ij}(Y)
\left\langle
T(Y,\ldots,Y,e_i),
T(Y,\ldots,Y,e_j)
\right\rangle_{\mathcal H}
\le
\|\Sigma\|
\sum_{j=1}^d
\big\|
T(Y,\ldots,Y,e_j)
\big\|_{\mathcal H}^2.
\label{eq:ULC-first-Hessian-bound}
\end{align}
By \eqref{eq:ULC-first-Hessian-bound} and
\eqref{eq:ULC-first-derivative}, the term $\circnum{1}$ in \eqref{eq:ULC_aux1} is bounded by
\begin{align*}
\circnum{1}
\le
p\|\Sigma\|
\E
\sum_{j=1}^d
\big\|
T(Y,\ldots,Y,e_j)
\big\|_{\mathcal H}^2\le
pC_{p-1}^2
\|\Sigma\|
\rho_{p-1}(\Sigma)^2
\|T\|_{\mathsf F}^2.
\end{align*}

Moreover, the Cauchy--Schwarz inequality and
\eqref{eq:ULC-Stein-norms} give
\begin{align}
\bigg\|
\sum_{i,j=1}^d
\tau_{ij}(Y)
T(Y,\ldots,Y,e_i,e_j)
\bigg\|_{\mathcal H}
&\le
\|\tau(Y)\|_{\mathsf F}
\bigg(
\sum_{i,j=1}^d
\big\|
T(Y,\ldots,Y,e_i,e_j)
\big\|_{\mathcal H}^2
\bigg)^{1/2}
\nonumber\\
&\le
\|\Sigma\|_{\mathsf F}
\bigg(
\sum_{i,j=1}^d
\big\|
T(Y,\ldots,Y,e_i,e_j)
\big\|_{\mathcal H}^2
\bigg)^{1/2}.
\label{eq:ULC-second-Hessian-bound}
\end{align}
Applying the Cauchy--Schwarz inequality and using \eqref{eq:ULC-second-Hessian-bound}, we obtain
\begin{align*}
\circnum{2}&\le (p-1)\left|
\E
\left\langle
P_T(Y),
\sum_{i,j=1}^d
\tau_{ij}(Y)
T(Y,\ldots,Y,e_i,e_j)
\right\rangle_{\mathcal H}
\right|\\
&\le (p-1)
\|\Sigma\|_{\mathsf F}
\E
\left[
\|P_T(Y)\|_{\mathcal H}
\bigg(
\sum_{i,j=1}^d
\big\|
T(Y,\ldots,Y,e_i,e_j)
\big\|_{\mathcal H}^2
\bigg)^{1/2}
\right]\\
&\le (p-1)
\|\Sigma\|_{\mathsf F}
\left(
\E\|P_T(Y)\|_{\mathcal H}^2
\right)^{1/2}
\bigg(
\E
\sum_{i,j=1}^d
\big\|
T(Y,\ldots,Y,e_i,e_j)
\big\|_{\mathcal H}^2
\bigg)^{1/2}\\
&\le (p-1)
C_{p-2}
\|\Sigma\|_{\mathsf F}
\rho_{p-2}(\Sigma)
m_p
\|T\|_{\mathsf F},
\end{align*}
where the last step uses \eqref{eq:ULC-second-derivative} and the definition of \(m_p\). 

Combining the estimates of $\circnum{1}$, $\circnum{2}$, and \eqref{eq:ULC_aux1} yields
\begin{align}
m_p^2
\le
pC_{p-1}^2
\|\Sigma\|
\rho_{p-1}(\Sigma)^2
\|T\|_{\mathsf F}^2
+
(p-1)C_{p-2}
\|\Sigma\|_{\mathsf F}
\rho_{p-2}(\Sigma)
m_p
\|T\|_{\mathsf F}.
\label{eq:ULC-moment-recursion}
\end{align}
The definition of \(\rho_p(\Sigma)\), together with
\(\|\Sigma\|\le\|\Sigma\|_{\mathsf F}\), implies
\[
\|\Sigma\|^{1/2}\rho_{p-1}(\Sigma)
\le
\rho_p(\Sigma),\qquad
\|\Sigma\|_{\mathsf F}\rho_{p-2}(\Sigma)
=
\rho_p(\Sigma).
\]
Thus \eqref{eq:ULC-moment-recursion} yields
\[
m_p^2
\le
pC_{p-1}^2
\rho_p(\Sigma)^2
\|T\|_{\mathsf F}^2
+
(p-1)C_{p-2}
\rho_p(\Sigma)
m_p
\|T\|_{\mathsf F}.
\]
Since $t^2\le a^2+bt
$ implies $t\le a+b$ for $t,a,b\ge 0$,
we conclude that
\[
m_p
\le
\left(
\sqrt p\,C_{p-1}
+
(p-1)C_{p-2}
\right)
\rho_p(\Sigma)
\|T\|_{\mathsf F}.
\]
The induction therefore closes upon setting
\[
C_p
:=
\sqrt p\,C_{p-1}
+
(p-1)C_{p-2}.
\]

\medskip
\noindent\textbf{Step 3: Completion of the proof.}
\medskip

By Frobenius duality, Cauchy--Schwarz, and \eqref{eq:ULC-Hilbert-bound} in \textbf{Step~2},
\begin{align*}
\bigl\|\E Y^{\otimes p}\bigr\|_{\mathsf F}
=
\sup_{\|A\|_{\mathsf F}=1}
\left|
\E\left\langle A,Y^{\otimes p}\right\rangle_{\mathsf F}
\right| \le
\sup_{\|A\|_{\mathsf F}=1}
\left(
\E\left|
\left\langle A,Y^{\otimes p}\right\rangle_{\mathsf F}
\right|^2
\right)^{1/2}
\lesssim_p \rho_p(\Sigma).
\end{align*}
This completes the proof.
\end{proof}

\section{Auxiliary proofs}\label{app:proofs}

\subsection{Proof of Lemma \ref{lem:Hilbert-moment-reduction}}\label{subsec:Hilbert-moment-reduction}

\begin{proof}[Proof of Lemma \ref{lem:Hilbert-moment-reduction}]
Let
\[
S:=\sum_{i=1}^N Z_i,\qquad S':=\sum_{i=1}^N Z_i',
\]
where
\(Z_1',\ldots,Z_N'\) is an independent copy of
\(Z_1,\ldots,Z_N\). Expanding the squared Hilbert-space norm gives
\[
\|S\|_{\mathcal H}^2
=
\sum_{i=1}^N\|Z_i\|_{\mathcal H}^2
+
\sum_{i\ne j}
\langle Z_i,Z_j\rangle_{\mathcal H}.
\]
Therefore,
\begin{align*}
\|S\|_{L_q(\mathcal H)}^2 = \left\|\|S\|_{\mathcal{H}}^2\right\|_{L_{q/2}}
&\le
\bigg\|
\sum_{i=1}^N\|Z_i\|_{\mathcal H}^2
\bigg\|_{L_{q/2}}
+
\bigg\|
\sum_{i\ne j}
\langle Z_i,Z_j\rangle_{\mathcal H}
\bigg\|_{L_{q/2}}\\
&=
\left\|
\bigg(
\sum_{i=1}^N\|Z_i\|_{\mathcal H}^2
\bigg)^{1/2}
\right\|_{L_q}^2
+
\bigg\|
\sum_{i\ne j}
\langle Z_i,Z_j\rangle_{\mathcal H}
\bigg\|_{L_{q/2}}.
\end{align*}
Applying the Hilbert-space decoupling inequality
\cite[Exercise~6.1.4]{vershynin2018high} with
\(a_{ij}=1\) for all \(i,j\) and the convex function
\(F(t)=|t|^{q/2}\), and then taking both sides to the power \(2/q\),
we obtain
\[
\bigg\|
\sum_{i\ne j}
\langle Z_i,Z_j\rangle_{\mathcal H}
\bigg\|_{L_{q/2}}
\le
4\,
\bigg\|
\sum_{i,j=1}^N
\langle Z_i,Z_j'\rangle_{\mathcal H}
\bigg\|_{L_{q/2}}
=
4\,\|\langle S,S'\rangle_{\mathcal H}\|_{L_{q/2}}.
\]
On the event \(\{S\ne0\}\), conditioning on \(S\) and using the
independence of \(S\) and \(S'\), we have
\begin{align*}
\left(
\E\Big[
|\langle S,S'\rangle_{\mathcal H}|^{q/2}
\,\big| \,
S
\Big]
\right)^{2/q}
&=
\|S\|_{\mathcal H}
\left(
\E\left[
\left|
\bigg\langle
\frac{S}{\|S\|_{\mathcal H}},S'
\bigg\rangle_{\mathcal H}
\right|^{q/2}
\,\middle|\,
S
\right]
\right)^{2/q}\\
&\le
\|S\|_{\mathcal H}
\sup_{\|h\|_{\mathcal H}=1}
\|\langle h,S'\rangle_{\mathcal H}\|_{L_{q/2}}.
\end{align*}
The same inequality is trivial on \(\{S=0\}\). Consequently,
\begin{align*}
\|\langle S,S'\rangle_{\mathcal H}\|_{L_{q/2}}
&\le
\sup_{\|h\|_{\mathcal H}=1}
\bigg\|
\sum_{i=1}^N
\langle h,Z_i'\rangle_{\mathcal H}
\bigg\|_{L_{q/2}}
\|S\|_{L_{q/2}(\mathcal H)}\\
&=
\sup_{\|h\|_{\mathcal H}=1}
\bigg\|
\sum_{i=1}^N
\langle h,Z_i\rangle_{\mathcal H}
\bigg\|_{L_{q/2}}
\|S\|_{L_{q/2}(\mathcal H)}\\
&\le
\sup_{\|h\|_{\mathcal H}=1}
\bigg\|
\sum_{i=1}^N
\langle h,Z_i\rangle_{\mathcal H}
\bigg\|_{L_{q/2}}
\|S\|_{L_q(\mathcal H)}.
\end{align*}
It follows that
\begin{align*}
\|S\|_{L_q(\mathcal H)}^2
\le
\left\|
\bigg(
\sum_{i=1}^N \|Z_i\|_{\mathcal H}^2
\bigg)^{1/2}
\right\|_{L_q}^2+
4
\sup_{\|h\|_{\mathcal H}=1}
\left\|
\sum_{i=1}^N
\langle h,Z_i\rangle_{\mathcal H}
\right\|_{L_{q/2}}
\|S\|_{L_q(\mathcal H)}.
\end{align*}
Since \(t^2\le a^2+4bt\) implies \(t\le a+4b\) for $t,a,b\ge 0$, the upper bound
follows.

For the reverse inequality, let
\(\varepsilon_1,\ldots,\varepsilon_N\) be independent Rademacher random
variables, independent of everything else. Since
\[
\E\left[
(Z_i-Z_i')_{i=1}^N
\,\middle|\,
Z_1,\ldots,Z_N
\right]
=
(Z_i)_{i=1}^N,
\]
conditional Jensen's inequality in \(\ell_2^N(\mathcal H)\) gives
\[
\left\|
\bigg(
\sum_{i=1}^N\|Z_i\|_{\mathcal H}^2
\bigg)^{1/2}
\right\|_{L_q}
\le
\left(
\E
\bigg(
\sum_{i=1}^N\|Z_i-Z_i'\|_{\mathcal H}^2
\bigg)^{q/2}
\right)^{1/q}.
\]
For every fixed realization of \((Z_i,Z_i')_{i=1}^N\), Rademacher
orthogonality and \(q\ge2\) imply
\begin{align*}
\bigg(
\sum_{i=1}^N\|Z_i-Z_i'\|_{\mathcal H}^2
\bigg)^{q/2}
=
\left(
\E_\varepsilon
\bigg\|
\sum_{i=1}^N\varepsilon_i(Z_i-Z_i')
\bigg\|_{\mathcal H}^2
\right)^{q/2}\le
\E_\varepsilon
\bigg\|
\sum_{i=1}^N\varepsilon_i(Z_i-Z_i')
\bigg\|_{\mathcal H}^q.
\end{align*}
Consequently,
\begin{align*}
\left\|
\bigg(
\sum_{i=1}^N\|Z_i\|_{\mathcal H}^2
\bigg)^{1/2}
\right\|_{L_q}
&\le
\left(
\E\,\E_\varepsilon
\bigg\|
\sum_{i=1}^N\varepsilon_i(Z_i-Z_i')
\bigg\|_{\mathcal H}^q
\right)^{1/q}=
\|S-S'\|_{L_q(\mathcal H)}
\le
2\|S\|_{L_q(\mathcal H)}.
\end{align*}
Here the equality follows because the random vectors
\(Z_i-Z_i'\) are independent and symmetric.

Moreover, for every \(h\in\mathcal H\) with \(\|h\|_{\mathcal H}=1\),
\[
\bigg\|
\sum_{i=1}^N\langle h,Z_i\rangle_{\mathcal H}
\bigg\|_{L_{q/2}}
=
\|\langle h,S\rangle_{\mathcal H}\|_{L_{q/2}}
\le
\|\langle h,S\rangle_{\mathcal H}\|_{L_q}
\le
\|S\|_{L_q(\mathcal H)}.
\]
Therefore,
\[
\left\|
\bigg(
\sum_{i=1}^N\|Z_i\|_{\mathcal H}^2
\bigg)^{1/2}
\right\|_{L_q}
+
\sup_{\|h\|_{\mathcal H}=1}
\bigg\|
\sum_{i=1}^N\langle h,Z_i\rangle_{\mathcal H}
\bigg\|_{L_{q/2}}
\le
3\|S\|_{L_q(\mathcal H)}.
\]
Together with the upper bound, this completes the proof.

\end{proof}

\subsection{Proof of Lemma~\ref{lem:subgaussian-directional-tensor-moments}}\label{subsec:subgaussian-directional-tensor-moments}

\begin{proof}[Proof of Lemma~\ref{lem:subgaussian-directional-tensor-moments}]

\medskip
\noindent\textbf{Sub-Gaussian upper bound.}
\medskip

Set \(E:=\operatorname{Ran}(\Sigma)\). Since \(\E\langle v,X\rangle^2=0\)
for \(v\in\ker(\Sigma)\), we have \(X\in E\) almost surely. Let
\(\Sigma_E\) denote the restriction of \(\Sigma\) to \(E\), and set
\(Y:=K^{-1}\Sigma_E^{-1/2}X\). By the definition of the sub-Gaussian
constant \(K\), after a universal rescaling \(Y\) satisfies the
\(1\)-sub-Gaussian tail convention used in the comparison theorem \cite[Theorem~1.1]{vanhandel2025subgaussian}. Consequently,
\(Y\preceq_{\mathrm{cx}}CG_E\) with \(G_E\sim\mcN(0,I_E)\) for a
universal constant \(C>0\), and since convex order is preserved under
linear maps, applying
\(C^{-1}\Sigma_E^{1/2}\) to both sides gives
\[
(CK)^{-1}X\preceq_{\mathrm{cx}}\Sigma_E^{1/2}G_E=G_\Sigma.
\]

For \(A\in(\mathbb R^d)^{\otimes p}\), define the scalar-valued
\(p\)-linear map
\(T_A(x_1,\ldots,x_p):=\langle A,x_1\otimes\cdots\otimes x_p\rangle_{\mathsf F}\),
whose Frobenius norm is \(\|T_A\|_{\mathsf F}=\|A\|_{\mathsf F}\).
Applying Theorem~\ref{thm:main2} to \((CK)^{-1}X\) and \(T_A\), and using
the homogeneity of \(T_A\), gives
\begin{align*}
\left\|
\left\langle A,X^{\otimes p}\right\rangle_{\mathsf F}
\right\|_{L_q}
=
(CK)^p
\left\|
T_A\bigl((CK)^{-1}X,\ldots,(CK)^{-1}X\bigr)
\right\|_{L_q}
\lesssim_p
K^p\Psi_p(q,\Sigma)\|A\|_{\mathsf F}.
\end{align*}
Since \(\|V-\E V\|_{L_q}\le2\|V\|_{L_q}\) for every scalar
\(V\in L_q\), it follows that
\begin{align*}
\sup_{\|A\|_{\mathsf F}=1}
\left\|
\left\langle A,X^{\otimes p}-\E X^{\otimes p}\right\rangle_{\mathsf F}
\right\|_{L_q}
&\lesssim_p
K^p\Psi_p(q,\Sigma)\\
&\asymp_p K^p
\begin{cases}
\|\Sigma\|_{\mathsf F}^{p/2}+\|\Sigma\|^{p/2}q^{p/2},
& p \text{ even},\\[3pt]
\|\Sigma\|_{\mathsf F}^{(p-1)/2}\|\Sigma\|^{1/2}\sqrt{q}+\|\Sigma\|^{p/2}q^{p/2},
& p \text{ odd},
\end{cases}
\end{align*}
where the final comparison follows since the terms defining
\(\Psi_p(q,\Sigma)\) form a geometric progression.

 \medskip
\noindent\textbf{Gaussian upper bounds.}
\medskip

Now let \(X\sim\mcN(0,\Sigma)\). Applying 
Proposition~\ref{prop:Hilbert-Gaussian-tensor-Lq} with
\(\mathcal H=\mathbb R\) and \(T=T_A\), we obtain
\[
\left\|
\left\langle A,X^{\otimes p}\right\rangle_{\mathsf F}
\right\|_{L_q}
\lesssim_p
\Psi_p(q,\Sigma)\|A\|_{\mathsf F},
\]
and
\[
\left\|
\left\langle A,X^{\otimes p}-\E X^{\otimes p}\right\rangle_{\mathsf F}
\right\|_{L_q}
\lesssim_p
\Psi_p^\circ(q,\Sigma)\|A\|_{\mathsf F},
\]
where 
\[
\Psi_p(q,\Sigma)
=
\sum_{\substack{0\le j\le p\\ j\equiv p\;(\mathrm{mod}\,2)}}
\|\Sigma\|_{\mathsf F}^{(p-j)/2}\|\Sigma\|^{j/2}q^{j/2}\asymp_p
\begin{cases}
\|\Sigma\|_{\mathsf F}^{p/2}
+
\|\Sigma\|^{p/2}q^{p/2},
& p \text{ even},\\[3pt]
\|\Sigma\|_{\mathsf F}^{(p-1)/2}\|\Sigma\|^{1/2}\sqrt q
+
\|\Sigma\|^{p/2}q^{p/2},
& p \text{ odd},
\end{cases}
\]
and
\[
\Psi_p^\circ(q,\Sigma)
=
\sum_{\substack{1\le j\le p\\ j\equiv p\;(\mathrm{mod}\,2)}}
\|\Sigma\|_{\mathsf F}^{(p-j)/2}\|\Sigma\|^{j/2}q^{j/2}\asymp_p
\begin{cases}
\|\Sigma\|_{\mathsf F}^{p/2-1}\|\Sigma\|q
+
\|\Sigma\|^{p/2}q^{p/2},
& p \text{ even},\\[3pt]
\|\Sigma\|_{\mathsf F}^{(p-1)/2}\|\Sigma\|^{1/2}\sqrt q
+
\|\Sigma\|^{p/2}q^{p/2},
& p \text{ odd}.
\end{cases}
\]
Taking the supremum over \(\|A\|_{\mathsf F}=1\) proves the desired
upper bounds.

\medskip
\noindent\textbf{Gaussian lower bounds.} 
\medskip

By orthogonal invariance, assume that
\[
\Sigma=\operatorname{diag}(\lambda_1,\ldots,\lambda_d),
\qquad
\lambda_1=\|\Sigma\|,
\qquad
X=\big(\lambda_1^{1/2}\xi_1,\ldots, \lambda_d^{1/2}\xi_d\big),
\]
where \(\xi_1,\ldots,\xi_d\) are independent standard Gaussian random
variables. 

 We first prove the lower bound for the centered version. 

Let \(v=e_1\). Testing against \(A=v^{\otimes p}\) gives
\[
\left\langle
A,X^{\otimes p}-\E X^{\otimes p}
\right\rangle_{\mathsf F}
=
\|\Sigma\|^{p/2}
\left(
\xi_1^p-\E\xi_1^p
\right).
\]
Since
\[
\left\|
\xi_1^p-\E\xi_1^p
\right\|_{L_q}
\asymp_p
q^{p/2},
\qquad q\ge1,
\]
we obtain
\begin{equation}\label{eq:gaussian-directional-highest-chaos-lower}
\sup_{\|A\|_{\mathsf F}=1}
\left\|
\left\langle
A,X^{\otimes p}-\E X^{\otimes p}
\right\rangle_{\mathsf F}
\right\|_{L_q}
\gtrsim_p
\|\Sigma\|^{p/2}q^{p/2}.
\end{equation}

Set
\[
\sigma_{\perp}
:=
\bigg(
\sum_{j=2}^d\lambda_j^2
\bigg)^{1/2}.
\]
Suppose first that $\sigma_{\perp}
\ge
\frac12\|\Sigma\|_{\mathsf F}$.
Define
\[
B_{\perp}
:=
\frac{\operatorname{diag}(0,\lambda_2,\ldots,\lambda_d)}
{\sigma_{\perp}},
\qquad
Q_{\perp}:=X^\top B_{\perp}X.
\]
Then
\[
\|B_{\perp}\|_{\mathsf F}=1,
\qquad
Q_{\perp}
=
\frac1{\sigma_{\perp}}
\sum_{j=2}^d\lambda_j^2\xi_j^2,
\qquad
\E Q_{\perp}=\sigma_{\perp},
\]
and \(Q_{\perp}\) is independent of \(\xi_1\).

Suppose that \(p=2k\) is even. Take $A:=(v\otimes v)\otimes B_{\perp}^{\otimes(k-1)}$,
with the convention \(B_{\perp}^{\otimes0}=1\). Then
\(\|A\|_{\mathsf F}=1\), and
\[
\left\langle
A,X^{\otimes p}
\right\rangle_{\mathsf F}= \langle v,X\rangle^2 (X^{\top} B_{\perp} X)^{k-1}
=
\|\Sigma\|\xi_1^2Q_{\perp}^{k-1}.
\]
By contractivity of conditional expectation in \(L_q\) and Jensen's inequality,
\begin{align}\label{eq:last_lemma_aux1}
\left\|
\left\langle
A,X^{\otimes p}-\E X^{\otimes p}
\right\rangle_{\mathsf F}
\right\|_{L_q}
&\ge
\left\|
\E\left[
\left\langle
A,X^{\otimes p}-\E X^{\otimes p}
\right\rangle_{\mathsf F}
\,\middle|\,
\xi_1
\right]
\right\|_{L_q}\nonumber \\
&=
\|\Sigma\|
\left(\E Q_{\perp}^{k-1}\right)
\|\xi_1^2-1\|_{L_q}\nonumber\\
&\ge
\|\Sigma\|
(\E Q_{\perp})^{k-1}
\|\xi_1^2-1\|_{L_q}\nonumber\\
&= \|\Sigma\|
(\sigma_{\perp})^{k-1}
\|\xi_1^2-1\|_{L_q}\nonumber\\
&\gtrsim_p
\|\Sigma\|
\|\Sigma\|_{\mathsf F}^{k-1}q.
\end{align}

Suppose instead that \(p=2k+1\) is odd. Take $A:=v\otimes B_{\perp}^{\otimes k}$. Then \(\|A\|_{\mathsf F}=1\), and
\[
\left\langle
A,X^{\otimes p}
\right\rangle_{\mathsf F}= \left\langle v,X\right\rangle (X^{\top} B_{\perp} X)^k
=
\|\Sigma\|^{1/2}\xi_1Q_{\perp}^k.
\]
This random variable is centered. By independence, Jensen's inequality,
and the standard Gaussian moment estimate, we have
\begin{align}\label{eq:last_lemma_aux2}
\left\|
\left\langle
A,X^{\otimes p}
\right\rangle_{\mathsf F}
\right\|_{L_q}
&=
\|\Sigma\|^{1/2}
\|\xi_1\|_{L_q}
\|Q_{\perp}\|_{L_{kq}}^k\nonumber\\
&\ge
\|\Sigma\|^{1/2}
\|\xi_1\|_{L_q}
(\E Q_{\perp})^k \nonumber\\
&= \|\Sigma\|^{1/2}
\|\xi_1\|_{L_q}
(\sigma_{\perp})^k  \nonumber\\
&\gtrsim_p
\|\Sigma\|^{1/2}
\|\Sigma\|_{\mathsf F}^k
\sqrt q.
\end{align}

Together with \eqref{eq:gaussian-directional-highest-chaos-lower}, the
bounds \eqref{eq:last_lemma_aux1} and \eqref{eq:last_lemma_aux2} prove the
lower bound when \(\sigma_{\perp}\ge\frac12\|\Sigma\|_{\mathsf F}\).

It remains to consider $\sigma_{\perp}
<
\frac12\|\Sigma\|_{\mathsf F}$.
Since $\|\Sigma\|_{\mathsf F}^2
=
\|\Sigma\|^2+\sigma_{\perp}^2$,
this gives $\|\Sigma\|_{\mathsf F}
\le
\frac{2}{\sqrt3}\|\Sigma\|$, and consequently
\[
\|\Sigma\|_{\mathsf F}^{p/2-1}\|\Sigma\|q
\lesssim_p\|\Sigma\|^{p/2}q^{p/2},
\qquad
\|\Sigma\|_{\mathsf F}^{(p-1)/2}\|\Sigma\|^{1/2}\sqrt q
\lesssim_p\|\Sigma\|^{p/2}q^{p/2}.
\]
Therefore \eqref{eq:gaussian-directional-highest-chaos-lower} already
dominates the intermediate scale in this case, which completes the proof
of the centered lower bound.

\medskip

We now turn to the non-centered version. Since
\(\|V-\E V\|_{L_q}\le2\|V\|_{L_q}\) for every scalar \(V\in L_q\), the
centered lower bound just established gives
\begin{equation}\label{eq:last_lemma_aux3}
\sup_{\|A\|_{\mathsf F}=1}
\left\|
\left\langle A,X^{\otimes p}\right\rangle_{\mathsf F}
\right\|_{L_q}
\gtrsim_p
\begin{cases}
\|\Sigma\|_{\mathsf F}^{p/2-1}\|\Sigma\|q
+
\|\Sigma\|^{p/2}q^{p/2},
& p \text{ even},\\[3pt]
\|\Sigma\|_{\mathsf F}^{(p-1)/2}\|\Sigma\|^{1/2}\sqrt q
+
\|\Sigma\|^{p/2}q^{p/2},
& p \text{ odd}.
\end{cases}
\end{equation}
When \(p\) is odd, this is the desired bound. Suppose then that
\(p=2k\) is even, and set
\[
B:=\frac{\Sigma}{\|\Sigma\|_{\mathsf F}},
\qquad
A:=B^{\otimes k},
\]
so that \(\|A\|_{\mathsf F}=\|B\|_{\mathsf F}^k=1\) and
\[
\left\langle A,X^{\otimes p}\right\rangle_{\mathsf F}
=
\left\langle B^{\otimes k},X^{\otimes 2k}\right\rangle_{\mathsf F}
=
\left(X^\top B X\right)^k
=
\bigg(
\frac1{\|\Sigma\|_{\mathsf F}}
\sum_{j=1}^d\lambda_j^2\xi_j^2
\bigg)^{k}
\ge0.
\]
Since \(X^\top B X\ge0\) with
\(\E X^\top B X=\|\Sigma\|_{\mathsf F}\), monotonicity of
\(L_q\)-norms and Jensen's inequality give
\begin{equation}\label{eq:last_lemma_aux4}
\left\|
\left\langle A,X^{\otimes p}\right\rangle_{\mathsf F}
\right\|_{L_q}
\ge
\E\left(X^\top B X\right)^k
\ge
\left(\E X^\top B X\right)^k
=
\|\Sigma\|_{\mathsf F}^{p/2}.
\end{equation}
Combining \eqref{eq:last_lemma_aux3} and \eqref{eq:last_lemma_aux4}, and
noting that
\(\|\Sigma\|_{\mathsf F}^{p/2-1}\|\Sigma\|q
\lesssim_p\|\Sigma\|_{\mathsf F}^{p/2}+\|\Sigma\|^{p/2}q^{p/2}\),
we obtain, for \(p\) even,
\[
\sup_{\|A\|_{\mathsf F}=1}
\left\|
\left\langle A,X^{\otimes p}\right\rangle_{\mathsf F}
\right\|_{L_q}
\gtrsim_p
\|\Sigma\|_{\mathsf F}^{p/2}+\|\Sigma\|^{p/2}q^{p/2}.
\]
This completes the proof.
\end{proof}

\section*{Acknowledgments}
The authors were partly funded by the NSF CAREER award DMS-2237628.

\bibliographystyle{siam} 
\bibliography{references}

\end{document}